\newcommand{\C}{{\mathbb C}}
\renewcommand{\P}{{\mathbb P}}
\newcommand{\B}{{\mathcal B}}
\newcommand{\K}{{\Bbbk}}   % field
\newcommand{\KK}{{\mathbb K}}   % a bigger field
\renewcommand{\AA}{{\mathbb A}}   % affine space
\newcommand{\A}{{\mathcal A}}  % arrangement
\newcommand{\T}{{\overline{T}}} % complete principal truncation
\newcommand{\set}[1]{\left\{#1\right\}}
\newcommand{\abs}[1]{\left|#1\right|}
\newcommand{\p}{\overline{p}}  % projective quotient map
\newcommand{\CC}{{\mathcal C}}  % circuits
\newcommand{\cl}[1]{{\rm cl}(#1)}  % matroid closure
\newcommand{\mathscr}{\mathcal}
\DeclareMathOperator{\Tor}{Tor}
\DeclareMathOperator{\OT}{R}  % Orlik-Terao ring (R = std notation)
\DeclareMathOperator{\M}{\mathsf M}
\DeclareMathOperator{\codim}{codim}
\DeclareMathOperator{\Proj}{Proj} % forming a projective scheme
\DeclareMathOperator{\Spec}{Spec} % forming a affine scheme
\DeclareMathOperator{\bc}{{\mathbf{bc}}} % broken circuit complex
\DeclareMathOperator{\In}{In}    % initial in term order
\DeclareMathOperator{\lead}{Lt}  % leading term
\DeclareMathOperator{\Fitt}{Fitt}
\newtheorem{prop}{Proposition}[section]
\newtheorem{thm}[prop]{Theorem}
\newtheorem{lem}[prop]{Lemma}
\newtheorem{cor}[prop]{Corollary}
\theoremstyle{definition}
\newtheorem{defn}[prop]{Definition}
\newtheorem{rem}[prop]{Remark}
\newtheorem{exm}[prop]{Example}
\newtheorem{que}[prop]{Question}
\newenvironment{example}%
{\pushQED{\qed}\begin{exm}}%
{\popQED\end{exm}}  % end example environment with a qed-symbol
\newenvironment{remark}%
{\pushQED{\qed}\begin{rem}}%
{\popQED\end{rem}}  % end example environment with a qed-symbol
\newenvironment{question}%
{\pushQED{\qed}\begin{que}}%
{\popQED\end{que}}  % end example environment with a qed-symbol
\begin{document}

\title[Modular decomposition of the Orlik-Terao algebra]%
{Modular decomposition of the Orlik-Terao algebra of a hyperplane arrangement}

\author{Graham Denham}
\address{Department of Mathematics\\ The University of Western Ontario\\ London, Ontario N6A 5B7\\}
\email{gdenham@uwo.ca}

\author{Mehdi Garrousian}
\address{Department of Mathematics\\ The University of Western Ontario\\ London, Ontario N6A 5B7\\}
\email{mgarrou@uwo.ca}

\author{\c Stefan O. Toh\v aneanu}
\address{Department of Mathematics\\ The University of Western Ontario\\ London, Ontario N6A 5B7\\}
\email{stohanea@uwo.ca}

\subjclass[2000]{Primary 52C35; Secondary 16S37, 13C40, 05B35, 13D40} 
\keywords{hyperplane arrangement, Orlik-Terao algebra, broken circuit complex, complete intersection, Koszul algebra}

\begin{abstract}
Let $\A$ be a collection of $n$ linear hyperplanes in $\K^\ell$, where $\K$ is an algebraically closed field.  The Orlik-Terao algebra of $\A$ is the subalgebra $\OT(\A)$ of the rational functions generated by reciprocals of linear forms vanishing on hyperplanes of $\A$.  It determines an irreducible subvariety $Y(\A)$ of
$\P^{n-1}$.  We show that a flat $X$ of $\A$ is modular if and only if
$\OT(\A)$ is a split extension of the Orlik-Terao algebra of the subarrangement $\A_X$.  This provides another
refinement of Stanley's modular factorization theorem~\cite{stan71}
and a new characterization of modularity, similar in spirit to the fibration theorem of \cite{Par00}.  

We deduce that if $\A$ is supersolvable, then its Orlik-Terao algebra
is Koszul.  In certain cases, the algebra is also a complete intersection, and we characterize when this happens.  
\end{abstract}

\maketitle
\setcounter{tocdepth}{1}
\tableofcontents

%%%%%%%%%%%%%%%%%%%
%%%%%%%%%%%%%%%%%%%

\section{Introduction}
We begin with a brief description of the main constructions appearing in our paper.
\subsection{Algebras of reciprocals, and reciprocal planes}
\label{ss:intro1}
Let $\K$ be an algebraically closed field, and let $\B_n=\set{\hat{H}_1,\ldots,\hat{H}_n}$ denote the set of coordinate hyperplanes
in $\K^n$.  Let $V$ be a linear subspace of $\K^n$ of dimension $\ell$, with the property that $V\not\subseteq \hat{H}_i$ for any $i$.
Consider the set of hyperplanes $\A=\set{H_1,\ldots,H_n}$ in $V$, where $H_i=\hat{H_i}\cap V$.  The set $\A$ is a central, essential hyperplane arrangement of rank $\ell$, and any such arrangement arises this way.  Our default reference for facts about hyperplane arrangements will be the book \cite{OTbook}.

Let $f\colon V\hookrightarrow\K^n$ denote the inclusion.  Then
$f_i\in \K[V]$ is a linear map
for which $H_i=\ker f_i$, for $1\leq i\leq n$.  Our main object of study in this paper is the following.
\begin{defn}\label{def:OTalgebra}
The Orlik-Terao algebra of $\A$ is the subalgebra of $\K(V)$ generated
by reciprocals of the linear polynomials defining the hyperplanes:
\[
 \OT(\A):=\K[1/f_1,\ldots,1/f_n].
\]
\end{defn}
This algebra and certain Artinian quotients of it first appeared in work of Orlik and Terao~\cite{ot94}, in the context of hypergeometric functions.

Let $M(\A)=V-\bigcup_i H_i$ denote the complement of the hyperplane arrangement $\A$.  Then $M(\A)=V\cap (\K^*)^n$, where $(\K^*)^n$ is both an algebraic torus and the complement of the arrangement $\B_n$.
Let $i\colon \K^n\to\K^n$ denote the Cremona transformation given
by $i(y)=(y_1^{-1},\ldots,y_n^{-1})$.  Clearly the restriction of
$i$ to the torus $(\K^*)^n$ is a regular map, and $M(\A)\cong i(M(\A))$.
By construction, the Orlik-Terao algebra is the coordinate
ring of the closure
\[
Y(\A):=\overline{i(M(\A))}
\]
in $\K^n$.  We will
call the variety $Y(\A)$ the {\em reciprocal plane} of $\A$,
following the terminology of \cite{SSV11}, where it plays a role
in understanding the entropic discriminant.

Various authors have studied the Orlik-Terao algebra and reciprocal plane: see, for example, \cite{PS06,st,Loo03,HT03}.  The construction has received renewed attention recently in \cite{HK11,SSV11}.
A number of basic properties are known: for example, the Cohen-Macaulay property~\cite{PS06}, explicit equations for $Y(\A)$ (see \S\ref{ss:OTintro}), and the Hilbert series
of $\OT(\A)$, given in terms of matroid combinatorics:
\begin{equation}\label{eq:Terao_theorem}
h(\OT(\A),t)=\pi(\A,t/(1-t)),
\end{equation}
where $\pi(\A,t)$ is the Poincar\'e polynomial of the arrangement.
This was proven by Terao~\cite{tera02} for $\K$ of characteristic zero, and by Berget~\cite{berget} in general.
\subsection{Modular factorizations}
Let $L(\A)$ denote the intersection lattice of an arrangement $\A$.
A subspace $X\in L(\A)$ is said to be {\em modular} if
if $X+Y\in L(\A)$ for all subspaces $Y\in L(\A)$.
If $X\in L(\A)$ is modular, then Stanley's Factorization Theorem~\cite{stan71} states that the Poincar\'e polynomial $\pi(\A,t)$
is divisible by $\pi(\A_X,t)$.  In fact, a result of Brylawski~\cite{Bry75} accounts for the quotient: he shows that,
if $X$ is modular, then
\begin{equation}\label{eq:pi_factors}
\pi(\A,t)=\pi(\A_X,t)\pi(\T_X(\M(\A)),t)/(1+t),
\end{equation}
where $\T_X(\M(\A))$ denotes the complete principal truncation of the matroid $\M(\A)$ of $\A$ at $X$. (We refer to \cite{Oxbook} for definitions from matroid theory.)

Falk and Proudfoot~\cite{FP02} showed that, for complex arrangements, the factorization \eqref{eq:pi_factors} has a topological explanation.  For any $X\in L(\A)$, let $p_X\colon V\to V/X$ denote the
projection of $V\cong\C^{\ell}$ onto its quotient by the linear space $X$.  Then $p_X$ restricts to a map $p_X\mid_{M(\A)}\colon M(\A)\to M(\A_X)$.  If, moreover, $X$ is modular, the restriction is a locally
trivial fibre bundle (shown by Terao~\cite{Ter86} in the case where
$X$ is a coatom, and Paris~\cite{Par00} in general: a detailed proof
appears in \cite{FP02}.)

The fibres $\A_v$ are homeomorphic to the (projective) complement of a realization of the complete principal truncation $\T_X(\M(\A))$,
(by \cite[Th.~2.1]{FP02}). % careful with attributions
Since the Poincar\'e polynomial of a complex arrangement counts the
Betti numbers of its complement, Stanley and Brylawski's factorization
\eqref{eq:pi_factors} is equivalent to the fact that the
Serre spectral sequence of the fibration sequence
\begin{equation}\label{eq:modfib}
\xymatrix{
\P M(\A_v)\ar[r] & M(\A)\ar^-{p}[r] & M(\A_X)
}
\end{equation}
degenerates at $E_2$.

The projection $p$ also induces a map $Y(\A)\to Y(\A_X)$ of reciprocal
planes, and of coordinate rings $\OT(\A_X)\to \OT(\A)$.  One of our main results is that,
if $X$ is modular, then $\OT(\A)$ is a free $\OT(\A_X)$-module. More precisely, we give an isomorphism of $\OT(\A_X)$-modules,
\begin{equation}\label{eq:intro_split}
\OT(\A)\cong \OT(\A_X)\otimes_\K \OT_{[n]-[X]}(\A),
\end{equation}
where the algebra $\OT_{[n]-[X]}(\A)$ denotes the coordinate ring of the fibre over zero (Theorem~\ref{thm:modular_decomp}).  In order to interpret this algebra, we introduce a {\em relative} Orlik-Terao algebra $\OT_H(\A)$, for any $H\in\A$ (Definition~\ref{def:OT_0}).  The fibre arrangement $\A_v$ comes with a distinguished hyperplane $X$, and we show (Theorem~\ref{thm:fibre}) that
\[
\OT_{[n]-[X]}(\A)\cong \OT_{X}(\A_v).
\]
Then Stanley's formula~\eqref{eq:pi_factors} appears by comparing Hilbert series under the isomorphism \eqref{eq:intro_split}, using a relative version of Terao's formula~\eqref{eq:Terao_theorem}.

The isomorphism \eqref{eq:intro_split} has a combinatorial explanation.  Proudfoot and Speyer~\cite{PS06} showed that the Orlik-Terao algebra of an arrangement $\A$ is a flat deformation of the Stanley-Reisner ring of the broken circuit complex $\bc(\A)$ of $\A$.  Brylawski and Oxley~\cite[Th.~1.6]{BO81} found $X$ is a modular flat of $\A$ if and only if the broken circuit complex decomposes as a join of induced subcomplexes:
\begin{equation}\label{eq:bc_splits}
\bc(\A)\cong \bc(\A)\mid_{[X]} * \bc(\A)\mid_{[n]-[X]}.
\end{equation}

We show (Theorem~\ref{thm:bc_modular}) that a flat $X$ is modular
if and only if $\bc(\A)\mid_{[n]-[X]}\cong\bc_0(\T_X(\M(\A)))$, where
$\bc_0$ denotes the reduced broken circuit complex (\S\ref{ss:OTintro}): this is a self-contained combinatorial result that seems to have been anticipated in the introduction to \cite{Bry77}, but the statement appears to be new.
Then, since a join of simplicial complexes gives rise to a tensor product of Stanley-Reisner rings, the decomposition \eqref{eq:intro_split} can be obtained by a deformation argument, using \cite{PS06}.

In Section~\ref{sec:apps}, we extract some consequences of our modular decomposition.  An arrangement $\A$ is {\em supersolvable} if there exists a maximal modular chain in $L(\A)$: that is, modular subspaces $X_i\in L(\A)$, for $1\leq i\leq \ell$, for which $X_1<\ldots<X_\ell$.  We show (Theorem~\ref{thm:koszul}) that, if $\A$ is supersolvable, then $\OT(\A)$ is a Koszul algebra.  We also consider a further generalization to hypersolvable arrangements (introduced by Jambu
and Papadima~\cite{JP98}), and compute Poincar\'e-Betti series for
$\OT(\A)$ explicitly in the special case of generic arrangements (\S\ref{ss:generic}).

Again, we note a parallel with the topology of the complement.  The cohomology ring of the complement $M(\A)$ (the {\em Orlik-Solomon algebra}) has a presentation which is similar to that of $\OT(\A)$.
By way of comparison, this algebra is a deformation of the {\em exterior} Stanley-Reisner ring of $\bc(\A)$.  Shelton and Yuzvinsky~\cite{ShYuz} showed the cohomology ring is Koszul if $\A$ is supersolvable, so our Theorem~\ref{thm:koszul} may be regarded as a commutative analogue.

Complete intersections generated by quadrics are known to be Koszul.
Typically, however, $\OT(\A)$ is not a complete intersection, even
if $\A$ is supersolvable.  In Section~\ref{sec:ci}, we characterize those arrangements for which $\OT(\A)$ is a quadratic complete
intersection.  Using results of Falk~\cite{Fa01} as well as Sanyal, Sturmfels, and Vinzant~\cite{SSV11}, we find that such arrangements
are exactly the supersolvable arrangements with exponents at most $2$.

%%%%%%%%%%%%%%%%%%%
%%%%%%%%%%%%%%%%%%%

\section{Background}
\label{sec:proj}
\subsection{Projection to closed subarrangements}
\label{ss:mod_fib}
Here, we recall in some more detail the results of \cite{FP02,Ter86,Par00}.
As in \S\ref{ss:intro1}, let $\A$ denote a central, essential arrangement of $n$ hyperplanes in an $\ell$-dimensional linear subspace $V$ of $\K^n$.  A hyperplane arrangement can be regarded
as a linear realization of a matroid without loops, and we will
denote the underlying matroid of $\A$ by $\M(\A)$.
The diagonal action of $\K^*$ on $\K^n$ restricts to $M(\A)\subseteq V$: we will let $\P\A$ denote the corresponding set of projective hyperplanes in $\P V$, and let $\P M(\A)$ denote their complement.

We will order the hyperplanes of $\A$ and take the underlying set of
$\M(\A)$ to be the set $[n]:=\set{1,\ldots,n}$, regarded as integers
indexing ordered hyperplanes.  We will abuse notation slightly and
regard $L(\A)$ both as the intersection lattice of $\A$ and the lattice of flats of $\M(\A)$: when the distinction is required, 
we write $[X]:=\set{i\in [n]\colon H_i\leq X}$ for $X\in L(\A)$.
The {\em rank} of a flat is its codimension
in $V$: let $L_p(\A)$ denote the flats of rank $p$.  {\em Coatoms} are flats of rank $\ell-1$.
Let $\A_X$ denote the subarrangement of $\A$
indexed by $[X]$, regarded as a hyperplane arrangement in the linear
space $V/X$.  Its intersection lattice is the lower interval $[V,X]$ of $L(\A)$.  Let $p_X\colon\K^n\to \K^{[X]}$ denote the coordinate projection given by deleting coordinates for hyperplanes $H\not\leq X$.

For a point $y\in V$, we note $p_X(y)=0$ if and only if $y_i=0$ for all hyperplanes $H_i\leq X$.  Thus we may identify $p_X(V)$ with $V/X$, and restrict $p_X$ further to hyperplane complements,
$p_X\colon M(\A)\to M(\A_X)$.
The map $p_X$ is compatible with the $\K^*$ action, so it induces a
map $\p_X\colon \P M(\A)\to\P M(\A_X)$.
Let $Q=\prod_{i=1}^n f_i$.  For convenience, order the hyperplanes
so that $H_i\leq X$ if and only if $1\leq i\leq n_X$, where $n_X=\abs{\A_X}$, and let
$Q_X=\prod_{i=1}^{n_X} f_i$.  Then, as schemes, $\P M(\A)=\Proj (\K[V]_Q)$, and $\P M(\A_X)=\Proj(\K[V/X]_{Q_X})$.

We note that, for any (reduced) point $v\in \P M(\A_X)$, the fibre over $v$ is the complement of a hyperplane arrangement:
consider the point projectively as a map $v\colon\P_\KK^0\to \P M(\A_X)$, for some extension $\KK$ of $\K$,
given by a graded homomorphism $v^*\colon \K[V/X]_{Q_X}\to\KK[t]$.
Then the homogeneous coordinate ring of $p^{-1}(v)$ is
\[
\K[V]_Q\otimes_{\K[V/X]_{Q_X}} \KK[t]\cong
\KK[\AA^1\times X]_{Q_v},
\]
where the polynomial $Q_v$ may be chosen to be the reduced image of $Q$ in the tensor product.

We make the following definition, noting that our notation differs slightly from that of \cite{FP02}, in that our arrangements are always
central.
\begin{defn}\label{def:fibre arrangement}
For each point $v\in \P M(\A_X)$, let $\A_{v,X}$ denote the  arrangement in $\AA^1\times X$ defined by $Q_v$.  We will simply
write $\A_v$ when the choice of $X$ is understood.
By construction, $\P M(\A_{v,X})=p_X^{-1}(v)$.  We take the underlying set of its matroid to be $\set{0}\cup([n]-[X])$.
\end{defn}
\begin{example}\label{ex:a3vsx3}
Consider the rank-$3$ arrangements $A_3$ and $X_3$ given, respectively, by defining equations $Q_1=xyz(x-y)(x-z)(y-z)$ and
$Q_2=xyz(x+y)(x+z)(y+z)$.  In each case, let $X$ be the linear subspace given by $x=y=0$, so that $\A_X$ is the rank-$2$ arrangement of $3$ lines, for $\A=A_3$ and $\A=X_3$.

If $v=[\alpha\colon\beta]\in\P M(\A_X)$ is a closed point and $\A=A_3$, then $Q_v=ctz(z-\alpha t)(z-\beta t)$, where $c$ is a unit, and the arrangement $\P\A_v$ consists of four points in $\P_\K^1$.
On the other hand, if $\A=X_3$, then $Q_v=ctz(z+\alpha t)(z+\beta t)$, and $\P\A_v$ consists of four points in $\P_\K^1$ as long as
$\alpha\neq \beta$.  Figure~\ref{fig:proj} shows typical fibres for
both arrangements, where each complement $\P M\subseteq\P^2$ is drawn in the affine chart with $z=1$.

If $\eta$ is the generic point in $\P M(\A_X)$, let $\KK=\K(V/X)_{0}$ and map $\K[V/X]\to\KK[t]$ by $x\mapsto t$, $y\mapsto (y/x)t$.  Then
for $\A=A_3$, $Q_\eta$ is a unit multiple of $tz(z-t)(z-(y/x)t)$, so
$\P\A_\eta$ consists of four points in $\P_\KK^1$, and similarly for
$\A=X_3$.
\end{example}

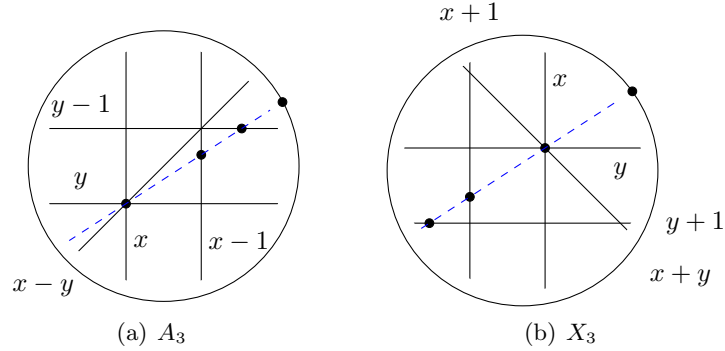
\begin{figure}
\centering
\subfigure[$A_3$]{
\begin{tikzpicture}%[scale=0.85]
\tikzstyle{every node}=[font=\small]
\draw (0.5,0.5) circle (1.8);
% labels
\node at (-0.6,0.3) {$y$};
\node at (-0.6,1.3) {$y-1$};
\node at (0.2,-0.5) {$x$};
\node at (1.5,-0.5) {$x-1$};
\node at (-1.1,-1.1) {$x-y$};
% intersections
\draw[fill=black] (0,0) circle (0.15em);
\draw[fill=black] (1,0.65) circle (0.15em);
\draw[fill=black] (1.538,1) circle (0.15em);
\draw[fill=black] (2.08,1.352) circle (0.15em);
% lines
\clip (0.5,0.5) circle (1.6);
\draw (0,-2.5) -- (0,2.5);
\draw (1,-2.5) -- (1,2.5);
\draw (-2.5,0) -- (2.5,0);
\draw (-2.5,1) -- (2.5,1);
\draw (-2.5,-2.5) -- (2.5,2.5);
\draw[style=dashed,color=blue] (-2,-1.3) -- (2,1.3);
\end{tikzpicture}
}
\qquad
\subfigure[$X_3$]{
\begin{tikzpicture}%[scale=0.85]
\tikzstyle{every node}=[font=\small]
\draw (-0.3,-0.3) circle (1.8);
% labels
\node at (1,-0.3) {$y$};
\node at (2,-1) {$y+1$};
\node at (0.2,0.9) {$x$};
\node at (-1,1.8) {$x+1$};
\node at (1.8,-1.7) {$x+y$};
% intersections
\draw[fill=black] (0,0) circle (0.15em);
\draw[fill=black] (-1,-0.65) circle (0.15em);
\draw[fill=black] (-1.538,-1) circle (0.15em);
\draw[fill=black] (1.16,0.754) circle (0.15em);
% lines
\clip (-0.3,-0.3) circle (1.6);
\draw (0,-2.5) -- (0,2.5);
\draw (-1,-2.5) -- (-1,2.5);
\draw (-2.5,0) -- (2.5,0);
\draw (-2.5,-1) -- (2.5,-1);
\draw (-2.5,2.5) -- (2.5,-2.5);
\draw[style=dashed,color=blue] (-2,-1.3) -- (2,1.3);
\end{tikzpicture}
}
\caption{Typical fibres for $x=y=0$}\label{fig:proj}
\end{figure}

In general, then, the underlying matroid of the arrangement $\A_v$ depends on the choice of $v$.
The typical value, however, is the {\em complete principal truncation} of $\M(\A)$ with respect to $X$.  We denote it by $\T_X(\M(\A))$: see, e.g.,~\cite[p.~379]{Oxbook}.

\begin{prop}\label{prop:fibre}
For any flat $X$ of an arrangement $\A$, there is a dense open subscheme $U$ of $\P M(\A_X)$ for which $\M(\A_{v,X})=\T_X(\M(\A))$,
for all points $v$ in $U$.
\end{prop}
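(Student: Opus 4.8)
The plan is to identify the matroid $\M(\A_{v,X})$ directly from the defining polynomial $Q_v$ and show that, generically, the dependencies among the linear forms cutting out $\A_{v,X}$ are exactly those defining $\T_X(\M(\A))$. Recall that $\A_{v,X}$ lives in $\AA^1\times X$, with one distinguished hyperplane indexed by $0$ coming from the coordinate $t$, and the remaining hyperplanes indexed by $[n]-[X]$ arising as reduced images of the forms $f_j$, $j\notin[X]$, in $\KK[t]\otimes_{\K[V/X]_{Q_X}}\K[V]_Q$. So the first step is to make the identification: specializing $v$ amounts to choosing a generic point $v\colon\P^0_\KK\to\P M(\A_X)$, and the forms $f_j\bmod X$ for $j\in[X]$ become (unit multiples of) linear forms in $t$ alone, while for $j\notin[X]$ the image of $f_j$ is a linear form on $\AA^1\times X$ involving $t$ nontrivially. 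Contracting the hyperplane $0$ (the locus $t=0$) recovers, on the $X$ side, precisely the restriction $\A/X$-data; this is the combinatorial content of the principal truncation.

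Next I would verify the two matroid operations that build $\T_X(\M(\A))$ out of $\M(\A)$: first, the \emph{principal extension} by a new element $e$ placed freely on the flat $X$ (realized geometrically by the generic choice of $v$, which picks a generic point of the subspace $X$ after the Cremona map — the genericity of $v$ in $\P M(\A_X)$ is exactly what makes $e$ modular-free on $X$); second, \emph{contraction} by $e$, realized by restricting to the hyperplane $\{0\}$, i.e. $t=0$. The composite matroid on ground set $([n]-[X])$ is by definition $\T_X(\M(\A))$ (see \cite[p.~379]{Oxbook}). The key point is that each of these two steps is transparent on the realization over $\KK$: the rank of a subset $S\subseteq[n]-[X]$ in $\M(\A_{v,X})$ equals the dimension of the span of the corresponding forms in $\KK[\AA^1\times X]$, and a short computation with the generic parameters of $v$ shows this span has the dimension prescribed by truncation, with the exceptional behaviour (a drop in rank) occurring only on the proper closed subset where some resultant/discriminant in the coordinates of $v$ vanishes.

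The scheme-theoretic formulation then falls out by a standard Noetherian/constructibility argument: the function $v\mapsto\M(\A_{v,X})$ is determined by the vanishing or non-vanishing of the finitely many minors of the coefficient matrix of the forms $\{f_j(v)\}_{j\notin[X]}$, each of which is a regular function on $\P M(\A_X)$. The locus where $\M(\A_{v,X})=\T_X(\M(\A))$ is therefore the complement of a finite union of proper closed subsets — proper because the generic point $\eta$ lies in it, as computed above (and as illustrated in Example~\ref{ex:a3vsx3}). Hence it is a dense open subscheme $U$, and the claim follows. The main obstacle I anticipate is the bookkeeping in the middle step — correctly matching the realization of $\A_{v,X}$ over the generic point to the \emph{two}-step extend-then-contract description of $\T_X(\M(\A))$, and in particular checking that the genericity of $v\in\P M(\A_X)$ (not just $v\in\P X$) is precisely the hypothesis that forces the new element to be placed freely rather than on some smaller flat; everything else is either the definition of principal truncation or routine semicontinuity of matroid rank in a family of linear realizations.
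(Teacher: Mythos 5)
Your overall strategy is the same as the paper's, which simply invokes \cite[Thm.~2.1]{FP02}: the matroid $\M(\A_{v,X})$ equals $\T_X(\M(\A))$ precisely when finitely many determinants, each a regular function of $v$, are nonzero, and the locus where they all survive is open and contains the generic point, hence is dense. Your semicontinuity/constructibility argument at the end is exactly this and is fine.

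However, the middle step --- the explicit identification of $\T_X(\M(\A))$ --- is wrong for flats of rank $\geq 3$. A single principal extension by one element $e$ placed freely on $X$ followed by contraction of $e$ is the \emph{principal truncation} $T_X(\M(\A))$: it has rank $\ell-1$ and lowers the rank of $X$ only to $\rho(X)-1$. The \emph{complete} principal truncation $\T_X(\M(\A))$ is the result of iterating this $\rho(X)-1$ times (equivalently, freely adding $\rho(X)-1$ points on $X$ and contracting them all), so that $X$ collapses to a rank-one flat; compare Lemma~\ref{lem:pr_trunc2}, where $\rho_{\T_X(\A)}(X)=1$ and the total rank is $\ell-\rho(X)+1$. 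This matches the fibre: $\A_{v,X}$ lives in $\AA^1\times X$, of dimension $\ell-\rho(X)+1$, and in dual terms arises by quotienting the ground vectors $\set{f_j}$ by the codimension-one subspace $W_v^\perp$ of $\langle f_j\colon j\in[X]\rangle$ determined by $v$ --- a $(\rho(X)-1)$-dimensional space, generic exactly when $v$ is. Your one-step version agrees with this only when $\rho(X)\leq 2$ (as in Example~\ref{ex:a3vsx3}, which is why the example does not expose the problem). Two smaller slips in the same step: the ground set of $\T_X(\M(\A))$ here is $\set{0}\cup([n]-[X])$, not $[n]-[X]$ (the elements of $[X]$ all become parallel to the distinguished element $0$ realized by $t$); and ``contraction by $e$ realized by restricting to the hyperplane $t=0$'' conflates two different operations --- restricting the fibre arrangement to $t=0$ would land you in $X$ itself and produce something of rank $\ell-\rho(X)$, not the fibre matroid. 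Since the proposition is stated for an arbitrary flat $X$, the identification needs to be the iterated (or equivalently the one-shot codimension-$(\rho(X)-1)$ quotient) version before the rest of your argument goes through.
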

\begin{proof}
As in \cite[Thm.~2.1]{FP02}, an arrangement $\A_{v,X}$ is a realization of the matroid $\T_X(\M(\A))$ as long as some finite
list of determinants depending on $v$ are all nonzero.
\end{proof}
\begin{defn}\label{def:TXA}
In particular, if $\eta\in\P M(\A_X)$ is the generic point, we always have $\M(\A_{\eta,X})=\T_X(\M(\A))$.  Let $\T_X(\A)=\A_{\eta,X}$.  This is an arrangement over the field $\K(V/X)_0$, and is in some sense the canonical realization of $\T_X(\M(\A))$, given $\A$.
\end{defn}

In the complex analytic case, one can say more, provided that $X$ is modular.

\begin{thm}[Theorem~2.4, \cite{FP02}]\label{thm:fibration}
If $\A$ is a complex arrangement and $X$ is modular,
the restriction of $p_X$ to $M(\A)$ is a fibre bundle projection.  The
fibres are homeomorphic to the complement of $\P\A_{v,X}$, for any choice of $v\in M(\A_X)$.
\end{thm}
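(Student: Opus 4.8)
The plan is to reduce the assertion to two facts — local triviality of $p_X$ over $M(\A_X)$, and independence of the homeomorphism type of the fibre from the point $v$ — and to derive both from the single combinatorial input that modularity provides. Since $p_X$ is the restriction of the $\K^{*}$-equivariant linear projection $\K^{n}\to\K^{[X]}$, and (as recorded in \S\ref{ss:mod_fib}) the scheme-theoretic fibre of $p_X$ over a point $v\in\P M(\A_X)$ is $\P M(\A_{v,X})$, it suffices to establish these two facts. By Proposition~\ref{prop:fibre} and Definition~\ref{def:TXA}, the underlying matroid of $\A_{v,X}$ is already known to equal $\T_X(\M(\A))$ on a dense open subset of $\P M(\A_X)$, and at the generic point; so the real work is to upgrade ``dense open'' to ``everywhere on $\P M(\A_X)$'', and this is the only place the modularity of $X$ will enter.

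I would carry out that upgrade as follows. The proof of Proposition~\ref{prop:fibre} cuts out the degeneracy locus — the set of $v$ for which $\M(\A_{v,X})$ differs from $\T_X(\M(\A))$ — by the vanishing of a finite list of determinants in the coordinates of $v$. When $X$ is modular I claim each such determinant is a unit times a product of the linear forms $f_k$ with $H_k\in\A_X$; since those forms are nowhere zero on $M(\A_X)$, we obtain $\M(\A_{v,X})=\T_X(\M(\A))$ for every $v\in\P M(\A_X)$, and the same computation carried out over $\K(V/X)_0$ covers the generic fibre of Definition~\ref{def:TXA}. The mechanism behind the claim is the defining property of modularity: if a circuit $C$ of $\M(\A)$ meets $[n]-[X]$ in more than an independent spanning subset, then the flat $Y$ spanned by $C\cap([n]-[X])$ satisfies $X+Y\in L(\A)$, and this forces $C$ to contain an element of $[X]$ — the combinatorial shadow of this is exactly Brylawski and Oxley's decomposition~\eqref{eq:bc_splits} and the Brylawski factorization~\eqref{eq:pi_factors}. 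I expect the main obstacle to be precisely here: turning ``circuits through $X$ behave well'' into the statement that the relevant minors are, up to units, monomials in the $f_k$ is the genuine bookkeeping in the argument.

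With the fibre matroid shown to be constant, I would finish by invoking equisingularity. The family $\{\A_{v,X}\}_{v\in\P M(\A_X)}$ is an algebraic family of complex arrangements in a fixed affine space, with constant intersection lattice, over the connected smooth base $\P M(\A_X)$; by the lattice-isotopy principle — Randell's theorem that lattice-isotopic complex arrangements are ambient isotopic, applied along paths in the base and globalised by a Thom--Mather argument — the projection $p_X$ is locally trivial, hence a fibre bundle, with every fibre homeomorphic to $\P M(\A_{v,X})$ for a fixed $v$. As a check on the geometry, and as a self-contained proof when $X$ is a modular coatom: choose a splitting $V\cong (V/X)\times X$ so that the hyperplanes outside $\A_X$ are $f_i=x_\ell+g_i$ with $g_i$ pulled back from $V/X$; modularity says each difference $g_i-g_j$ is a unit times some $f_k$ with $H_k\in\A_X$, so the numbers $-g_1(u),\dots,-g_m(u)$ (with $m=\abs{\A}-\abs{\A_X}$) stay pairwise distinct as $u$ ranges over $M(\A_X)$, and $M(\A)$ is visibly the pullback along $u\mapsto(-g_i(u))_i$ of the Fadell--Neuwirth bundle $\mathrm{Conf}_{m+1}(\C)\to\mathrm{Conf}_m(\C)$; here the fibre $\C\setminus\{-g_1(u),\dots,-g_m(u)\}$ is $\P M(\A_{v,X})$, the extra removed point corresponding to the element $0$ of its matroid. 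The passage from this coatom case to a general modular flat can then be obtained either from the equisingularity argument above or, as in \cite{Par00}, by an induction reducing to it — but the inductive bookkeeping is of the same nature as, and no easier than, the minor computation flagged above.
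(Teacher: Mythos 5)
The paper offers no proof of this statement: it is imported verbatim from Falk--Proudfoot \cite[Thm.~2.4]{FP02}, with the coatom case due to Terao \cite{Ter86} and the general case to Paris \cite{Par00}. Judged as a free-standing argument, your proposal has the right overall shape (constant fibre combinatorics over $M(\A_X)$, then a topological triviality principle), and your coatom argument via the pull-back of the Fadell--Neuwirth bundle is correct and is essentially Terao's original proof. But there are two genuine gaps in the general case.

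First, the passage from ``the fibre matroid is constant over $M(\A_X)$'' to ``$p_X$ is locally trivial'' is not justified as written. Randell's lattice-isotopy theorem, applied along paths in the base, yields only that fibres over points in the same path component are homeomorphic --- that is the second assertion of the theorem, not the first; a map all of whose fibres are homeomorphic need not be a bundle. To invoke Thom--Mather you need a \emph{proper} stratified submersion, and $p_X|_{M(\A)}$ is not proper: the real content is a fibrewise compactification, a Whitney stratification of the compactified family together with its boundary divisor, and the check that the projection is a submersion on each stratum (this is exactly where constancy of the lattice enters). None of that appears. Second, the combinatorial input is asserted rather than proved: the claim that the degeneracy determinants of Proposition~\ref{prop:fibre} vanish only on $\bigcup_{H_k\in\A_X}H_k$ when $X$ is modular is itself \cite[Thm.~2.2]{FP02}, and you explicitly defer it as ``bookkeeping.'' The justification you sketch --- that the flat $Y$ spanned by $C\cap([n]-[X])$ satisfies $X+Y\in L(\A)$ --- is vacuous, since $X+Y\in L(\A)$ holds for \emph{every} $Y$ by the definition of modularity; the tool actually needed is the short-circuit axiom (Lemma~\ref{lem:shortcircuit}), which converts a dependency arising among the restricted forms in $Q_v$ into a circuit of $\A$ meeting $[X]$ and hence shows the offending minor is divisible by some $f_k$ with $H_k\in\A_X$. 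Finally, the proposed fallback of reducing the general modular flat to the coatom case by induction is not available without further argument: a modular flat of corank at least $2$ need not lie under any modular coatom (only supersolvable arrangements carry full modular chains), and Paris's proof is not such an induction.
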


In the (important) special case where $X$ is a modular coatom,
$\A_{v,X}$ is an arrangement of lines in $\C^2$, so the fibre
is the complement of finitely many points in $\P^1$.

\subsection{Broken circuits and the Orlik-Terao algebra}
\label{ss:OTintro}
For any arrangement $\A$, Proudfoot and Speyer~\cite{PS06} showed
that the Orlik-Terao algebra (Definition~\ref{def:OTalgebra}) admits the following presentation, for which the reader is also referred to \cite[Prop.~2.1]{st}.  Let $S:=\K[y_1,\ldots,y_n]$ denote the
coordinate ring of $\K^n$.  Then the inclusion $Y(\A)\subseteq \K^n$
induces a surjective homomorphism
\begin{equation}\label{eq:OTpres}
\K[y_1,\ldots,y_n]\to \OT(\A)
\end{equation}
sending $y_i$ to $1/f_i$, for $1\leq i\leq n$.  Let $I(\A)$ denote the
kernel of the map \eqref{eq:OTpres}.

For any $c\in\K^{n}$, denote its support by $[c]\subseteq[n]$.  The
{\em circuits} of $\A$ are those $c\in\K^{n}$ for which $\sum_{i=1}^n c_i f_i=0$ and $[c]$ is minimal.   If $c$ is a circuit, the element
\begin{equation}\label{eq:OTrels}
r_c:=\sum_{i\in [c]}c_i \prod_{j\in[c]-\set{i}} y_j
%y_1\cdots\widehat{y_i}\cdots y_k
\end{equation}
is easily seen to be in $I(\A)$.  $c$ is determined
up to a nonzero scalar multiple by its support: if $C$ is a circuit
of $\M(\A)$, let $r_C=r_c$ where $C=[c]$ and $r_c$ is monic.  Let
$\CC(\M(\A))$ denote the set of circuits of $\M(\A)$.

The relations $\set{r_c}$ are determinantal.
\begin{prop}\label{prop:fitting}
Let $X$ be a flat of $\A$ of rank $k$, and $n_X=\abs{X}$.  If $n_X>k\geq1$, let
\[
A_X=\left(\partial_j\log(f_i/f_{n_X})\right)_{1\leq i\leq n_X-1,\, 1\leq j\leq k}
\]
where $\set{x_1,\ldots,x_k}$ generate $\K[X]$, and $\partial_j:=\partial/\partial x_j$.  
Regarded as a matrix with (linear) entries in $S$,
\[
\Fitt_k(A_X)=(r_C\colon\text{$C\in\CC(\M(\A))$ and $\cl{C}=[X]$}),
\]
where $\cl{C}$ denotes the smallest flat containing $C$.
\end{prop}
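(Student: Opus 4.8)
The plan is to identify the $k$-th Fitting ideal of $A_X$ with the ideal generated by the maximal minors of a slightly larger matrix whose minors are exactly the relations $r_C$ for circuits $C$ with closure $[X]$. First I would clear denominators: multiply the $i$-th row of $A_X$ by $f_i f_{n_X}$, or better, work with the logarithmic derivatives directly. Note that $\partial_j \log(f_i/f_{n_X}) = \partial_j f_i / f_i - \partial_j f_{n_X}/f_{n_X}$, and since each $f_i$ is linear in the coordinates $x_1,\dots,x_k$ on $X$, we have $\partial_j f_i = $ (the coefficient of $x_j$ in $f_i$), a constant. So after substituting $y_i \leftrightarrow 1/f_i$, the entries of $A_X$ become $y_i(\partial_j f_i) - y_{n_X}(\partial_j f_{n_X})$, and one sees that $A_X$ is, up to elementary row operations and a diagonal twist, the matrix obtained from the $n_X \times k$ Jacobian-type matrix $B_X = (\partial_j f_i \cdot y_i)_{i,j}$ by subtracting the last row from all others. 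Thus $\Fitt_k(A_X)$, which is generated by the $k \times k$ minors of the $(n_X-1)\times k$ matrix $A_X$, equals the ideal of maximal minors of $B_X$, i.e.\ $\Fitt_k(B_X)$.

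Next I would reinterpret the maximal minors of $B_X$ combinatorially. A subset $D \subseteq [X]$ with $|D| = k+1$ is a circuit of $\M(\A)$ with $\cl{D} = [X]$ precisely when the linear forms $\{f_i : i \in D\}$ restricted to $X$ satisfy a unique (up to scalar) linear dependence with full support $D$ and span a $k$-dimensional space; equivalently, the $(k+1)\times k$ matrix of coefficients $(\partial_j f_i)_{i \in D, j}$ has rank $k$ and all its $k \times k$ minors nonzero. By Cramer's rule, the unique dependence coefficient $c_i$ (for $i \in D$) is, up to a global sign and scalar, the signed $k \times k$ minor of the coefficient matrix obtained by deleting row $i$. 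Expanding the $(k+1) \times (k+1)$ "Laplace-style" determinant — or rather observing that the $k \times k$ minors of $B_X$ indexed by rows $D \setminus \{i\}$ are $\left(\prod_{\ell \in D \setminus \{i\}} y_\ell\right)$ times the coefficient minor — shows that each maximal minor of $B_X$ is a scalar multiple of $\sum_{i \in D} c_i \prod_{\ell \in D\setminus\{i\}} y_\ell = r_D$ when $D$ is such a circuit, and is zero (or divisible by an $r_{D'}$ for a proper sub-circuit $D'$) otherwise. Running over all $(k+1)$-subsets of $[X]$ on which the rank-$k$ condition holds, the maximal minors of $B_X$ generate exactly $(r_C : C \in \CC(\M(\A)),\ \cl{C}=[X])$.

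The main obstacle, I expect, is the bookkeeping in the last step: matching signs between the Cramer's-rule expansion of the circuit coefficients $c_i$ and the Laplace expansion of the minors of $B_X$, and handling $(k+1)$-subsets $D \subseteq [X]$ that are \emph{not} circuits (either because they are dependent of rank $< k$, giving a vanishing minor, or because they properly contain a circuit, in which case the corresponding minor lies in the ideal generated by the $r_{D'}$ for the genuine sub-circuits $D' \subsetneq D$). For the latter one can argue that any such minor is already in the ideal generated by smaller circuit relations with closure contained in $[X]$ — but since those circuits also have closure exactly some flat $\leq X$, a short lemma is needed to see they do not enlarge the ideal beyond $(r_C : \cl{C}=[X])$; in fact the standard fact that $I(\A)$ is generated by the $r_C$ and that the $r_C$ with $\cl{C}=[X]$ account for all relations "supported on $X$" makes this clean. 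The reduction from $A_X$ to $B_X$ via row operations is routine linear algebra and should be dispatched in a sentence or two.
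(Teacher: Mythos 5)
There is a genuine gap, and it sits in your very first reduction. You pass from $A_X$ to the $n_X\times k$ matrix $B_X=(y_i\,\partial_j f_i)_{i,j}$ and assert $\Fitt_k(A_X)=\Fitt_k(B_X)$. This is false. Subtracting the last row of $B_X$ from the others produces an $n_X\times k$ matrix whose top $(n_X-1)\times k$ block is $A_X$ but whose last row is still there; deleting that row can strictly shrink the ideal of maximal minors. Worse, each maximal minor of $B_X$ is a single monomial: for a $k$-subset $D$ it equals $\det\bigl((\partial_j f_i)_{i\in D}\bigr)\cdot\prod_{i\in D}y_i$, not an alternating sum over $D$. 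For three generic lines $f_1=x$, $f_2=y$, $f_3=x+y$ one gets $\Fitt_2(B_X)=(y_1y_2,\,y_1y_3,\,y_2y_3)$, which is not contained in $I(\A)$, whereas $\Fitt_2(A_X)=(r_{123})$. The sum $\sum_{i\in D}\pm c_i\prod_{\ell\in D\setminus\{i\}}y_\ell$ that you want is a $\K$-linear combination of $k+1$ \emph{different} maximal minors of $B_X$, so it lies in $\Fitt_k(B_X)$, but the individual monomial summands do too, and they are the problem.

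The missing idea is the extra column. The paper works with the $n\times(\ell+1)$ matrix $J'$ obtained from $\bigl[J\mid\mathbf{v}\bigr]$, where $J=(\partial_j f_i)$ and $\mathbf v=(f_1,\dots,f_n)^T$, after dividing row $i$ by $f_i$: its rows are $\bigl(y_i\partial_1f_i,\dots,y_i\partial_\ell f_i,\,1\bigr)$. Laplace expansion of a single $(\ell+1)\times(\ell+1)$ minor of $J'$ along the column of ones produces exactly your Cramer's-rule sum, so $r_C$ is one maximal minor (up to a unit) when $C$ is a circuit of full rank; and subtracting the last row turns the last column into $(0,\dots,0,1)^T$, which identifies $\Fitt_{\ell+1}(J')$ with $\Fitt_\ell(A_X)$, since row operations preserve determinantal ideals and the minors avoiding the last row then vanish. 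Your Cramer's-rule identification of the circuit coefficients $c_i$ with signed $k\times k$ coefficient minors is correct and is exactly the paper's mechanism; what you need to repair is the choice of matrix whose \emph{single} determinants carry those minors as cofactors. Your closing worry about $(\ell+1)$-subsets that properly contain a circuit is legitimate (such a minor is $\prod_{i\in D-C'}y_i\cdot r_{C'}$ for the unique circuit $C'\subseteq D$, and one must check it lies in the stated ideal); the paper's own proof is silent on this point, so resolving it carefully would be a genuine improvement rather than a deviation.
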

\begin{proof}
By restricting to a closed subarrangement if necessary, we may assume $X$ has rank $k=\ell$ and $n_X=n$.  If $\A$ is not the Boolean arrangement, then $\ell<n$.  Let $J$ be the $n\times \ell$ matrix whose
$(i,j)$ entry is $\partial_j f_i$, and let ${\mathbf v}=(f_1,\ldots,f_n)^T$.  Since $f_i$ is linear for $1\leq i\leq n$, $J(x_1,\ldots,x_\ell)^T={\mathbf v}$.  Let $J'$ denote
the $n\times(\ell+1)$ matrix obtained from $J\mid {\mathbf v}$ by
dividing row $i$ by $f_i$.  Evaluated on $M(\A)$, the maximal minors
of $J'$ vanish.  In fact, if $C$ is a circuit of rank $\ell$,
$r_C$ is a unit multiple of the minor $J'_{C,[\ell+1]}$.  Now
subtract row $n$ of $J'$ from each row $i$ for $1\leq i\leq n-1$:
the result may be written
\[
\begin{tikzpicture}[scale=0.75,baseline=(current bounding box.center)]
\matrix (partials) [matrix of math nodes,
,left delimiter=(,right delimiter=)]
{
\partial_1\log(f_1/f_n) & \cdots &
\partial_{\ell}\log(f_1/f_n)  &[1em]  0\\
\vdots & \ddots & \vdots & \vdots \\
\partial_1\log(f_{n-1}/f_n) & \cdots &
\partial_\ell\log(f_{n-1}/f_n)  & 0 \\[5pt]
\partial_1\log f_n & \cdots & \partial_\ell\log f_n & 1\\
};
\draw[dashed] 
      ([xshift=-0.5em]partials-1-4.north west-|partials-4-4.south west) -|
      ([xshift=-0.5em]partials-4-4.south west);
\draw[dashed] 
      ([yshift=-0.4em]partials-3-1.south west-|partials-3-4.south east) -|
      ([yshift=-0.4em]partials-3-1.south west);
\end{tikzpicture}
\]

and the claim follows.
\end{proof}

These are, in fact, the only relations:
\begin{thm}[Theorem~4, \cite{PS06}]\label{thm:PS_grobner}
If $\A$ is an arrangement of $n$ hyperplanes, we have
$\OT(\A)\cong\K[y_1,\ldots,y_n]/I(\A)$, where
$I(\A)=(r_c\colon \text{$c$ is a circuit of $\A$})$.  Moreover,
the relations $\set{r_c}$ form a universal Gr\"obner basis for $\OT(\A)$ for which
\begin{equation}\label{eq:SR_def}
\In \OT(\A)\cong \K[y_1,\ldots,y_n]/J_{\bc(\A)},
\end{equation}
where $J_{\bc(\A)}$ is the Stanley-Reisner ideal of the broken circuit complex with respect to the given (arbitrary) ordering.
\end{thm}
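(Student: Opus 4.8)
\emph{Plan.} The argument is a squeeze between two known Hilbert series. Write $S=\K[y_1,\dots,y_n]$, let $I(\A)=\ker\big(S\to\OT(\A)\big)$ be the homogeneous ideal of \eqref{eq:OTpres} (homogeneous since $Y(\A)$ is a cone), and let $I_0=\big(r_c\colon c\in\CC(\M(\A))\big)$ be the (likewise homogeneous) ideal generated by the circuit relations \eqref{eq:OTrels}. Clearing denominators in $\sum_{i\in[c]}c_if_i=0$ shows $r_c\in I(\A)$, so $I_0\subseteq I(\A)$. Fix an arbitrary term order $<$ on $S$; it induces a linear order on $[n]$ via $y_i\mapsto i$, and we form $\bc(\A)$ relative to that order. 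I aim to establish $I_0=I(\A)$ together with $\In_{<}I(\A)=\big(\In_{<}r_c\colon c\big)=J_{\bc(\A)}$; since $<$ is arbitrary and $\CC(\M(\A))$ is finite, this proves $\{r_c\}$ is a universal Gr\"obner basis and that \eqref{eq:SR_def} holds.

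\emph{Step 1: leading terms are broken circuits.} By \eqref{eq:OTrels} the monomials appearing in $r_c$ are the squarefree monomials $\prod_{j\in C\setminus\{i\}}y_j$ as $i$ ranges over $C=[c]$; since $\big(\prod_{j\in C\setminus\{i\}}y_j\big)\big/\big(\prod_{j\in C\setminus\{i'\}}y_j\big)=y_{i'}/y_i$, the $<$-largest of them omits the index $e\in C$ with $y_e$ smallest under $<$. Hence $\In_{<}r_c=\prod_{j\in C\setminus\{e\}}y_j$, the monomial of the broken circuit of $C$ for the order $<$ induces on $[n]$, and conversely every broken circuit monomial arises this way. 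As a subset of $[n]$ is a non-face of $\bc(\A)$ exactly when it contains a broken circuit, $\big(\In_{<}r_c\colon c\in\CC(\M(\A))\big)=J_{\bc(\A)}$, whence $J_{\bc(\A)}\subseteq\In_{<}I_0\subseteq\In_{<}I(\A)$.

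\emph{Step 2: the squeeze collapses.} For each degree $d$,
\[
\dim_\K(S/I(\A))_d\;\le\;\dim_\K(S/I_0)_d\;=\;\dim_\K(S/\In_{<}I_0)_d\;\le\;\dim_\K(S/J_{\bc(\A)})_d,
\]
the first inequality because $I_0\subseteq I(\A)$, the middle equality because a homogeneous ideal and its initial ideal have the same Hilbert function, the last because $J_{\bc(\A)}\subseteq\In_{<}I_0$. The left-hand term equals $\dim_\K\OT(\A)_d$, which by Terao's formula \eqref{eq:Terao_theorem} is the coefficient of $t^d$ in $\pi(\A,t/(1-t))$. For the right-hand term I would use Whitney's theorem: the number of $(i-1)$-dimensional faces of $\bc(\A)$ equals the number of no-broken-circuit $i$-subsets of $[n]$, which is the coefficient of $t^i$ in $\pi(\A,t)$; inserting this into the standard Stanley--Reisner Hilbert series $\sum_{i\ge0}f_{i-1}(\bc(\A))\big(t/(1-t)\big)^i$ yields $h(S/J_{\bc(\A)},t)=\pi(\A,t/(1-t))$ as well. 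So the two ends of the display agree in all degrees, every inequality is an equality, and then: equality of the outer terms with $I_0\subseteq I(\A)$ gives $I_0=I(\A)$; equality $\dim_\K(S/\In_{<}I(\A))_d=\dim_\K(S/J_{\bc(\A)})_d$ with $J_{\bc(\A)}\subseteq\In_{<}I(\A)$ gives $\In_{<}I(\A)=J_{\bc(\A)}=\big(\In_{<}r_c\colon c\big)$, i.e.\ $\{r_c\}$ is a Gr\"obner basis realizing \eqref{eq:SR_def}.

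\emph{Main obstacle.} Once Terao's formula \eqref{eq:Terao_theorem} is granted there is no real obstacle; the remaining content is the bookkeeping of Step 1 and the classical $h$-vector identity for the broken circuit complex. The point that needs care is the dependence on conventions: a term order fixes a linear order on the ground set, so the complex $\bc(\A)$ in \eqref{eq:SR_def} genuinely depends on $<$, and ``the broken circuit of $C$'' must be read as $C$ minus its $<$-least element to line up with the leading-term computation. Different orders produce combinatorially different broken circuit complexes, but Whitney's theorem forces all of them to have the same $f$-vector, hence the common Hilbert series $\pi(\A,t/(1-t))$ --- and this uniformity is exactly what makes a single finite set $\{r_c\}$ a Gr\"obner basis for every term order at once.
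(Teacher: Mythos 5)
Your argument is correct. Note that the paper offers no proof of this statement at all: it is quoted verbatim as Theorem~4 of \cite{PS06}, and your squeeze --- every $\In_{<}r_c$ is a broken-circuit monomial, so $J_{\bc(\A)}\subseteq\In_{<}I_0\subseteq\In_{<}I(\A)$, while Terao's formula \eqref{eq:Terao_theorem} together with Whitney's NBC theorem forces the Hilbert functions at both ends of the chain to coincide --- is essentially the Proudfoot--Speyer proof itself. The only point worth making explicit is that there is no circularity in invoking \eqref{eq:Terao_theorem}: Terao's and Berget's proofs of that formula work directly with the subalgebra of rational functions and do not rely on the presentation being established here.
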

To expand on the second statement, we recall the definition of the
broken circuit complex.  Fix any order of the hyperplanes $\A$.  A
subset of $[n]$ is a {\em broken circuit} if it is of the form
$C-\set{i}$, where $C$ is a circuit and $i$ is its least element.
A subset $I$ of $[n]$ is called a nbc-set if it does not contain
a broken circuit.  Clearly, nbc-sets are independent.  They
form an abstract simplicial complex on the vertex set $[n]$,
denoted $\bc(\A)$ and called the {\em broken circuit complex}.
Similarly, the {\em reduced broken circuit complex}, denoted $\bc_0(\A)$, consists of all subsets of $[n]-\set{e}$ not containing a broken circuit, where $e$ is the least element: see 
\cite[\S 7.4]{Bj94} and \cite{Bry77} for reference.  In particular, the complexes $\bc(\A)$ and $\bc_0(\A)$ are pure of dimension $\ell$ and $\ell-1$, respectively, where $\ell$ is the rank of $\A$.

If $\A$ is supersolvable, then the broken circuit complex decomposes
inductively as a join of zero-dimensional complexes, by \eqref{eq:bc_splits}, so in this case $\bc(\A)$ is a flag complex:
that is, minimal non-simplices have two vertices.  Consequently, the Stanley-Reisner ideal $J_{\bc(\A)}$ is generated in degree $2$, which is to say
that $\OT(\A)$ is a $G$-algebra: that is, it possesses a quadratic Gr\"obner basis.  This has the following consequence (see \cite{QAbook}):
\begin{thm}\label{thm:koszul}
If an arrangement $\A$ is supersolvable, then $\OT(\A)$ is a Koszul
algebra.
\end{thm}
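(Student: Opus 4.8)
The plan is to reduce the statement to two standard facts: the Stanley--Reisner ideal of a flag simplicial complex is generated by quadrics, and a graded algebra admitting a quadratic Gr\"obner basis is Koszul. The only genuine content is the combinatorial claim that $\bc(\A)$ is flag when $\A$ is supersolvable, and this follows from the Brylawski--Oxley decomposition \eqref{eq:bc_splits} applied repeatedly along a modular chain.

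First I would fix a maximal modular chain $X_1 < \cdots < X_\ell = V$ in $L(\A)$ with $\rk X_i = i$, and prove by induction on $\ell$ that $\bc(\A)$ is a flag complex; the cases $\ell \le 1$ are immediate. The flat $X_{\ell-1}$ is a modular coatom, so \eqref{eq:bc_splits} gives $\bc(\A) \cong \bc(\A_{X_{\ell-1}}) * \bc(\A)\mid_{[n]-[X_{\ell-1}]}$. The facets of $\bc(\A)$ are the nbc-bases of $\M(\A)$, of cardinality $\ell$, and the facets of $\bc(\A_{X_{\ell-1}})$ have cardinality $\ell-1$, so the facets of the remaining factor are singletons; a $0$-dimensional complex is vacuously flag. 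Since an element lying below a modular flat remains modular in the corresponding lower interval of $L(\A)$, the subarrangement $\A_{X_{\ell-1}}$ is again supersolvable, with modular chain $X_1 < \cdots < X_{\ell-1}$, so $\bc(\A_{X_{\ell-1}})$ is flag by the inductive hypothesis. Finally, a join $\Delta_1 * \Delta_2$ of flag complexes is flag: a minimal non-face cannot meet both vertex sets, for otherwise one of its two ``halves'' would already be a non-face, so it is a minimal non-face of $\Delta_1$ or of $\Delta_2$, hence has at most two vertices. This closes the induction.

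With $\bc(\A)$ flag, the Stanley--Reisner ideal $J_{\bc(\A)}$ is generated in degree $2$. Now fix any term order: by Theorem~\ref{thm:PS_grobner} the relations $\set{r_c}$ form a Gr\"obner basis of $I(\A)$ whose leading monomials generate $J_{\bc(\A)}$, and since the leading terms of a reduced Gr\"obner basis are exactly the minimal monomial generators of the initial ideal, the reduced Gr\"obner basis here consists of those $r_c$ with quadratic leading monomial — which, being homogeneous of degree $\lvert c\rvert-1$, are quadratic relations. Thus $\OT(\A) = S/I(\A)$ is defined by a quadratic Gr\"obner basis. Its initial algebra $S/J_{\bc(\A)}$ is then a quadratic monomial algebra, hence Koszul; since $\OT(\A)$ degenerates flatly to $S/J_{\bc(\A)}$ and the two have the same Hilbert series, Koszulness lifts to $\OT(\A)$ (see \cite{QAbook}).

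I do not expect a real obstacle here — the proof is an assembly of known pieces — but the step deserving care is the dimension bookkeeping in the join decomposition that forces the complementary factor to be $0$-dimensional, which is precisely where purity of the broken circuit complex and the fact that passing to a modular coatom drops the matroid rank by exactly one are used. (Alternatively, one could bypass Gr\"obner bases and induct directly on the module decomposition of Theorem~\ref{thm:modular_decomp}, using that a tensor product of Koszul $\K$-algebras is Koszul and that $\OT$ of a rank-$\le 2$ arrangement is visibly Koszul; but the flag-complex route is shorter and is the one the preceding discussion has set up.)
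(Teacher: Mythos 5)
Your proof is correct and follows essentially the same route as the paper: the Brylawski--Oxley join decomposition \eqref{eq:bc_splits} applied inductively along the modular chain shows $\bc(\A)$ is a flag complex, hence $J_{\bc(\A)}$ is quadratic, so $\OT(\A)$ has a quadratic Gr\"obner basis and is therefore Koszul. The alternative inductive argument you mention in passing (via Theorem~\ref{thm:2of3_Koszul} and the rank-$2$ case) is also noted by the paper as a second proof.
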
  
We will refine this result in Section~\ref{sec:apps}.

%%%%%%%%%%%%%%%%%%%
%%%%%%%%%%%%%%%%%%%

\section{An algebra factorization}
\label{sec:splits}
The goal of this section is to show that the Orlik-Terao algebra
$\OT(\A)$ is a split extension of $\OT(\A_X)$ if and only if $X$ is a modular flat of an arrangement $\A$, Theorem~\ref{thm:modular_decomp}.
The algebra decomposition comes from a reciprocal plane analogue of the Modular Fibration Theorem~\ref{thm:fibration}.  It depends on
the following combinatorial characterization of modularity: we show that the
(reduced) broken circuit complex of the complete principal truncation
is the subcomplex $\bc(\A)|_{[n]-[X]}$ exactly when $X$ is modular
(Theorem~\ref{thm:bc_modular}).

\subsection{Coordinate projections and intersections}
\label{ss:proj}
To resume the topic of \S\ref{ss:mod_fib}, suppose $I\subset[n]$
indexes a nonempty
subset of an arrangement $\A$, and let $p_I\colon\K^n\to\K^{I}$ be the induced coordinate projection.  Then $p_I$ restricts to a map of reciprocal
planes, $p_I|_{Y(\A)}\colon Y(\A)\to Y(\A_I)$.  The ring homomorphism $p^*_I\colon\OT(\A_I)\to\OT(\A)$ is obviously injective, so $p_I|_{Y(\A)}$ is dominant.

On the other hand, for $I\subseteq[n]$, let $\K^I\subseteq\K^n$ denote
the coordinate subspace supported on coordinates $I$, and $(\K^*)^I=\set{x\in\K^n\colon x_i\neq0\Leftrightarrow i\in I}$.
\begin{defn}\label{def:OT_0}
For any nonempty $I\subseteq[n]$, let $\OT_I(\A):=\OT(\A)/(y_i\colon i\in [n]-I)$.  Let $Y_I(\A)=\Spec \OT_I(\A)=Y(\A)\cap\K^{I}$,
the scheme-theoretic fibre over $0$ of the projection $p_{[n]-I}|_{Y(\A)}$.
If $I=[n]-\set{i}$ for some $i$, we will write $\OT_{H_i}(\A)$ in place of $\OT_{[n]-\set{i}}(\A)$, and call this the {\em relative Orlik-Terao algebra}.
\end{defn}

If $X$ is a flat of $\A$, Proudfoot and Speyer~\cite{PS06} showed that
$Y(\A_X)\cong Y_{[X]}(\A)$: in other words, the map $p^*_X$ is split by the homomorphism
\[
s_X(y_i)=
\begin{cases}
y_i&\text{if $H_i\leq X$;}\\
0 & \text{otherwise,}
\end{cases}
\]
for $1\leq i\leq n$.  Moreover, they showed $L(\A)$ indexes a stratification of $Y(\A)$, which leads to the following decomposition.
\begin{prop}
For any $I\subset[n]$,
\[
Y_I(\A)=\bigcup_{\substack{X\in L(\A):\\ I\subseteq[X]}} Y_{[X]}(\A)
\]
\end{prop}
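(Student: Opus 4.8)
The plan is to read the decomposition off the stratification of $Y(\A)$ by flats established in~\cite{PS06}, combined with the identity $Y_{[X]}(\A)=Y(\A)\cap\K^{[X]}$ recorded in Definition~\ref{def:OT_0} (so that the union runs over the flats $X$ with $\K^{[X]}\subseteq\K^{I}$, equivalently $[X]\subseteq I$). I will use two features of the stratification. First, $Y(\A)$ is the disjoint union, over $X\in L(\A)$, of the locally closed subvarieties $S_X:=Y(\A)\cap(\K^*)^{[X]}$, each of which is the Cremona image of $M(\A_X)$; in particular every point of $S_X$ has support exactly $[X]$. Second, $\overline{S_X}=Y_{[X]}(\A)\cong Y(\A_X)$, and since the flats of $\A_X$ are precisely the flats $W\le X$ of $\A$, this closure is again a union of strata, $Y_{[X]}(\A)=\bigsqcup_{W\le X}S_W$. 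An immediate consequence of the first point is that the support of any $y\in Y(\A)$ is a flat of $\M(\A)$: indeed $y$ lies in a unique stratum $S_X$, all of whose points have support $[X]$, so $\supp y=[X]$ and $X=\cl{\supp y}$.

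With this in hand, the equality of the underlying varieties is bookkeeping with supports. For $\supseteq$: each flat $X$ in the union has $\K^{[X]}\subseteq\K^{I}$, whence $Y_{[X]}(\A)=Y(\A)\cap\K^{[X]}\subseteq Y(\A)\cap\K^{I}=Y_I(\A)$. For $\subseteq$: given $y\in Y_I(\A)=Y(\A)\cap\K^{I}$, set $X=\cl{\supp y}$; then $\supp y=[X]$ by the observation above, and since $y\in\K^{I}$ this forces $\K^{[X]}\subseteq\K^{I}$, so $X$ indexes a term of the union and $y\in Y(\A)\cap\K^{[X]}=Y_{[X]}(\A)$. Equivalently, in stratified form, $Y_I(\A)=\bigsqcup_{X\,:\,\K^{[X]}\subseteq\K^{I}}S_X$, which agrees with $\bigcup_{X\,:\,\K^{[X]}\subseteq\K^{I}}\overline{S_X}$ because any stratum $S_W$ lying below such an $X$ again has support contained in $I$.

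To upgrade this to the stated identity of schemes, equivalently to the ideal identity $(y_i:\,i\notin I)=\bigcap_X(y_i:\,i\notin[X])$ in $\OT(\A)$, it suffices to know that $\OT_I(\A)$ is reduced: then $(y_i:\,i\notin I)$ is radical, its minimal primes are the defining ideals of the irreducible components of $Y_I(\A)$, which by the set-theoretic description are the maximal members among the $Y_{[X]}(\A)\cong Y(\A_X)$ occurring in the union (each irreducible, and reduced by~\cite{PS06}); and the non-maximal terms of the intersection are redundant, since $[X']\subseteq[X]$ gives $(y_i:\,i\notin[X'])\supseteq(y_i:\,i\notin[X])$. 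I expect the reducedness of $\OT_I(\A)$, for arbitrary (not necessarily closed) $I$, to be the one genuine obstacle. It does not follow from the Stanley-Reisner degeneration of Theorem~\ref{thm:PS_grobner}; in fact the analogue of the proposition already fails set-theoretically for $V(J_{\bc(\A)})$, because there the support of a point is only required to be an nbc-set rather than a flat. Instead one argues geometrically: $\OT(\A)$ is Cohen-Macaulay~\cite{PS06}, so $\OT_I(\A)$ has no embedded primes, and it remains to check generic reducedness along each component $Y(\A_X)$, which can be read from the local structure of $Y(\A)$ at the generic point of the stratum $S_X$, or simply quoted from the structural results of~\cite{PS06}.
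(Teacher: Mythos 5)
Your core argument is correct and is essentially the paper's: both proofs rest on the Proudfoot--Speyer stratification, i.e.\ on the fact that the open stratum $Y(\A)\cap(\K^*)^{J}$ is empty unless $J=[X]$ for some flat $X$, so that $Y_I(\A)=\bigsqcup_{[X]\subseteq I}S_X\subseteq\bigcup_{[X]\subseteq I}Y_{[X]}(\A)$, while the reverse inclusion is immediate from $\K^{[X]}\subseteq\K^{I}$. (You are also right to read the index set as $[X]\subseteq I$; the displayed condition $I\subseteq[X]$ is a typo, as both the paper's own proof and the ensuing corollary for $Y_H(\A)$ confirm.) The paper compresses the remaining verification into the phrase ``a localization argument,'' whereas you spell out the support bookkeeping; up to that point the two arguments coincide.

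The scheme-theoretic addendum is where your write-up has a genuine flaw. First, ``$\OT(\A)$ is Cohen--Macaulay, so $\OT_I(\A)$ has no embedded primes'' is a non sequitur: the variables $y_i$, $i\notin I$, need not form a regular sequence on $\OT(\A)$ (already for a rank-$2$ arrangement of three lines and $I=\set{1}$, one cuts a surface by two linear forms and obtains a line), and a quotient of a Cohen--Macaulay ring by a non-regular sequence can acquire embedded primes; nothing you say rules this out, and the ``generic reducedness along each component'' step is likewise only gestured at. Second, your dismissal of the Gr\"obner route is backwards: reducedness of $\OT_I(\A)$ is exactly what the degeneration \emph{does} give. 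Reorder the hyperplanes so that $[n]-I$ comes first (legitimate, since the Gr\"obner basis of Theorem~\ref{thm:PS_grobner} is universal); the commutation $\In\bigl((y_i\colon i\notin I)+I(\A)\bigr)=(y_i\colon i\notin I)+\In(I(\A))$, proved in the paper as Proposition~\ref{prop:bc_subcomplex} by an argument independent of the present statement, exhibits a squarefree monomial initial ideal, and an ideal with squarefree initial ideal is radical. What the degeneration does \emph{not} give---and what you correctly import from the stratification instead---is the indexing of the components by flats rather than by nbc-sets: your observation that the facets of $\bc(\A)|_I$ are merely nbc-sets is a statement about the component structure of the special fibre, not about reducedness of $\OT_I(\A)$, and conflating the two is what leads you to reject the one argument that actually closes the gap.
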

\begin{proof}
If $I\subseteq J$, the identity map on $\K^n$ restricts to a
map $Y_I\to Y_J$, hence to
\[
\bigcup_{\substack{X\in L(\A):\\ I\subseteq[X]}} Y_{[X]}(\A)
\to
Y_I(\A).
\]
By \cite{PS06}, the open stratum $Y^\circ_I(\A):= Y(\A)\cap(\K^*)^{I}$ is empty, unless $I=[X]$ for some $X\in L(\A)$, so a localization argument shows the map above is in fact an isomorphism.
\end{proof}
Now we restrict our attention to the case in which $\abs{I}=n-1$.
\begin{cor}
For any $H\in\A$,
\[
Y_H(\A)\cong\bigcup_{\substack{X\in L(\A):\\ H\not\leq X}}Y(\A_X).
\]
\end{cor}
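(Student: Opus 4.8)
The plan is to deduce this corollary from the Proposition immediately preceding it, applied to the special case $I=[n]-\set{i}$ where $H=H_i$. First I would observe that $Y_H(\A)=Y_{[n]-\set{i}}(\A)$ by the definition of the relative Orlik-Terao algebra (Definition~\ref{def:OT_0}), so the Proposition gives
\[
Y_H(\A)=\bigcup_{\substack{X\in L(\A):\\ [n]-\set{i}\subseteq[X]}}Y_{[X]}(\A).
\]
The next step is to translate the indexing condition. Since $[X]\subseteq[n]$ always, the inclusion $[n]-\set{i}\subseteq[X]$ forces $[X]$ to be either $[n]-\set{i}$ or all of $[n]$; the latter would mean $X=\hat 0$ is the ambient space $V$, which is the flat of rank zero, and then $\A_X$ is empty so $Y(\A_X)$ is a point. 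It is cleaner to note instead that $[n]-\set{i}\subseteq[X]$ is equivalent to $i\notin[X]$ \emph{or} $[X]=[n]$, i.e.\ (setting aside the trivial top case) to $H_i\not\leq X$. Thus the index set $\set{X\in L(\A):[n]-\set{i}\subseteq[X]}$ is precisely $\set{X\in L(\A): H\not\leq X}$ together with possibly the ambient flat $V$; and for any such $X$ we have $Y_{[X]}(\A)\cong Y(\A_X)$ by the Proudfoot--Speyer result ($Y(\A_X)\cong Y_{[X]}(\A)$, split by $s_X$) quoted in the paragraph before the Proposition. Substituting this isomorphism termwise yields the stated union.

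The one subtlety I would handle carefully is the role of the ambient flat $X=V$ (the bottom of $L(\A)$, of rank $0$), for which $[X]=\emptyset$ in one convention or $[X]=[n]$ depending on how one reads "$[X]:=\set{i\colon H_i\leq X}$" — here $H_i\leq X$ means $H_i\supseteq X$, so with $X=V$ we get $[X]=\emptyset$, and the condition $[n]-\set{i}\subseteq[X]$ fails for $n\geq 2$. So in fact no degenerate case arises: the only flats $X$ with $[n]-\set{i}\subseteq[X]$ are those with $[X]=[n]$ (i.e.\ $\A_X=\A$, relevant only if $[n]$ is a flat, meaning $\A$ has no parallel-type dependencies, which need not occur) and those with $[X]=[n]-\set{i}$ exactly. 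Rather than enumerate, I would simply record the equivalence $[n]-\set{i}\subseteq[X]\iff H\not\leq X$ (valid for $X$ a proper flat, with the improper case absorbed since $Y(\A)$ itself is $Y_{[n]}(\A)$ and $H\not\leq$ no flat gives it) and cite the preceding Proposition plus the Proudfoot--Speyer splitting.

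The main obstacle — really the only thing requiring attention — is getting the index-set bookkeeping exactly right, since the two conditions "$I\subseteq[X]$" and "$H\not\leq X$" are related by complementation and a careless reader (or author) can flip an inclusion; everything else is a direct substitution of the isomorphism $Y_{[X]}(\A)\cong Y(\A_X)$ into the union supplied by the Proposition. No new geometric input is needed beyond what has already been established.
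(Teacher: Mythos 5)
Your overall strategy---specialize the preceding Proposition to $I=[n]-\set{i}$ and substitute the Proudfoot--Speyer isomorphism $Y_{[X]}(\A)\cong Y(\A_X)$---is exactly what the paper intends (it offers no separate argument for this Corollary). But the index-set translation, which is the entire content of the deduction, is wrong as written. You assert that $[n]-\set{i}\subseteq[X]$ is equivalent to ``$i\notin[X]$ or $[X]=[n]$.'' This is false: for $n\geq 3$ take $X=H_j$ with $j\neq i$, so $[X]=\set{j}$; then $i\notin[X]$ but $[n]-\set{i}\not\subseteq\set{j}$. The conditions $[n]-\set{i}\subseteq[X]$ and $i\notin[X]$ (the latter being exactly $H\not\leq X$, since $H_i\leq X$ means $i\in[X]$) are related by \emph{reversing} the inclusion, not by discarding a degenerate case, and your own closing remark---that the only flats with $[n]-\set{i}\subseteq[X]$ are those with $[X]=[n]-\set{i}$ or $[X]=[n]$---contradicts the equivalence you rely on.

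The correct diagnosis is that the Proposition's displayed condition is itself a misprint for $[X]\subseteq I$. Its proof needs the inclusions $Y_{[X]}(\A)\subseteq Y_I(\A)$, which hold precisely when $[X]\subseteq I$ (the proof's own observation is that $I\subseteq J$ gives a map $Y_I\to Y_J$); and with the condition as literally printed, the union for $I=[n]-\set{i}$ would contain $Y_{[n]}(\A)=Y(\A)$ (the center of the essential arrangement is a flat with $[X]=[n]$), forcing $Y_H(\A)=Y(\A)$, which is absurd since $Y_H(\A)$ is the proper hyperplane section $y_i=0$ of the irreducible variety $Y(\A)$. Reading the Proposition with $[X]\subseteq I$, the specialization $I=[n]-\set{i}$ yields the union over flats with $[X]\subseteq[n]-\set{i}$, i.e.\ $i\notin[X]$, i.e.\ $H\not\leq X$, and the termwise substitution $Y_{[X]}(\A)\cong Y(\A_X)$ finishes the proof. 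So your geometric inputs are the right ones; the defect is that you patched the bookkeeping with a false equivalence instead of identifying and correcting the flipped inclusion you yourself suspected.
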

The two parts of Theorem~\ref{thm:PS_grobner} have counterparts.
\begin{prop}\label{prop:relative_relns}
For any hyperplane $H_i\in\A$, the kernel of the natural map
\[
\K[y_j\colon j\in[n],j\neq i ]\to \OT_{H_i}(\A)
\]
is generated by elements $\bar{r}_c$ indexed by circuits $c$ of $\A$:
\begin{equation}\label{eq:relOTrels}
\bar{r}_c:=\begin{cases}
\sum_{j\in [c]}c_j \prod_{k\in[c]-\set{j}} y_k &
\text{if $i\not\in [c]$;}\\
\prod_{j\in [c]-\set{i}} y_j & \text{if $i\in [c]$.}
\end{cases}
\end{equation}
\end{prop}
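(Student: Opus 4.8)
The plan is to deduce this directly from Theorem~\ref{thm:PS_grobner} by eliminating the variable $y_i$ from the presentation of $\OT(\A)$. By Definition~\ref{def:OT_0} we have $\OT_{H_i}(\A)=\OT(\A)/(y_i)$, so Theorem~\ref{thm:PS_grobner} yields the presentation
\[
\OT_{H_i}(\A)\cong\K[y_1,\ldots,y_n]/\bigl(I(\A)+(y_i)\bigr),\qquad I(\A)=\bigl(r_c\colon\text{$c$ a circuit of $\A$}\bigr).
\]
First I would introduce the ring epimorphism $\phi\colon\K[y_1,\ldots,y_n]\to\K[y_j\colon j\neq i]$ with $\phi(y_i)=0$ and $\phi(y_j)=y_j$ for $j\neq i$, and write $\psi\colon\K[y_j\colon j\neq i]\to\OT_{H_i}(\A)$ for the map in the statement, i.e.\ the inclusion $\K[y_j\colon j\neq i]\hookrightarrow\K[y_1,\ldots,y_n]$ followed by the quotient map, so that $g\in\ker\psi$ if and only if $g\in I(\A)+(y_i)$.

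The \emph{key step} is the identity $\ker\psi=\phi(I(\A))$. For the inclusion $\phi(I(\A))\subseteq\ker\psi$, note that for any $h$ one has $h-\phi(h)\in(y_i)$, so $\phi(h)=h-(h-\phi(h))\in I(\A)+(y_i)$ whenever $h\in I(\A)$, and $\phi(h)\in\K[y_j\colon j\neq i]$. Conversely, if $g\in\K[y_j\colon j\neq i]$ lies in $I(\A)+(y_i)$, then applying $\phi$ — which fixes $g$ and annihilates $(y_i)$ — gives $g=\phi(g)\in\phi(I(\A))$. Since $\phi$ is surjective, $\phi(I(\A))=\bigl(\phi(r_c)\colon\text{$c$ a circuit of $\A$}\bigr)$, and it remains only to compute $\phi(r_c)$. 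Writing $r_c=\sum_{a\in[c]}c_a\prod_{b\in[c]-\set{a}}y_b$ as in \eqref{eq:OTrels}: if $i\notin[c]$ then $\phi$ fixes every monomial occurring in $r_c$, so $\phi(r_c)=r_c=\bar r_c$; if $i\in[c]$ then every summand with $a\neq i$ is divisible by $y_i$ and dies under $\phi$, while the summand for $a=i$ is $c_i\prod_{b\in[c]-\set{i}}y_b$ and is fixed, so $\phi(r_c)=c_i\prod_{b\in[c]-\set{i}}y_b$, which generates the same ideal as $\bar r_c=\prod_{b\in[c]-\set{i}}y_b$ since $c_i\neq0$. Comparing with \eqref{eq:relOTrels} completes the argument.

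I do not expect a genuine obstacle: this proposition is essentially the ``relative'' shadow of the generation statement in Theorem~\ref{thm:PS_grobner}, and all the substance is packed into the elementary elimination identity $\ker\psi=\phi(I(\A))$, which is precisely what ensures the scheme-theoretic fibre over zero — not merely its reduction — is cut out by the substituted relations. The case analysis on circuits is routine bookkeeping; the only mild point worth flagging in the write-up is the harmless rescaling by the nonzero scalar $c_i$ when $i\in[c]$, which identifies $\phi(r_c)$ with the monic monomial $\bar r_c$ up to a unit.
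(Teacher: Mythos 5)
Your argument is correct and is precisely the intended one: the paper states this proposition without proof, and the implicit justification (visible in the proof of Proposition~\ref{prop:bc_subcomplex}, which starts from $\OT_I(\A)\cong\K[y_1,\ldots,y_n]/((y_1,\ldots,y_{k-1})+I(\A))$) is exactly your elimination of $y_i$ from the Proudfoot--Speyer presentation of Theorem~\ref{thm:PS_grobner}. The identity $\ker\psi=\phi(I(\A))$ and the case analysis on whether $i\in[c]$, including the rescaling by the nonzero coefficient $c_i$, are all handled correctly.
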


\begin{prop}\label{prop:bc_subcomplex}
Let $\A$ be an ordered arrangement of hyperplanes.  If we have
$I=\set{k,k+1,\ldots,n}$
for some integer $k$ with $1\leq k\leq n$, then with respect to 
lexicographic order,
\[
\In \OT_I(\A)\cong \K[y_1,\ldots,y_n]/J_{\bc(\A)|_{I}},
\]
where $J_{\bc(\A)|_{I}}$ denotes the Stanley-Reisner ideal of
the subcomplex of $\bc(\A)$ supported on vertices $I$.
\end{prop}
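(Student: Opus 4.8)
The plan is to combine Theorem~\ref{thm:PS_grobner} with an analysis of how the standard monomial ($\mathrm{nbc}$) basis of $\OT(\A)$ behaves under the coordinate quotient. Write $W=[n]-I=\set{1,\ldots,k-1}$, so that $\OT_I(\A)=S/(I(\A)+J_W)$ with $J_W=(y_i\colon i\in W)$, and let $\prec$ be the lexicographic order, so that, as recorded in the second part of Theorem~\ref{thm:PS_grobner}, $\In_\prec I(\A)=J_{\bc(\A)}$ and $\In_\prec r_c$ is the squarefree broken-circuit monomial $y_{B_C}$, $B_C=C-\set{\min C}$, $C=[c]$.

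First I would reduce the universal Gr\"obner basis modulo $J_W$. Since $W$ is an \emph{initial} segment of $[n]$, a circuit $c$ that meets $W$ has $\min C\in W$; hence $\bar r_c=r_c$ if $C\subseteq I$, while $\bar r_c$ is a unit multiple of the monomial $y_{B_C}$ (and then $B_C\subseteq I$) if $\abs{C\cap W}=1$, and $\bar r_c=0$ if $\abs{C\cap W}\ge 2$. In each surviving case $\In_\prec\bar r_c=y_{B_C}$ with $B_C\subseteq I$. The broken circuits contained in $I$ are precisely the $B_C$ with $\abs{C\cap W}\le 1$, while any $B_C$ with $\abs{C\cap W}\ge 2$ lies in $(y_i\colon i\in W)$; so the leading terms of $\set{\bar r_c}\cup\set{y_i\colon i\in W}$ generate $J_{\bc(\A)|_I}=J_{\bc(\A)}+(y_i\colon i\in W)$. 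This yields the inclusion $J_{\bc(\A)|_I}\subseteq\In_\prec(I(\A)+J_W)$, and it remains to prove equality.

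For this I would argue on Hilbert functions. By Theorem~\ref{thm:PS_grobner} the $\mathrm{nbc}$-monomials form a $\K$-basis of $\OT(\A)$; grouping them by support gives a direct-sum decomposition $\OT(\A)=\bigoplus_F\OT(\A)_F$ of graded vector spaces, where $\OT(\A)_F$ is spanned by the $\mathrm{nbc}$-monomials with support $F$ and $F$ runs over the $\mathrm{nbc}$-sets. The crux is the identity
\[
J_W\cdot\OT(\A)=\bigoplus_{F\cap W\neq\emptyset}\OT(\A)_F .
\]
The inclusion $\supseteq$ is immediate, since $m=y_i\cdot(m/y_i)$ whenever $i\in\supp(m)\cap W$. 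For $\subseteq$ it suffices to show that the normal form of a monomial $y^a$ with $\supp(a)\cap W\neq\emptyset$, computed against the Gr\"obner basis $\set{r_c}$, is a combination of $\mathrm{nbc}$-monomials whose supports all meet $W$; I would prove this by induction on the number of reduction steps. In a single step a monomial $y^b$ with $i\in\supp(b)\cap W$, divisible by $y_{B_C}$, is replaced by the monomials $\pm\,y^b\,y_{C-\set{j}}/y_{B_C}$ for $j\in C-\set{\min C}$: if $i\notin B_C$ the factor $y_i$ survives the division, while if $i\in B_C$ then $\min C<i\le k-1$, so $\min C\in W$, and $\min C\in C-\set{j}$, so the new monomials meet $W$ all the same. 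This is the one place the hypothesis $I=\set{k,\ldots,n}$ enters: it makes $W$ downward closed, which is exactly what pairs with the ``least element'' in the definition of a broken circuit. Granting the identity, $\OT_I(\A)=\OT(\A)/J_W\OT(\A)\cong\bigoplus_{F\subseteq I}\OT(\A)_F$, whose graded $\K$-basis is the set of $\mathrm{nbc}$-monomials supported in $I$, i.e.\ the monomials not in $J_{\bc(\A)|_I}$. Hence $\dim_\K\OT_I(\A)_d=\dim_\K\bigl(S/J_{\bc(\A)|_I}\bigr)_d$ for every $d$; since this also equals $\dim_\K\bigl(S/\In_\prec(I(\A)+J_W)\bigr)_d$ and $\In_\prec(I(\A)+J_W)\supseteq J_{\bc(\A)|_I}$, the two ideals coincide, which is the assertion.

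I expect the identity for $J_W\cdot\OT(\A)$ to be the main obstacle: the rest is bookkeeping, but this is a genuine statement about how the reductions defining $\OT(\A)$ interact with the ordering of the coordinates, and it fails for a general index set $I$. As an alternative one could try to verify Buchberger's criterion for $\set{\bar r_c}\cup\set{y_i\colon i\in W}$ directly: $S$-pairs involving a variable $y_i$ have coprime leading terms; $S$-pairs among the polynomial generators $r_c$ with $C\subseteq I$ reduce to zero because these form the universal Gr\"obner basis of $I(\A_I)=I(\A)\cap\K[y_i\colon i\in I]$ (Theorem~\ref{thm:PS_grobner} applied to the deletion $\A_I$, whose circuits are the circuits of $\A$ contained in $I$); but the remaining $S$-pairs, between an $r_c$ and a monomial $\bar r_{c'}$, seem to require essentially the same combinatorial input.
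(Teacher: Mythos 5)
Your proof is correct, and while it opens the same way as the paper's --- reducing via Theorem~\ref{thm:PS_grobner} to a comparison of $\In(I(\A)+J_W)$ with $J_W+\In(I(\A))=J_{\bc(\A)|_I}$, where $J_W=(y_1,\ldots,y_{k-1})$ --- it handles the hard inclusion by a genuinely different route. The paper disposes of it with a short leading-term argument: writing $f=\sum_{i<k}y_ig_i+h$ with $h\in I(\A)$, it claims that either $\lead(f)\in J_W$ or $\lead(f)=\lead(h)\in\In(I(\A))$. You instead establish the easy inclusion $J_{\bc(\A)|_I}\subseteq\In(I(\A)+J_W)$ from the reduced generators and then force equality by a Hilbert-function count, whose engine is the identity $J_W\cdot\OT(\A)=\bigoplus_{F\cap W\neq\emptyset}\OT(\A)_F$, proved by tracking supports through \std\ reduction. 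Your route is longer, but it isolates exactly the point where the hypothesis that $I$ is a final segment is used: when $y_i\mid\lead(r_c)=y_{B_C}$ for some $i\in W$, the entire tail of $r_c$ is divisible by $y_{\min C}$ with $\min C\in W$. This is genuine content rather than bookkeeping: the identity $\In(I+J_W)=\In(I)+J_W$ is not a formal consequence of lex order alone (already $I=(y_1-y_2)$, $W=\set{1}$ is a counterexample, where $f=y_2$, $h=-(y_1-y_2)$ gives $\lead(f)\neq\lead(h)$), so the paper's two-line argument is implicitly leaning on the same structural feature of the relations $r_c$ that your normal-form lemma --- and the Buchberger-criterion alternative you sketch, in which $S(y_i,r_c)$ is a scalar multiple of the tail of $r_c$ and reduces to zero against $y_{\min C}$ --- makes explicit. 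What the paper's approach buys is brevity; what yours buys is a self-contained verification that the reduced relations $\bar r_c$ together with the variables $y_i$, $i\in W$, really do form a \std\ basis for $I(\A)+J_W$.
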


\begin{proof}
Since $\OT_I(\A)\cong \K[y_1,\ldots,y_n]/((y_1,\ldots,y_{k-1})+I(\A))$, by Theorem~\ref{thm:PS_grobner} it is enough to show that
\[
\In((y_1,\ldots,y_{k-1})+I(\A))=(y_1,\ldots,y_{k-1})+\In(I(\A)).
\]
The inclusion $\supseteq$ is immediate.  In the other direction, suppose $f\in (y_1,\ldots,y_{k-1})+I(\A)$.  We need to show that $\lead(f)\in (y_1,\ldots,y_{k-1})+\In(I(\A))$, where $\lead$ denotes the leading term in
lexicographic order.

Write $f=y_1g_1+\cdots+y_{k-1}g_{k-1}+h$ for some polynomials $g_i,h$, where $h\in I(\A)$.
If $h=0$, then $\lead(f)\in (y_1,\ldots,y_{k-1})$, and our claim is clear.  Otherwise, $f$ and $h$ have the same degree. Since the variables $y_1,\ldots,y_{k-1}$ are first in order, monomials of a fixed degree that are are in $(y_1,\ldots,y_{k-1})$ come before those that are not. It follows that $\lead(f)=\lead(h)$.  Since $h\in I(\A)$, the claim is shown.
\end{proof}

Since $\bc(\A)$ is a cone over $\bc_0(\A)$ with vertex $1$, by \cite{Bry77}, we obtain a relative version of Theorem~\ref{thm:PS_grobner}.
\begin{cor}\label{cor:relative_nbc}
If $H$ is first in order in an arrangement $\A$, then
\[
\In \OT_H(\A)\cong \K[y_1,\ldots,y_n]/J_{\bc_0(\A)},
\]
where $J_{\bc_0(\A)}$ is the Stanley-Reisner ideal of the reduced broken circuit complex.
\end{cor}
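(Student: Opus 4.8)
The plan is to obtain the corollary directly from Proposition~\ref{prop:bc_subcomplex}, together with the fact (due to \cite{Bry77}, and recalled just before the statement) that $\bc(\A)$ is the cone with apex the least vertex over $\bc_0(\A)$.

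First I would unwind the definitions: since $H$ is first in order, $H = H_1$, so $\OT_H(\A) = \OT_{[n]-\{1\}}(\A)$ and the relevant index set is $I = [n] - \{1\} = \{2, 3, \ldots, n\}$. Crucially, this set has exactly the contiguous form $\{k, k+1, \ldots, n\}$ (with $k = 2$) required to invoke Proposition~\ref{prop:bc_subcomplex} --- this is the one place where the hypothesis that $H$ is \emph{first} enters. That proposition then gives, with respect to lexicographic order,
\[
\In \OT_H(\A) \cong \K[y_1, \ldots, y_n]/J_{\bc(\A)|_{\{2,\ldots,n\}}},
\]
so it only remains to identify $\bc(\A)|_{\{2,\ldots,n\}}$ with $\bc_0(\A)$.

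For that identification I would argue that the least vertex $1$ lies in no broken circuit: a broken circuit has the form $C - \{i\}$ with $i = \min C$, and either $1 \notin C$, whence $1 \notin C - \{i\}$, or $1 \in C$, whence $1 = \min C = i$ and again $1 \notin C - \{i\}$. Therefore a subset $S \subseteq \{2,\ldots,n\}$ contains a broken circuit, regarded inside $[n]$, if and only if it does as a subset of $[n] - \{1\}$; hence $\bc(\A)|_{\{2,\ldots,n\}}$ consists precisely of the broken-circuit-free subsets of $[n] - \{1\}$, which is the definition of $\bc_0(\A)$. (Equivalently, this is the statement $\bc(\A) = \{1\} * \bc_0(\A)$ of \cite{Bry77}.) Since $\bc(\A)|_{\{2,\ldots,n\}}$ and $\bc_0(\A)$ are then the same complex on the same vertex set $\{2,\ldots,n\} \subset [n]$, their Stanley--Reisner ideals inside the full ring $\K[y_1,\ldots,y_n]$ coincide --- both contain $y_1$, recording the absent vertex --- and the corollary follows.

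I do not anticipate a genuine obstacle: all of the substance already sits in Theorem~\ref{thm:PS_grobner} and Proposition~\ref{prop:bc_subcomplex}. The closest thing to a subtlety, and it is a minor one, is to check that $[n] - \{1\}$ really has the contiguous form demanded by Proposition~\ref{prop:bc_subcomplex} (it does, exactly because $H_1$ is the first hyperplane) and to keep the comparison of Stanley--Reisner ideals inside the common polynomial ring $\K[y_1,\ldots,y_n]$ rather than in $\K[y_2,\ldots,y_n]$.
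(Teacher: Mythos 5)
Your proposal is correct and follows the same route as the paper: the paper likewise deduces the corollary from Proposition~\ref{prop:bc_subcomplex} applied to $I=\{2,\ldots,n\}$ together with the fact (cited from \cite{Bry77}) that $\bc(\A)$ is a cone over $\bc_0(\A)$ with apex the least vertex. Your explicit verification that vertex $1$ lies in no broken circuit is exactly the content of that cone statement, which the paper leaves as a citation.
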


The Hilbert series formula \eqref{eq:Terao_theorem} also has a
relative version.  For any arrangement, the Poincar\'e polynomial
$\pi(\A,t)$ is divisible by $1+t$, and the polynomial
$\pi(\P\A,t):=\pi(\A,t)/(1+t)$ enumerates the Betti numbers of the
projective complement $\P M(\A)$ if $\A$ is a complex arrangement.
\begin{prop}\label{prop:relative_hs}
If $H$ is a hyperplane of an arrangement $\A$, then $\OT_H(\A)$ is
Cohen-Macaulay, and
\[
h(\OT_H(\A),t)=\pi(\P\A,t/(1-t)).
\]
\end{prop}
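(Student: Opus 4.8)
The plan is to prove Proposition~\ref{prop:relative_hs} by combining the Cohen--Macaulayness of $\OT(\A)$ (from \cite{PS06}) with a dimension count, and then to extract the Hilbert series from the Gr\"obner degeneration of Corollary~\ref{cor:relative_nbc}. First I would reorder the hyperplanes so that $H=H_1$ is first, which is harmless since all constructions in sight are order-independent up to isomorphism. Then $\OT_H(\A)=\OT(\A)/(y_1)$. The key point for Cohen--Macaulayness is that $y_1$ is a nonzerodivisor on $\OT(\A)$: this follows because the initial ideal $\In(I(\A))=J_{\bc(\A)}$ is the Stanley--Reisner ideal of a complex which is a cone with apex the vertex $1$ (by \cite{Bry77}), so $y_1$ is not a zerodivisor on $\K[y]/J_{\bc(\A)}$, and by the standard semicontinuity/flatness argument for Gr\"obner degenerations, a nonzerodivisor on the initial algebra lifts to a nonzerodivisor on $\OT(\A)$ itself. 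Since $\OT(\A)$ is Cohen--Macaulay of dimension $\ell$ (equal to $\rk\A$, as one reads off from \eqref{eq:Terao_theorem}), the quotient $\OT_H(\A)=\OT(\A)/(y_1)$ is Cohen--Macaulay of dimension $\ell-1$.

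For the Hilbert series, I would argue that passing to the initial ideal preserves Hilbert series, so
\[
h(\OT_H(\A),t)=h\bigl(\K[y_1,\ldots,y_n]/J_{\bc_0(\A)},t\bigr)
\]
by Corollary~\ref{cor:relative_nbc}. The Hilbert series of a Stanley--Reisner ring is determined by the $f$-vector of the complex, and for a shellable (in particular, a broken-circuit) complex this is governed by the $h$-vector. Concretely, the reduced broken circuit complex $\bc_0(\A)$ is pure of dimension $\ell-1$ with $h$-polynomial equal to the characteristic-polynomial data of the matroid; the standard identity (see \cite{Bj94}) gives
\[
h\bigl(\K[y]/J_{\bc_0(\A)},t\bigr)=\frac{\sum_i h_i\, t^i}{(1-t)^{\ell-1}},
\]
where $\sum_i h_i t^i$ is the $h$-polynomial of $\bc_0(\A)$. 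The combinatorial fact I need is that this $h$-polynomial equals $\pi(\P\A,t)=\pi(\A,t)/(1+t)$ after the substitution matching the one in Terao's formula; equivalently, that the relation between $h(\OT(\A),t)=\pi(\A,t/(1-t))$ and the reduced complex is exactly division by $(1+t)$-worth of a cone vertex, i.e. $h(\OT(\A),t)=\tfrac{1}{1-t}\,h(\OT_H(\A),t)$ because $\bc(\A)$ is a cone over $\bc_0(\A)$. Chasing this through yields $h(\OT_H(\A),t)=(1-t)\,h(\OT(\A),t)=(1-t)\,\pi(\A,t/(1-t))$, and since $\pi(\A,s)=(1+s)\pi(\P\A,s)$ with $s=t/(1-t)$ we have $1+s=1/(1-t)$, so $(1-t)\pi(\A,t/(1-t))=\pi(\P\A,t/(1-t))$, as claimed.

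I expect the main obstacle to be the bookkeeping in the last identity: one must be careful that ``$\bc(\A)$ is a cone over $\bc_0(\A)$'' translates into the clean relation $h(\OT(\A),t)=h(\OT_H(\A),t)/(1-t)$ between Hilbert series, and that the apex vertex is genuinely the one being killed when $H$ is ordered first. This is essentially a cone lemma for Stanley--Reisner rings: if $\Delta$ is a cone over $\Delta_0$ with apex $v$, then $\K[\Delta]\cong\K[\Delta_0][x_v]$, so its Hilbert series is that of $\K[\Delta_0]$ divided by $(1-t)$, and setting $y_v=0$ recovers $\K[\Delta_0]$. Combined with the order-independence established by Theorem~\ref{thm:PS_grobner} (the relations form a \emph{universal} Gr\"obner basis), choosing $H$ first makes $y_1$ the apex variable, so $\In\OT_H(\A)\cong\K[y]/J_{\bc_0(\A)}$ and the cone lemma applies. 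The Cohen--Macaulay half is comparatively routine once the nonzerodivisor claim is in hand; the only subtlety there is invoking the correct form of the lifting statement for regular sequences under Gr\"obner degeneration, which is standard.
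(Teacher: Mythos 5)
Your proof is correct, and its skeleton matches the paper's: exhibit $y_1$ as a nonzerodivisor, conclude Cohen--Macaulayness of the quotient from that of $\OT(\A)$, and obtain the Hilbert series from Terao's formula \eqref{eq:Terao_theorem} via the identity $1+t/(1-t)=1/(1-t)$. The difference is in how the nonzerodivisor claim is justified. The paper's proof is essentially two lines: $\OT(\A)$ is by definition a subalgebra of the field $\K(V)$, hence a domain, so every nonzero $y_i$ is automatically a nonzerodivisor; the exact sequence \eqref{eqn:multbyy_i} then gives both Cohen--Macaulayness and $h(\OT_H(\A),t)=(1-t)\,h(\OT(\A),t)$ at once. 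You instead pass to the initial algebra, use Brylawski's cone structure of $\bc(\A)$ over $\bc_0(\A)$ to see that $y_1$ is a nonzerodivisor on $\K[y]/J_{\bc(\A)}$, and lift back via the standard Gr\"obner degeneration argument; you then recover the factor $(1-t)$ from the cone lemma for Stanley--Reisner rings rather than from the exact sequence. This is valid (the lifting step is the usual one: a relation $y_1g\in I(\A)$ with $g$ reduced would give $y_1\In(g)\in\In(I(\A))$), but it is considerably heavier machinery than necessary, and your digression about $h$-vectors of shellable complexes is a red herring that you ultimately bypass. The one genuine advantage of your route is that it makes visible, already at this stage, the identification of $\In\OT_H(\A)$ with the reduced broken circuit complex that the paper exploits later; but for the proposition itself you should notice that the domain property short-circuits everything.
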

\begin{proof}
Since $\OT(\A)$ is a domain, the sequence
\begin{equation}\label{eqn:multbyy_i}
\xymatrix{
0\ar[r] & \OT(\A)[-1]\ar[r]^-{\cdot y_i} & \OT(\A)\ar[r] & \OT_H(\A)\ar[r] & 0
}
\end{equation}
is exact, where $H=H_i$.  The first result then follows from the Cohen-Macaulay property for
$\OT(\A)$ from \cite{PS06}, and the second from
\eqref{eq:Terao_theorem}.
\end{proof}
\begin{example}\label{ex:rel_rank2}
Suppose $\A$ is an arrangement of rank $2$.  Every $3$-element set $C\subseteq{2,\ldots,n}$ is a circuit, and
\[
\OT_{H_1}(\A)\cong\K[y_2,\ldots,y_n]/(y_iy_j\colon 2\leq i<j\leq n),
\]
using \eqref{eq:relOTrels} from Proposition~\ref{prop:relative_relns},
and
\[
h(\OT_{H_1}(\A),t)=1+(n-1)t/(1-t).\qedhere
\]
\end{example}

\subsection{Coordinate intersections and modular flats}
In this section, we compare the schemes $Y_{[n]-[X]}(\A)$ and $Y_X(\A_{v,X})$.  Recall $X$ is a hyperplane of the fibre arrangement
$\A_{v,X}$ (Definition~\ref{def:fibre arrangement}): the ambient affine space of both schemes, then, may be identified with $\K^{[n]-[X]}$.  Now we show that, for generic $v$, we have
$Y_{[n]-[X]}(\A)\cong Y_{X}(\A_{v,X})$ if and only if $X$ is a modular flat.  Our comparison has the following purely combinatorial foundation.  By reordering the hyperplanes of $\A$ if necessary, we will often assume that $[X]=\set{1,2,\ldots,n_X}$, a condition which we shall abbreviate by saying {\em $X$ is an initial flat of $\A$}.

\begin{thm}\label{thm:bc_modular}
Suppose $X$ is an initial flat of $\A$.  Then $\bc_0(\T_X(\A))$ is a
subcomplex of $\bc(\A)|_{[n]-[X]}$.  The two complexes are
equal if and only if $X$ is modular.
\end{thm}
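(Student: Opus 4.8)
The plan is to prove the inclusion for an arbitrary flat (no modularity needed) and then pin down when it is an equality. Order the ground set $\{0\}\cup([n]-[X])$ of $\T_X(\M(\A))$ so that $0$ is least and the given order of $[n]$ is induced on $[n]-[X]$; by Definition~\ref{def:TXA} the arrangement $\T_X(\A)=\A_{\eta,X}$ realizes $\T_X(\M(\A))$, so I may speak of circuits and broken circuits of $\T_X(\A)$. The technical input is a \emph{dependency transfer}: if $C$ is a circuit of $\M(\A)$ with $C\cap[X]\subseteq\{\min C\}$, set $\widehat C=(C-[X])\cup\{0\}$ when $C\cap[X]\neq\emptyset$ and $\widehat C=C$ otherwise; then $\widehat C$ is dependent in $\T_X(\A)$, hence contains a circuit $C'$ of $\T_X(\A)$. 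To see this, restrict the relation $\sum_{i\in C}c_if_i=0$ along the homomorphism $\K[V/X]\to\KK[t]$ used to build $\A_{\eta,X}$ in Definition~\ref{def:TXA}: each $f_i$ with $i\in[X]$ descends to a nonzero linear form on $V/X$ and maps to $a_it$ with $a_i\in\KK$ nonzero, while each $f_i$ with $i\in C-[X]$ maps, after homogenizing, to the defining form of the fibre hyperplane indexed by $i$. The relation becomes $\sum_{i\in C-[X]}c_i\tilde f_i+\bigl(\sum_{i\in C\cap[X]}c_ia_i\bigr)t=0$, a nontrivial dependency among the elements of $\widehat C$: the coefficient of $t$ is nonzero because $C\cap[X]$, a proper subset of the circuit $C$, is independent, so $\sum_{i\in C\cap[X]}c_if_i\neq0$ in $\K[V/X]$.

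The inclusion $\bc_0(\T_X(\A))\subseteq\bc(\A)|_{[n]-[X]}$ I would prove contrapositively. If $I\subseteq[n]-[X]$ is not a face of $\bc(\A)|_{[n]-[X]}$, it contains a broken circuit $C-\{\min C\}$ of $\A$; then $C\cap[X]\subseteq\{\min C\}$, so dependency transfer gives a circuit $C'\subseteq\widehat C\subseteq\{0\}\cup(C-[X])$ of $\T_X(\A)$. If $0\in C'$ then $\min_{\T_X}C'=0$, and $C\cap[X]\neq\emptyset$ forces $\min C\in[X]$, so $C-[X]=C-\{\min C\}$ and the broken circuit $C'-\{0\}\subseteq C-\{\min C\}\subseteq I$. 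If $0\notin C'$ then $C'\subseteq C$; if $\min C\in C'$ it is also the least element of $C'$, so $C'-\{\min_{\T_X}C'\}\subseteq C-\{\min C\}\subseteq I$, and otherwise $C'-\{\min_{\T_X}C'\}\subseteq C'\subseteq C-\{\min C\}\subseteq I$. In every case $I$ contains a broken circuit of $\T_X(\A)$, so $I\notin\bc_0(\T_X(\A))$.

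For the equality criterion I treat the two implications differently. Suppose $X$ is \emph{not} modular. One always has $\bc(\A)\subseteq\bc(\A_X)*\bc(\A)|_{[n]-[X]}$, by splitting an nbc-set of $\A$ along $[X]$ and using that $[X]$ is a flat so $\bc(\A)|_{[X]}=\bc(\A_X)$; by \eqref{eq:bc_splits} this inclusion is strict, so a face of the right-hand complex not in $\bc(\A)$ of minimal cardinality must in fact be a single broken circuit $B=C-\{\min C\}$ of $\A$ meeting both $[X]$ and $[n]-[X]$, with $I:=B-[X]=B\cap([n]-[X])$ a face of $\bc(\A)|_{[n]-[X]}$. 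Since $X$ is initial and $C\cap[X]\neq\emptyset$ we get $\min C\in[X]$, whence $I=C-[X]$, and dependency transfer applied to $C$ yields a circuit $C'\subseteq\{0\}\cup I$ of $\T_X(\A)$; as $0$ is least in $\T_X(\A)$, the broken circuit $C'-\{\min_{\T_X}C'\}$ lies in $I$. Thus $I$ is a face of $\bc(\A)|_{[n]-[X]}$ but not of $\bc_0(\T_X(\A))$, so the complexes differ. Conversely, if $X$ is modular, then given the inclusion already proved it remains only to match face numbers, i.e. face-counting polynomials $\sum_\sigma t^{|\sigma|}$. By Corollary~\ref{cor:relative_nbc} and Proposition~\ref{prop:relative_hs} applied to $\T_X(\A)$ with the hyperplane $0$ placed first, $\sum_{\sigma\in\bc_0(\T_X(\A))}t^{|\sigma|}=\pi(\T_X(\M(\A)),t)/(1+t)$; and by \eqref{eq:bc_splits}, multiplicativity of the face polynomial under joins, and the identification of $\pi$ with the face polynomial of $\bc$ (compare \eqref{eq:Terao_theorem} and Theorem~\ref{thm:PS_grobner}), $\sum_{\sigma\in\bc(\A)|_{[n]-[X]}}t^{|\sigma|}=\pi(\A,t)/\pi(\A_X,t)$. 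These agree by Brylawski's modular factorization \eqref{eq:pi_factors}, so the two complexes coincide.

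The delicate point is the dependency transfer lemma, which is really a hands-on form of Proposition~\ref{prop:fibre}: one must be sure the relation $\sum c_if_i=0$, restricted to the generic fibre, neither degenerates (ruled out by independence of $C\cap[X]$) nor acquires spurious support, so that it genuinely detects a dependency in $\M(\A_{\eta,X})=\T_X(\M(\A))$. The two case analyses and the Hilbert-series bookkeeping are then routine. A tidier but less self-contained alternative would be to quote a direct combinatorial description of the circuits of the complete principal truncation and bypass the slice computation.
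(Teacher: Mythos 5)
Your argument is essentially correct, but it takes a genuinely different route from the paper's in both halves of the equivalence, and there is one statement-versus-use mismatch to repair. For ``modular $\Rightarrow$ equal,'' the paper works face by face: a minimal non-face $S$ of $\bc_0(\T_X(\A))$ is analyzed using the lattice and rank function of the truncation (Lemma~\ref{lem:pr_trunc2}) together with the modular rank identity, producing $r\in[X]$ with $S\cup\set{r}$ dependent. You instead combine the Brylawski--Oxley join decomposition \eqref{eq:bc_splits} with Brylawski's factorization \eqref{eq:pi_factors} and compare $f$-polynomials; given the inclusion already proved, equality of face counts forces equality of complexes. This is clean and shorter, but it imports \eqref{eq:pi_factors} as a black box, whereas the paper presents that factorization as something \emph{recovered} from Theorem~\ref{thm:bc_modular} via Theorem~\ref{thm:modular_decomp}; your route is not circular, since Brylawski proved \eqref{eq:pi_factors} independently, but it forfeits that payoff. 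For ``equal $\Rightarrow$ modular,'' the paper runs a minimal-counterexample argument with the short-circuit axiom (Lemma~\ref{lem:shortcircuit}) and repeated circuit exchange; you contrapose through \eqref{eq:bc_splits}: if $X$ is not modular the containment $\bc(\A)\subseteq\bc(\A)|_{[X]}*\bc(\A)|_{[n]-[X]}$ is strict, a minimal witness is a broken circuit $B=C-\set{\min C}$ meeting both sides, and $I=C-[X]$ separates the two complexes. That is simpler, at the cost of invoking the full strength of Brylawski--Oxley where the paper only needs the easier direction of that theorem.

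The one real defect: your dependency-transfer lemma is stated only for circuits $C$ with $C\cap[X]\subseteq\set{\min C}$, but in the converse direction you apply it to the circuit $C$ underlying $B$, for which $C\cap[X]$ contains $\min C$ \emph{and} at least one element of $B$, so the stated hypothesis fails. Fortunately your proof of the transfer never uses that hypothesis: it needs only that $C\cap[X]$ is a nonempty proper subset of $C$ (hence independent, so $\sum_{i\in C\cap[X]}c_if_i$ is a nonzero form vanishing on $X$ and maps to a nonzero multiple of $t$), and in any case the coefficients $c_i$ for $i\in C-[X]$ are already nonzero, so $(C-[X])\cup\set{0}$ is dependent regardless. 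Restate the lemma for any circuit meeting both $[X]$ and its complement and the argument closes. A last cosmetic point: rather than routing the identity $\sum_{\sigma\in\bc_0}t^{\abs{\sigma}}=\pi(\M,t)/(1+t)$ through Corollary~\ref{cor:relative_nbc} and Proposition~\ref{prop:relative_hs} applied to an arrangement over the non--algebraically closed field $\K(V/X)_0$, just quote Whitney's theorem (the $f$-polynomial of $\bc$ equals $\pi$, and $\bc$ is a cone over $\bc_0$), which is purely combinatorial.
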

Before giving the proof, we recall two facts about modular flats.  The
first is a description of the lattice $L(\T_X(\A))$ in the
modular case.
\begin{lem}[Prop.~5.14(3), \cite{Bry75}, Prop.~2.3, \cite{FP02}]
\label{lem:pr_trunc2}
If $X\in L(\A)$ is modular,
\[
L(\T_X(\A))\cong \set{Y\in L(\A)\colon X\wedge Y=V\text{~or~} X\leq Y}.
\]
Moreover, the rank function is given by
\[
\rho_{\T_X(\A)}(Y)=\begin{cases}\rho(Y) & \text{if $X\wedge Y=V$;}\\
\rho(Y)-\rho(X)+1 & \text{if $X\leq Y$.}
\end{cases}
\]
\end{lem}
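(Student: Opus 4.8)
The plan is to read $L(\T_X(\A))$ off the geometric model of the complete principal truncation recalled in \S\ref{ss:mod_fib}; this amounts to Brylawski's argument \cite[Prop.~5.14(3)]{Bry75} (see also \cite[Prop.~2.3]{FP02}). Recall (Definition~\ref{def:TXA}) that $\T_X(\A)=\A_{\eta,X}$ is obtained by intersecting $\A$ with a generic affine translate $X+w$ of $X$ --- over $\KK=\K(V/X)_0$ --- and then taking projective closures; it carries the distinguished hyperplane $0$, namely the hyperplane at infinity $\set{0}\times X$ of that slice. The main step is to split the flats of $\T_X(\A)$ into those not containing the element $0$ and those containing it.

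First I would treat a flat $F$ of $\T_X(\A)$ with $0\notin F$. Such an $F$ meets the affine chart $X+w$ in a nonempty affine flat, which is $Z\cap(X+w)$ for a unique $Z\in L(\A)$; nonemptiness forces $Z+X=V$ as subspaces. Conversely, every $Z\in L(\A)$ with $Z+X=V$ arises this way, with $F=\overline{Z\cap(X+w)}$ (for generic $w$ the projective closure recovers $F$), and since a hyperplane of $\A$ containing both $Z$ and $X$ would contain $Z+X=V$, we get $[Z]\subseteq[n]-[X]$, so $Z=\cl{[Z]}$ is recovered from $F$ and the two assignments are inverse. The rank of $F$ in $\T_X(\A)$ is its projective codimension, which the Grassmann formula evaluates to $\codim_{X+w}(Z\cap(X+w))=\rho(Z)$. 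Here is where modularity enters: because $X$ is modular, $Z+X$ is automatically a flat of $\A$, so the subspace identity $Z+X=V$ is equivalent to the lattice identity $X\wedge Z=V$. Hence the flats of $\T_X(\A)$ not containing $0$ correspond to $\set{Z\in L(\A)\colon X\wedge Z=V}$, with $\rho_{\T_X(\A)}(Z)=\rho(Z)$ --- the first branch of the claim.

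Next I would treat a flat $F$ with $0\in F$; these are exactly the flats of the contraction $\T_X(\A)/0$. Geometrically $\T_X(\A)/0$ is the restriction of $\A_{\eta,X}$ to $\set{0}\times X$, i.e.\ the arrangement on $X$ with hyperplanes $H_i\cap X$ for $i\in[n]-[X]$ (genuine hyperplanes of $X$, as $i\notin[X]$ means $X\not\subseteq H_i$). Its flats are the subspaces $\cl{S}\cap X$ with $S\subseteq[n]-[X]$; using $[X]\subseteq[Y]$ one checks that every $Y\in L(\A)$ with $Y\subseteq X$ has this form (take $S=[Y]-[X]$), so these flats are precisely $\set{Y\in L(\A)\colon X\le Y}$. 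The rank of such a $Y$ in $\T_X(\A)/0$ is $\codim_X Y=\rho(Y)-\rho(X)$, whence $\rho_{\T_X(\A)}(Y)=\rho(Y)-\rho(X)+1$ --- the second branch. To finish I would verify that these two assignments assemble into an order isomorphism onto $\set{Y\in L(\A)\colon X\wedge Y=V\text{ or }X\le Y}$: within each family the order of $L(\T_X(\A))$ is reverse inclusion of subspaces, as in $L(\A)$, and a flat $Y$ of the second family lies below a flat $Z$ of the first precisely when $Y\subseteq Z\cap X$, equivalently $Y\subseteq Z$ (since $Y\subseteq X$), equivalently $Z\le Y$ in $L(\A)$ --- and never conversely, since the first family's flats have nonempty affine part.

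The step I expect to be the main obstacle is not any individual computation but making this geometric dictionary watertight: checking that the flats of $\T_X(\A)$ split cleanly into the ``affine'' and ``at infinity'' families, that each correspondence above is a bijection onto its half of the target poset, and --- crucially --- that modularity is used exactly to render $Z+X=V$ and $X\wedge Z=V$ interchangeable, failing which the first family would be overcounted (as one sees already for a non-modular rank-$2$ flat of a generic rank-$3$ arrangement). A secondary technical point is that the slice is taken over the function field $\KK$, not at a closed point of $\P M(\A_X)$; but the determinantal genericity recorded in Proposition~\ref{prop:fibre} is precisely what validates the ``generic'' assertions at the generic point $\eta$.
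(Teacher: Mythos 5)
The paper never proves this lemma: it is quoted verbatim from Brylawski \cite[Prop.~5.14(3)]{Bry75} and Falk--Proudfoot \cite[Prop.~2.3]{FP02}, so there is no internal argument to match. Your geometric derivation is a legitimate and essentially correct substitute, and it fits the paper's framework well, since Definition~\ref{def:TXA} takes $\T_X(\A)=\A_{\eta,X}$ as the definition, so computing the intersection lattice of the generic fibre arrangement is exactly what is asked; the genericity points you defer (nonemptiness and proper dimension of $Z\cap(X+w)$, uniqueness of $Z$, stability of the index sets) are finitely many open conditions, handled at the generic point as in Proposition~\ref{prop:fibre}. The contrast worth noting is that Brylawski's original proof is purely lattice-theoretic and applies to arbitrary geometric lattices, with no realization; your argument buys geometric transparency (the split into affine flats and flats at infinity, i.e.\ $0\notin[F]$ versus $0\in[F]$) at the cost of working only for realizable matroids, which suffices here. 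You also correctly isolate where modularity enters: only in the affine family, to convert $Z+X=V$ into $X\wedge Z=V$; the family at infinity is the restriction $\A^X$, whose lattice is the upper interval above $X$ for \emph{any} flat.

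Two small repairs. First, in the cross-family comparison your word ``below'' refers to inclusion of subspaces, which is the opposite of the usual order on $L$ (reverse inclusion): the correct statement is that the family-one flat attached to $Z$ lies \emph{below} the family-two flat attached to $Y$ exactly when $\set{0}\times Y\subseteq \overline{Z\cap(X+w)}$, i.e.\ $Y\subseteq Z\cap X$, i.e.\ $Z\le Y$ in $L(\A)$ --- the criterion you derive is the right one, only the labelling of which flat is lower should be fixed. Second, in the family at infinity you prove one inclusion (every $Y\ge X$ occurs); to justify ``precisely'' add the one-line converse that each $\cl{S}\cap X$ with $S\subseteq[n]-[X]$ is itself an intersection of hyperplanes of $\A$ (those indexed by $S\cup[X]$), hence a flat of $L(\A)$ lying above $X$.
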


Second, Brylawski's ``short-circuit axiom'' of \cite{Bry75} characterizes modularity in terms of circuits: $X$ is modular if and only if, for all $C\in \CC(\M(\A))$ for which $C-[X]\neq\emptyset$, there exists some $q\in [X]$ such that $(C-[X])\cup\set{q}$ is a
dependent set.  To reformulate slightly, let $\partial(S) := \cl{S}-S$, for any $S\subseteq[n]$.  Then the short-circuit axiom
can also be stated as
\begin{lem}\label{lem:shortcircuit}
An initial flat $X$ of $\M(\A)$ is modular if and only if, for all $C\in\CC(\M(\A))$, either $C\subseteq[X]$, $C\subseteq[n]-[X]$, or $\partial(C-[X])$ is nonempty and its least element is in $[X]$.
\end{lem}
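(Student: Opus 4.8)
The plan is to recognize the displayed condition as Brylawski's short-circuit axiom, merely rewritten in terms of the fixed ordering of $\A$, using that $X$ is an \emph{initial} flat. Since the short-circuit axiom recalled above already characterizes modularity of $X$, it is enough to prove: for an initial flat $X$, the short-circuit axiom holds if and only if every circuit $C\in\CC(\M(\A))$ satisfies $C\subseteq[X]$, or $C\subseteq[n]-[X]$, or $\partial(C-[X])\neq\emptyset$ with least element in $[X]$.

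First I would dispose of the circuits that do not straddle $X$. If $C\subseteq[X]$, then $C-[X]=\emptyset$, so the short-circuit axiom says nothing about $C$; this is the first alternative. If $\emptyset\neq C\subseteq[n]-[X]$, then $(C-[X])\cup\set{q}=C\cup\set{q}$ contains the circuit $C$ and is therefore dependent for every $q$, so the short-circuit condition for $C$ holds automatically; this is the second alternative. (We may assume $[X]\neq\emptyset$, since when $X=V$ both sides of the lemma are immediate.) Thus everything reduces to the \emph{straddling} circuits: those with $C\cap[X]\neq\emptyset$ and $D:=C-[X]\neq\emptyset$.

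For a straddling circuit $C$, the set $D$ is a proper subset of the circuit $C$, hence independent, and $D\cap[X]=\emptyset$ since $D\subseteq[n]-[X]$. Therefore, for $q\in[X]$ the set $D\cup\set{q}$ is dependent precisely when $q\in\cl{D}$, that is (as $q\notin D$), when $q\in\cl{D}-D=\partial(D)$. So the short-circuit axiom is equivalent to the statement that $\partial(C-[X])\cap[X]\neq\emptyset$ for every straddling circuit $C$. Finally, the initial-flat hypothesis enters: $[X]=\set{1,\dots,n_X}$ is an initial segment of $[n]$, so for \emph{any} subset $S\subseteq[n]$ one has $S\cap[X]\neq\emptyset$ if and only if $S\neq\emptyset$ and $\min S\in[X]$. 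Applying this with $S=\partial(C-[X])$ yields exactly the third alternative, and the lemma follows from the short-circuit characterization of modularity.

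I do not anticipate a real obstacle: the work is the bookkeeping — isolating the straddling circuits and checking that the remaining circuits are treated identically by both formulations, and rewriting ``$D\cup\set{q}$ is dependent for some $q\in[X]$'' as a closure condition — together with the one genuine observation that, because $[X]$ is an initial segment of $[n]$, the set $\partial(C-[X])$ meets $[X]$ exactly when its least element lies in $[X]$.
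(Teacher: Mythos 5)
Your proposal is correct and follows exactly the route the paper intends: the paper presents this lemma as an immediate reformulation of Brylawski's short-circuit axiom and offers no further proof, and your write-up simply supplies the routine verification (splitting off the non-straddling circuits, converting ``$(C-[X])\cup\set{q}$ dependent'' into $q\in\partial(C-[X])$ via independence of $C-[X]$, and using that $[X]$ is an initial segment to replace ``meets $[X]$'' by ``least element lies in $[X]$'').
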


It will be convenient to isolate a technical lemma:
\begin{lem}\label{lem:bc_modular}
Suppose that $X\in L(\A)$ is initial, and $\bc_0(\T_X(\A))=
\bc(\A)|_{[n]-[X]}$.  For any $C\in\CC(\M(\A))$ with $C\not\subseteq[X]$, the set $C-[X]$ is a broken circuit of $\M(\A)$.
\end{lem}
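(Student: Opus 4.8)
The hypothesis gives us an equality of two simplicial complexes on the vertex set $[n]-[X]$, and we want to deduce a combinatorial statement about an arbitrary circuit $C$ with $C\not\subseteq[X]$. My plan is to argue by contradiction: suppose $C-[X]$ is \emph{not} a broken circuit of $\M(\A)$. Then the first job is to understand what it means for $C-[X]$ to fail to be a broken circuit, and to locate $C-[X]$ (or a suitable superset/subset of it) inside one of the two complexes.

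\textbf{Step 1: $C-[X]$ is a face of $\bc(\A)|_{[n]-[X]}$.} The set $C-[X]$ is a proper subset of the circuit $C$ (since $C\cap[X]\neq\emptyset$ — otherwise $C\subseteq[n]-[X]$, but then $C-[X]=C$ would have to be dealt with separately, and in that case $C$ being a circuit means every proper subset is independent, but $C$ itself is a broken circuit only if... actually if $C\subseteq[n]-[X]$ then $C$ is a circuit entirely in $[n]-[X]$, so $C\in\CC(\M(\A))$ contributes a broken circuit $C-\{\min C\}$; one must check $C-[X]=C$ still works, which it does not directly — so I first reduce to the case $C\cap[X]\neq\emptyset$ and handle $C\subseteq[n]-[X]$ trivially since then $C-[X]=C$ and $C$ is \emph{not} a broken circuit in general; wait, this forces care). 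Let me restate: if $C\subseteq[n]-[X]$, then $C-[X]=C$; a circuit is never a broken circuit unless it equals $C'-\{\min C'\}$ for another circuit $C'$, which is possible — so this case is genuinely nontrivial and I suspect the intended reading is $C\not\subseteq[X]$ \emph{and} $C\cap[X]\neq\emptyset$, OR that the statement is simply false for $C\subseteq[n]-[X]$ unless... Actually re-examining: if $C\subseteq[n]-[X]$, then $C\notin\bc(\A)|_{[n]-[X]}$ (a circuit is never an nbc-set), so $C-[X]=C$ is not even a face; the claim "$C-[X]$ is a broken circuit" might still hold via \emph{another} circuit. I will assume the nontrivial case $C\cap[X]\neq\emptyset$, so that $C-[X]\subsetneq C$, hence $C-[X]$ is independent, hence $C-[X]\in\bc(\A)|_{[n]-[X]}$ provided it contains no broken circuit — but that's exactly what we're trying to rule out. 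So the real content:

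\textbf{Step 2: Use $\bc_0(\T_X(\A))=\bc(\A)|_{[n]-[X]}$ to derive a contradiction.} Since $C-[X]$ is an independent set in $\M(\A)$ lying in $[n]-[X]$, and (by assumption for contradiction) it contains no broken circuit of $\M(\A)$, it is a face of $\bc(\A)|_{[n]-[X]}=\bc_0(\T_X(\A))$. So $C-[X]$ is an nbc-set of the truncation $\T_X(\A)$ (relative to the induced order on $[n]-[X]$, after deleting the least element $e$ of $\T_X(\A)$ — I need to be careful about whether $e\in C-[X]$). In particular $C-[X]$ is \emph{independent} in $\T_X(\A)$. But now I bring in the structure of $\T_X(\M(\A))$: I claim $C-[X]$ is actually \emph{dependent} in $\T_X(\A)$, which is the contradiction. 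To see this, use Lemma~\ref{lem:pr_trunc2} (rank function of the modular truncation — wait, that lemma assumes $X$ modular, but here we are \emph{not} assuming $X$ modular; we only have the complex equality). So instead I must use a direct rank computation for the complete principal truncation: in $\T_X(\M(\A))$, the rank of a set $S\subseteq[n]-[X]$ is $\min\{\rho(S),\ \rho(S\cup[X])-\rho(X)+1\}$ (this is the defining property of the complete principal truncation at the flat $X$; see \cite[p.~379]{Oxbook}). Apply this to $S=C-[X]$. Since $C$ is a circuit with $C\cap[X]\neq\emptyset$, the closure of $C-[X]$ in $\M(\A)$ contains $\cl{C}\supseteq C$, so $\rho((C-[X])\cup[X])\leq\rho(\cl C\vee X)$; combined with $C$ dependent, a short computation should give $\rho_{\T_X}(C-[X])<|C-[X]|$, i.e.\ $C-[X]$ is dependent in $\T_X(\A)$ — contradicting that it is an nbc-set (hence independent) there.

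\textbf{Main obstacle.} The delicate point is the bookkeeping around the distinguished least element: $\bc_0$ is defined by deleting the minimum, and $\T_X(\A)$ carries the matroid on ground set $\{0\}\cup([n]-[X])$ (Definition~\ref{def:fibre arrangement}, Definition~\ref{def:TXA}) with $0$ playing the role of the new point coming from $X$; one must match up orders so that $0$ is the least element, conclude that faces of $\bc_0(\T_X(\A))$ are nbc-sets of $\T_X(\A)$ \emph{avoiding} $0$, and verify that such a set is independent in $\T_X(\A)$. The other fiddly bit is the rank inequality: I expect it comes down to $\rho((C-[X])\cup[X])-\rho(X)+1\leq |C-[X]|-1+(\text{something})$, using $\rho(C)=|C|-1$ and $\rho(C\cup[X])\le\rho(C)+\rho([X])-\rho(C\cap[X])$ (submodularity of $\rho$ together with $\cl{C\cap[X]}\subseteq X$), so the genuinely technical content is one application of submodularity plus the truncation rank formula. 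I do not expect any step to require more than a few lines once the ground-set identification is pinned down.
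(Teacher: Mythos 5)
Your plan breaks down at the decisive point of Step~2. Having correctly placed $C-[X]$ (under the contradiction hypothesis) as a face of $\bc(\A)|_{[n]-[X]}=\bc_0(\T_X(\A))$, you propose to derive the contradiction by showing that $C-[X]$ is \emph{dependent} in $\T_X(\A)$. That claim is false in general, and the rank computation you sketch cannot deliver it: with $S=C-[X]$ and $C\cap[X]\neq\emptyset$, submodularity gives only $\rho(S\cup[X])-\rho(X)+1\leq\abs{S}$, so the truncation rank formula yields $\rho_{\T_X(\M(\A))}(S)\leq\abs{S}$ --- equality is possible, and no contradiction results. Concretely, in Example~\ref{ex:a3vsx3} with $\A=A_3$ and $X=\set{x=y=0}$, take $C=\set{x,z,x-z}$: then $C-[X]=\set{z,x-z}$ restricts to two distinct points of the four-point fibre arrangement (a $U_{2,4}$), hence is independent in $\T_X(\A)$.

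What actually forces $C-[X]$ out of $\bc_0(\T_X(\A))$ is not its own dependence but the broken-circuit condition through the distinguished element $0$ --- exactly the bookkeeping you flag as ``delicate'' and then never use. The set $(C-[X])\cup\set{0}$ \emph{is} dependent in $\T_X(\A)$ (indeed a circuit: the relation $\sum_{i\in C}c_if_i=0$ restricts to the fibre, where every $f_i$ with $i\in[X]$ becomes a multiple of $t$), and since $0$ is the least element of the ground set of $\T_X(\A)$, this exhibits $C-[X]$ as a broken circuit of $\T_X(\A)$, hence a non-face of $\bc_0(\T_X(\A))=\bc(\A)|_{[n]-[X]}$, hence a broken circuit of $\M(\A)$. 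That is the paper's entire (three-line, direct) proof; no rank formula or submodularity is needed. Separately, your treatment of the case $C\cap[X]=\emptyset$ never reaches a conclusion --- you discuss it at length and then set it aside --- whereas the statement requires it to be addressed (here $C-[X]=C$ is itself a circuit of $\A_v$, so again a non-face of $\bc_0(\T_X(\A))$). As written, the proposal does not prove the lemma.
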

\begin{proof}
If $C\cap[X]=\emptyset$, the claim is obvious; otherwise, $(C-[X])\cup\set{0}$ is a circuit of $\T_X(\A)$.  Since $0<\min(C-[X])$, we see $C-[X]$ is a broken circuit of $\T_X(\A)$.  By
hypothesis, then, $C-[X]$ is also a broken circuit of $\A$.
\end{proof}
\begin{proof}[Proof of Theorem~\ref{thm:bc_modular}]
Fix $\A$ and $X$, and let $\Delta=\bc(\A)|_{[n]-[X]}$ and $\Delta_0=\bc_0(\T_X(\A))$ for
short.  First, we show that $\Delta_0\subseteq \Delta$ for all $X$.

If $\sigma\in\Delta_0$, suppose $\sigma\not\in\Delta$.  Since the set
$\sigma$ is independent in $\M(\A)$, by choosing $\sigma$ to be minimal, we may assume $\sigma$ is a broken circuit: that is, there exists some $i$ for which $\sigma\cup\set{i}\in\CC(\M(\A))$ and $i<\min(\sigma)$.  We must have $i\leq n_X$, since otherwise $\sigma\cup\set{i}$ would be a  circuit in $\T_X(\A)$, contrary to assumption.
But then $\sigma\cup\set{0}$ is dependent in $\T_X(\A)$, again a contradiction.  It follows that $\Delta_0\subseteq \Delta$.

Now suppose $X$ is modular.  We assume $\rho(X)>1$, since the case where $X$ is a hyperplane is trivial.  Suppose that $S\subseteq[n]-[X]$ is a minimal non-face of $\Delta_0$.  That is, $S$ is a broken circuit, meaning $S\cup\set{q}$ is a circuit of $\T_X(\A)$ for some $q<\min(S)$.  First, suppose $q=0$.
By Lemma~\ref{lem:pr_trunc2}, we must have $X\wedge \cl{S}\neq V$, so there
exists some $q\in [X]\cap\cl{S}$.  That is, $S\cup\set{q}$ is dependent in $\M(\A)$, so there is a circuit $C\subseteq S\cup\set{q}$.
Then $C-\set{q}\cup\set{0}$ is a dependent set in $\T_X(\A)$: by
assumption on $S$, we must have $C=S\cup\set{q}$.  Since $q\leq n_X<\min(S)$, we see $S$ is a broken circuit of $\M(\A)$, and a non-face of $\Delta$.

Otherwise, $q>n_X$.  If $S\cup\set{q}$ is a circuit of $\M(\A)$, again
we are done, so assume $S\cup\set{q}$ is independent in $\M(\A)$.
Let $Y_0=\cl{S}$ and $Y=\cl{S\cup\set{q}}$ in $L(\A)$.
Then $\rho(Y)=\abs{S}+1>\rho_{\T_X(\A)}(Y)$, so by Lemma~\ref{lem:pr_trunc2}, we must have $X\leq Y$.  Since $X$ is modular,
\begin{eqnarray*}
\rho(X\wedge Y_0) &=& \rho(X)+\rho(Y_0)-\rho(Y)\\
&=&\rho(X)-1\\
&\geq&1,
\end{eqnarray*}
by hypothesis.  It follows that  $S\cup\set{r}$ is dependent for some $r\in[X]$.  Since then $r<\min(S)$, we see $S$ is again a non-face of $\Delta$.

Finally, we check the converse.  Suppose that $\Delta_0=\Delta$, and we show $X$ is modular using Brylawski's ``short circuit axiom'' (Lemma~\ref{lem:shortcircuit}).
If $C\not\subseteq[X]$, then $\partial(C-[X])$ is nonempty by Lemma~\ref{lem:bc_modular}, so we must show $\min\partial(C-[X])\in[X]$.

Order subsets $S,T\subseteq[n]$ so that $S\prec T$ if
either $\abs{S}<\abs{T}$ or $\abs{S}=\abs{T}$ and $S$ precedes $T$ in
lexicographic order.  Let
\[
{\mathcal C}_X=\set{C\in\CC(\M(\A)):C\not\subseteq[X]\text{~and~}
\min\partial(C-[X])\not\in [X]}.
\]
If $X$ is not modular, then ${\mathcal C}_X$ is not empty, and it contains a minimal element $C_1$.
Let $q=\min\partial(C_1-[X])$: by assumption, $q>n_X$.
Then, for some $S\subseteq C_1-[X]$, we have a circuit $S\cup\set{q}$
with $q<\min S$.  Let $r=\max S$.  By the circuit exchange axiom, there exists a circuit
\begin{eqnarray*}
C_2&\subseteq& C_1\cup S\cup\set{q}-\set{r}\\
&=&C_1\cup\set{q}-\set{r}.
\end{eqnarray*}
By construction, $q<r$, so $C_2\prec C_1$.  If $C_2\subseteq[X]$, note that $C_2-\set{q}\subseteq C_1-[X]$, so circuit exchange using
$C_2$ and $S\cup\set{q}$ to eliminate $q$ would give a circuit contained in $C_1-[X]$, a contradiction.  So $C_2\not\subseteq[X]$.

Let $p=\min\partial(C_2-[X])$.  Then $T\cup\set{p}$ is a circuit of $\M(\A)$ for some broken circuit $T\subseteq C_2-[X]$.  By minimality of $C_1$ in ${\mathcal C}_X$, we have $p\in[X]$, so $p<q$.
Note that $q\in T$, since if not, $T\subseteq C_1-[X]$.  But then $p\in \partial(C_1-[X])$, contradicting our choice of $q$ (since $p<q$).

To complete the argument, we use the circuit exchange axiom again with $S\cup\set{q}$ and $T\cup\set{p}$ to obtain a circuit
\begin{eqnarray*}
C_3&\subseteq &S\cup\set{q}\cup T\cup\set{p}-\set{q}\\
&\subseteq &(C_1-[X])\cup\set{p}.
\end{eqnarray*}
Clearly $p\in C_3$, which contradicts the choice of $q$.  We conclude that ${\mathcal C}_X$ is empty.
\end{proof}
This leads to a comparison of coordinate intersections of reciprocal planes.
\begin{thm}\label{thm:fibre}
If $X$ is a flat of an arrangement $\A$, then for any
closed point $v$ in $\P M(\A_X)$, there is a surjection of algebras
\begin{equation}\label{eq:fibre_map}
p\colon \OT_{[n]-[X]}(\A)\to \OT_{X}(\A_v),
\end{equation}
The map $p$ is an isomorphism if and only if $X$ is a modular flat.
\end{thm}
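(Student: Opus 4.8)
The plan is to produce the map $p$ explicitly at the level of polynomial presentations and then compare Hilbert series (for one direction) and Stanley--Reisner degenerations (for the other). First I would construct $p$. Both $\OT_{[n]-[X]}(\A)$ and $\OT_X(\A_v)$ are quotients of $\K[y_j\colon j\in[n]-[X]]$ (for the latter, identify the ambient affine space $\AA^1\times X$ of $\A_v$ so that the coordinate indexed by the distinguished hyperplane $X\in\A_v$ corresponds to the would-be variable $y_0$, and note $\OT_X(\A_v)=\OT_{\{0\}\cup([n]-[X])}(\A_v)/(y_0)$, i.e.\ we kill all coordinates of $\A_v$ except those in $[n]-[X]$). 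Using Proposition~\ref{prop:relative_relns} applied to the hyperplane ``$0$'' of the fibre arrangement $\A_v$, the relative ideal of $\OT_X(\A_v)$ is generated by the $\bar r_c$ for circuits $c$ of $\M(\A_v)$; for circuits $c$ with $0\in[c]$ this is a squarefree monomial $\prod_{j\in[c]-\{0\}}y_j$, and for $0\notin[c]$ it is the usual $r_c$ of a circuit contained in $[n]-[X]$. On the other side, $\OT_{[n]-[X]}(\A)=\OT(\A)/(y_i\colon i\in[X])$ has defining ideal the image of $I(\A)$ after setting $y_i=0$ for $i\in[X]$; by Theorem~\ref{thm:PS_grobner} this is generated by the reductions $\bar r_C$ of the circuit relations of $\M(\A)$. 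One checks directly that each generator of $I(\A)$ reduces, modulo $(y_i\colon i\in[X])$, either to $0$ (if $C$ meets $[X]$ in more than one element, since then the monomial $\prod_{j\in[C]-\{k\}}y_j$ vanishes for every $k$ so that $C-\{k\}$ still meets $[X]$) or to $r_C$ (if $C\subseteq[n]-[X]$) or to a monomial $c_q\prod_{j\in C-\{q\}}y_j$ where $\{q\}=C\cap[X]$. Comparing with the above, I would define $p$ to be the identity on the shared polynomial ring; the content of the first sentence of the theorem (which is essentially the specialization of the FP-style argument of Proposition~\ref{prop:fibre}) is that every relation of $\OT_X(\A_v)$ is hit, so $p$ is well defined and surjective, for any closed $v$.

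For the ``if'' direction, assume $X$ is modular and $v$ is a generic point so that $\M(\A_v)=\T_X(\M(\A))$ (Proposition~\ref{prop:fibre}); since the isomorphism type of both sides for generic $v$ does not depend on the choice of generic $v$, and the statement we want is that $p$ is an isomorphism \emph{for any} closed $v$ when $X$ is modular, I would first treat generic $v$ and then argue that $\OT_X(\A_v)$ can only get larger (as a quotient) for special $v$—but since its Hilbert series is fixed by Proposition~\ref{prop:relative_hs} applied to $\A_v$ (whose combinatorics, via the truncation matroid, is constant), and the $\bar r_c$ form a flat family, $p$ being surjective with matching Hilbert series forces injectivity for every $v$. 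Concretely: $p$ is surjective, so it suffices to show $h(\OT_{[n]-[X]}(\A),t)=h(\OT_X(\A_v),t)$. For the left side, $\OT_{[n]-[X]}(\A)\cong\K[y_1,\dots,y_n]/((y_i\colon i\in[X])+I(\A))$, and by Proposition~\ref{prop:bc_subcomplex} (with $I=[n]-[X]$, after reordering so $X$ is initial and using lex order) its initial ideal is $J_{\bc(\A)|_{[n]-[X]}}$, so its Hilbert series is that of the Stanley--Reisner ring of $\bc(\A)|_{[n]-[X]}$. For the right side, by Corollary~\ref{cor:relative_nbc} applied to the fibre arrangement $\A_v$ with the distinguished hyperplane $0$ placed first, $\In\OT_X(\A_v)\cong\K[\,\cdot\,]/J_{\bc_0(\M(\A_v))}=\K[\,\cdot\,]/J_{\bc_0(\T_X(\M(\A)))}$. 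Now Theorem~\ref{thm:bc_modular} says precisely that $\bc_0(\T_X(\A))=\bc(\A)|_{[n]-[X]}$ when $X$ is modular, so the two Stanley--Reisner rings coincide and the Hilbert series agree; hence $p$ is an isomorphism.

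For the ``only if'' direction I would prove the contrapositive. If $X$ is not modular, then by Theorem~\ref{thm:bc_modular} the complex $\bc_0(\T_X(\A))$ is a \emph{proper} subcomplex of $\bc(\A)|_{[n]-[X]}$; taking $v$ generic so $\M(\A_v)=\T_X(\M(\A))$, the face-count inequality gives $\dim_\K\K[\,\cdot\,]/J_{\bc_0(\T_X(\A))}<\dim_\K\K[\,\cdot\,]/J_{\bc(\A)|_{[n]-[X]}}$ in every sufficiently high degree, i.e.\ strictly smaller Hilbert series. Since passing to initial ideals preserves Hilbert series, $h(\OT_X(\A_v),t)<h(\OT_{[n]-[X]}(\A),t)$ coefficientwise, so the surjection $p$ cannot be injective; and for special $v$ the fibre $\OT_X(\A_v)$ is a further quotient still, so $p$ fails to be an isomorphism for every $v$. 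The main obstacle I anticipate is the bookkeeping in the first paragraph—getting the index conventions for the distinguished hyperplane of $\A_v$ to line up so that ``$\OT_X(\A_v)$'' really is a quotient of the same polynomial ring $\K[y_j\colon j\in[n]-[X]]$ as $\OT_{[n]-[X]}(\A)$, and verifying that the generators $\bar r_c$ of Proposition~\ref{prop:relative_relns} for $\A_v$ are exactly the images of the $r_c$ of $\A$—together with justifying the independence-of-$v$ claim (that surjectivity holds for all closed $v$, not merely generic ones), which is where one leans on the determinantal/flatness description behind Propositions~\ref{prop:fitting} and~\ref{prop:fibre}.
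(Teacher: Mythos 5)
Your proposal is correct and follows essentially the same route as the paper: the surjection $p$ is constructed by the same three-case analysis of how the circuit relations of $\A$ reduce modulo $(y_i\colon i\in[X])$ against the generators of Proposition~\ref{prop:relative_relns} for $\A_v$, and the isomorphism criterion is obtained by passing to initial ideals via Proposition~\ref{prop:bc_subcomplex} and Corollary~\ref{cor:relative_nbc} and invoking Theorem~\ref{thm:bc_modular} in both directions, exactly as in the paper (your Hilbert-series comparison is just the additive shadow of the paper's statement that $\In(p)$ is an isomorphism). The one point where you, like the paper, lean on an unstated input is that for modular $X$ \emph{every} closed fibre, not only the generic one, realizes $\T_X(\M(\A))$; this is supplied by \cite{FP02}.
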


\begin{proof}
Let $\phi\colon Y_{X}(\A_v)\to\K^{[n]-[X]}$ denote the natural embedding.  We claim that $\phi^*$ factors through $\OT_{[n]-[X]}(\A)$.
To see this, using \eqref{eq:OTpres} and Definition~\ref{def:OT_0}, we must check that the ideal $I(\A)+(y_i\colon i\in[X])$ maps to zero.  Since relations $r_c$ from
\eqref{eq:OTrels} indexed by circuits $c$ of $\A$ generate $I(\A)$, it is enough to show $\phi^*(r_c)=0$ for all $c$, and we do so by considering three cases.
\begin{enumerate}
\item $\abs{[c]\cap[X]}\geq2$: in this case, $r_c$ is zero in $\OT_{[n]-[X]}(\A)$, since each monomial contains a variable indexed by $[X]$.
\item $[c]\cap[X]=\set{i}$, for some $i$: then $\tilde{C}:=\set{0}\cup[c]-\set{i}$
is a circuit in $\T_X(\M(\A))$.  The image of the element $r_c$ in $\OT_{[X]}(\A)$ is a unit multiple of $\prod_{j\in [c]-\set{i}}y_i$ by \eqref{eq:OTrels}.  Then $\phi^*(r_{[c]})$ is zero, in view of the relation indexed by $\tilde{C}$ in \eqref{eq:relOTrels}.
\item $[c]\cap[X]=\emptyset$: then $c$ is also a circuit of $\A_v$, so the image of $r_c$ in $\OT_X(\A_v)$ is zero.
\end{enumerate}
Since $\phi^*$ is surjective, so is the induced map $p$
of \eqref{eq:fibre_map}.

To prove the second claim, we order the hyperplanes of $\A$ so that
$[X]$ is initial, and pass to initial ideals.  Using Proposition~\ref{prop:bc_subcomplex}, we see that $p$ induces
a map of Stanley-Reisner rings
\[
\In(p)\colon \K[y_1,\ldots,y_n]/J_{\bc(\A)_{[n]-[X]}}\to
\K[y_0,y_k,y_{k+1},\ldots,y_n]/J_{\bc_0(\A_{v,X})}.
\]
Since the map is the identity on the nonzero degree-$1$ elements, it induces a map of simplicial complexes.  If $X$ is modular, then,
$\In(p)$ is an isomorphism, by Theorem~\ref{thm:bc_modular},
then $p$ is as well.  Conversely, if $p$ is an isomorphism, then
$\bc(\A)_{[n]-[X]}\cong \bc_0(\A_{v})$.  It follows that
$\M(\A_v)=\T_X(\M(\A))$, using Proposition~\ref{prop:fibre}:
if not, $\M(\A_v)$ has strictly more dependent sets than $\T_X(\M(\A))$, so $\bc_0(\A_{v})$ would be a proper subcomplex
of $\bc_0(\T_X(\A))$, contradicting Theorem~\ref{thm:bc_modular}.

\end{proof}

It follows from Theorem~\ref{thm:fibre} that:
\begin{cor}
If $X$ is modular, the algebras $\OT_X(\A_v)$ are isomorphic, for all closed points $v\in \P M(\A_X)$.
\end{cor}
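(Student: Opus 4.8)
The plan is to deduce this directly from Theorem~\ref{thm:fibre}, the point being that the source of the comparison map \eqref{eq:fibre_map} does not depend on the point $v$. Indeed, by Definition~\ref{def:OT_0} the algebra $\OT_{[n]-[X]}(\A)=\OT(\A)/(y_i\colon i\in [X])$ is built from $\A$ and $X$ alone, with no reference to any choice of $v\in\P M(\A_X)$.

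First I would fix two closed points $v_1,v_2\in\P M(\A_X)$. Since $\K$ is algebraically closed, each has residue field $\K$, so the fibre arrangements $\A_{v_1,X}$ and $\A_{v_2,X}$ are honest arrangements over $\K$, and $\OT_X(\A_{v_1})$, $\OT_X(\A_{v_2})$ are $\K$-algebras. Because $X$ is modular, Theorem~\ref{thm:fibre} applies at each point and yields isomorphisms
\[
p_{v_1}\colon \OT_{[n]-[X]}(\A)\to\OT_X(\A_{v_1}),\qquad
p_{v_2}\colon \OT_{[n]-[X]}(\A)\to\OT_X(\A_{v_2}).
\]
Then $p_{v_2}\circ p_{v_1}^{-1}$ is the desired isomorphism $\OT_X(\A_{v_1})\cong\OT_X(\A_{v_2})$, and since $v_1,v_2$ were arbitrary the claim follows.

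I do not expect a genuine obstacle: the entire content sits in Theorem~\ref{thm:fibre}, and ultimately in its combinatorial engine Theorem~\ref{thm:bc_modular}, which forces $\M(\A_{v,X})=\T_X(\M(\A))$ for every closed point $v$ as soon as $X$ is modular. The only point requiring a little care is to state the conclusion as the existence of abstract $\K$-algebra isomorphisms: I would not assert that they can be chosen compatibly as $v$ ranges over $\P M(\A_X)$, which is a strictly stronger family-level statement that is neither needed nor proved here. If one prefers, the result can be recorded in the sharper form that every $\OT_X(\A_v)$ is isomorphic to the single algebra $\OT_{[n]-[X]}(\A)$, which is the shape in which it feeds into the modular decomposition \eqref{eq:intro_split}.
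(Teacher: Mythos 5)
Your argument is exactly the paper's (the corollary is stated there with no proof beyond ``It follows from Theorem~\ref{thm:fibre}''): since $X$ is modular, the map \eqref{eq:fibre_map} is an isomorphism for every closed point $v$, and its source $\OT_{[n]-[X]}(\A)$ is independent of $v$, so all the algebras $\OT_X(\A_v)$ are isomorphic to it and hence to each other. Correct, and the same approach.
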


\subsection{An algebra decomposition}
Now we connect the results above.  Let $X$ be a flat of an arrangement $\A$, and let $\pi\colon \OT(\A)\to \OT_{[n]-[X]}(\A)$ be the natural surjection.  Let $v\in\P M(\A)$ be generic.  If $C\in\bc_0(\T_X(\A))$, then $C$ is also a simplex of $\bc(\M(\A))$, by Theorem~\ref{thm:bc_modular}.
By Corollary~\ref{cor:relative_nbc}, then, there is an
additive map $\iota\colon \OT_X(\A_v)\to \OT(\A)$ given by inclusion of monomials supported on broken circuits.  By construction, $\iota$ is a section of the surjection $p\circ\pi\colon \OT(\A)\to \OT_X(\A_v)$ from Theorem~\ref{thm:fibre}.
We define a map of $\OT(\A_X)$-modules,
\begin{equation}\label{eq:alg_map}
\tau_X\colon \OT(\A_X)\otimes_\K \OT_X(\A_v)\to \OT(\A)
\end{equation}
by setting $\tau_X(x\otimes z)= x\cdot \iota(z)$, for
$x\in\OT(\A_X)$ and $z\in\OT_X(\A_v)$.  This map turns out to be
most interesting when $X$ is modular.

\begin{thm}\label{thm:modular_decomp}
$X$ is a modular flat of $\A$ if and only if $\tau_X$ is an isomorphism of $\OT(\A_X)$-modules: $\OT(\A)\cong\OT(\A_X)\otimes_\K\OT_X(\A_v)$.
\end{thm}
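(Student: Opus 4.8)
The plan is to prove the two implications separately, in both cases reducing to the combinatorial decomposition of the broken circuit complex established in Theorem~\ref{thm:bc_modular} by passing to initial ideals. Throughout, order the hyperplanes so that $X$ is an initial flat, so that $[X]=\set{1,\ldots,n_X}$ and $[n]-[X]=\set{n_X+1,\ldots,n}$; write $v$ for the generic point as in the statement.

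First suppose $X$ is modular. The source $\OT(\A_X)\otimes_\K \OT_X(\A_v)$ is the tensor product of two $\K$-algebras, hence the Stanley-Reisner ring of the join of their associated complexes (in the appropriate degenerated picture). Concretely, I would compute Hilbert series: by Theorem~\ref{thm:bc_modular} the join $\bc(\A)|_{[X]}*\bc(\A)|_{[n]-[X]}$ equals $\bc(\A)|_{[X]}*\bc_0(\T_X(\A))$, and by the Brylawski--Oxley decomposition \eqref{eq:bc_splits} this in turn equals $\bc(\A)$. Since $\iota$ lands in monomials supported on $\bc_0(\T_X(\A))$ and $\OT(\A_X)$ is spanned by monomials on $\bc(\A_X)=\bc(\A)|_{[X]}$ (using Theorem~\ref{thm:PS_grobner} for the arrangement $\A_X$), the map $\tau_X$ sends a spanning set of products $x\cdot\iota(z)$, indexed by pairs of faces of the two subcomplexes, to the (initial) monomials indexed by faces of the join. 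By \eqref{eq:bc_splits} these are exactly the standard monomials of $\OT(\A)$, so $\tau_X$ is surjective; comparing Hilbert series—$h(\OT(\A_X),t)\cdot h(\OT_X(\A_v),t)$ on the left, which by Terao's formula \eqref{eq:Terao_theorem} and its relative version Proposition~\ref{prop:relative_hs} together with Stanley--Brylawski \eqref{eq:pi_factors} equals $h(\OT(\A),t)$—forces $\tau_X$ to be an isomorphism. One has to be a little careful that $\tau_X$ is only a priori an $\OT(\A_X)$-module map and a $\K$-linear map, not an algebra map, but that is enough for the Hilbert series argument once surjectivity is in hand.

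For the converse, suppose $\tau_X$ is an isomorphism of $\OT(\A_X)$-modules. Then $\OT(\A)$ is a free $\OT(\A_X)$-module, and in particular, tensoring down along the splitting $s_X$ of Proudfoot--Speyer, the fibre $\OT(\A)\otimes_{\OT(\A_X)}\K \cong \OT_X(\A_v)$; but that fibre is by Definition~\ref{def:OT_0} the algebra $\OT_{[n]-[X]}(\A)$, so the surjection $p$ of Theorem~\ref{thm:fibre} is an isomorphism. By the second half of Theorem~\ref{thm:fibre} this already implies $X$ is modular. Alternatively, and more self-containedly, I would compare Hilbert series in the other direction: $h(\OT(\A),t)=h(\OT(\A_X),t)\cdot h(\OT_X(\A_v),t)$, and since $\bc_0(\T_X(\A))\subseteq\bc(\A)|_{[n]-[X]}$ always (Theorem~\ref{thm:bc_modular}), equality of the dimension count in each degree forces $\bc_0(\T_X(\A))=\bc(\A)|_{[n]-[X]}$, whence $X$ is modular by the equality clause of Theorem~\ref{thm:bc_modular}. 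Here one should note $\OT_X(\A_v)$ for generic $v$ has the same Hilbert series as $\bc_0(\T_X(\A))$ by Proposition~\ref{prop:fibre} and Corollary~\ref{cor:relative_nbc}.

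The main obstacle I anticipate is bookkeeping around initial terms: $\tau_X$ is defined on the nose (via reciprocals), not on the Stanley--Reisner degenerations, so to conclude surjectivity one must argue that the images $x\cdot\iota(z)$, which are honest elements of $\OT(\A)\subseteq\K(V)$, have leading terms that exhaust the standard monomial basis of $\In\OT(\A)$ under a suitable term order—lexicographic with $[X]$ initial, so that Proposition~\ref{prop:bc_subcomplex} and Corollary~\ref{cor:relative_nbc} apply simultaneously to both tensor factors. Once the leading-term combinatorics is aligned with \eqref{eq:bc_splits}, surjectivity of $\tau_X$ is immediate and the Hilbert series comparison finishes both directions uniformly; I would structure the write-up so that the term-order choice is fixed once at the start and the join identity \eqref{eq:bc_splits} (equivalently Theorem~\ref{thm:bc_modular}) is the single combinatorial input driving both implications.
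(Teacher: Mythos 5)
Your forward implication is essentially the paper's argument: order the hyperplanes so that $[X]$ is initial, combine Theorem~\ref{thm:bc_modular} with the Brylawski--Oxley join decomposition \eqref{eq:bc_splits} to identify $\In\OT(\A)$ with $\In\OT(\A_X)\otimes_\K\In\OT_X(\A_v)$, observe that the products $x\cdot\iota(z)$ exhaust the standard monomials of $\OT(\A)$ (so $\tau_X$ is surjective), and finish by the additive comparison. Your primary converse differs in detail from the paper's: you tensor the isomorphism $\tau_X$ down over $\OT(\A_X)$ to identify the fibre $\OT(\A)\otimes_{\OT(\A_X)}\K\cong\OT_{[n]-[X]}(\A)$ with $\OT_X(\A_v)$ and then invoke the ``only if'' half of Theorem~\ref{thm:fibre}, whereas the paper passes to initial ideals and applies \cite[Thm.~1.6]{BO81} directly. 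Your route is valid --- the tensored-down map is $\pi\circ\iota$, a surjective section of $p$, which forces $p$ to be an isomorphism --- and both routes ultimately rest on the same combinatorial input, Theorem~\ref{thm:bc_modular}.

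Your ``alternative, more self-contained'' converse via Hilbert series alone, however, does not close. From $h(\OT(\A),t)=h(\OT(\A_X),t)\,h(\OT_X(\A_v),t)$ together with the containment $\bc_0(\T_X(\A))\subseteq\bc(\A)|_{[n]-[X]}$, you only obtain that the face count of $\bc(\A_X)*\bc_0(\T_X(\A))$ is bounded above by that of $\bc(\A_X)*\bc(\A)|_{[n]-[X]}$ in each dimension, which is automatic and consistent with the containment being strict. To force equality of the two subcomplexes you would also need the face count of $\bc(\A)$ to be bounded \emph{below} by that of the join $\bc(\A_X)*\bc(\A)|_{[n]-[X]}$, i.e.\ that every union of a face of $\bc(\A_X)$ with a face of $\bc(\A)|_{[n]-[X]}$ is again a face of $\bc(\A)$ --- but that is precisely the Brylawski--Oxley criterion for modularity, so the argument is circular. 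Keep the first converse argument and drop the second.
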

\begin{proof}
Without loss, assume $X$ is initial.
Suppose $X$ is modular: then the broken circuit complex decomposes as a join, by \cite[Thm.~1.6]{BO81}.
\begin{eqnarray*}
\bc(\A)&\cong &\bc(\A_X)*\bc(\A)|_{[n]-[X]},\\
&\cong &\bc(\A)_{[X]}*\bc_0(\T_X(\A)),
\end{eqnarray*}
by Theorem~\ref{thm:bc_modular}.
Using Theorem~\ref{thm:PS_grobner} and isomorphism \eqref{eq:SR_def},
\begin{align}\nonumber
\In\OT(\A)\cong &\, \K[y_1,\ldots,y_n]/J_{\bc(\A)}\\ \nonumber
\cong &\, \K[y_1,\ldots,y_n]/(J_{\bc(\A_X)}+J_{\bc_0(\T_X(\A))}),\\
\cong &\,\In\OT(\A_X)\otimes_\K \In\OT_X(\A_v),
\label{eq:in_iso}
\end{align}
by \eqref{eq:SR_def} again and Corollary  \ref{cor:relative_nbc}.

If $y\in \OT(\A)$ is a monomial supported on a broken circuit, then,
we have $y=xz$ for such monomials $x\in\In\OT(\A_X)$ and $z\in\In\OT_X(\A_v)$.  By construction, $y=\tau_X(x\otimes z)$, so $\tau_X$ is surjective.  On the other hand, \eqref{eq:in_iso} shows the domain and codomain are additively isomorphic, so $\tau_X$ is an isomorphism of $\OT(\A_X)$-modules.

Conversely, if $\tau_X$ is an isomorphism, so is \eqref{eq:in_iso}, in which case $\bc(\A)$ decomposes as a join of $\bc(\A)_{[X]}$ with
another complex, in which case $X$ is modular by \cite[Thm.~1.6]{BO81}.
\end{proof}

\begin{cor}\label{cor:free}
If $X$ is a modular flat of $\A$, then $\OT(\A)$ is a free module over
$\OT(\A_X)$.
\end{cor}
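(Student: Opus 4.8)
The plan is to deduce this directly from Theorem~\ref{thm:modular_decomp}. That theorem furnishes an isomorphism of $\OT(\A_X)$-modules
\[
\OT(\A)\cong \OT(\A_X)\otimes_\K \OT_X(\A_v),
\]
in which the module structure on the right-hand side is by left multiplication on the first tensor factor (this is exactly how the map $\tau_X$ of \eqref{eq:alg_map} is defined). So the only thing left to observe is the elementary fact that a tensor product of the form $R\otimes_\K W$, with $R$ a $\K$-algebra and $W$ a $\K$-vector space, is a free $R$-module: if $\set{w_i}_{i\in I}$ is a $\K$-basis of $W$, then $\set{1\otimes w_i}_{i\in I}$ is an $R$-basis of $R\otimes_\K W$. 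Applying this with $R=\OT(\A_X)$ and $W=\OT_X(\A_v)$ completes the argument.

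If an explicit basis is wanted, I would read it off from the Gr\"obner-basis picture already used to prove Theorem~\ref{thm:modular_decomp}. Ordering the hyperplanes so that $[X]$ is initial, the monomials supported on simplices of $\bc_0(\T_X(\A))$ form a $\K$-basis of $\OT_X(\A_v)$ by Corollary~\ref{cor:relative_nbc} (they are the standard monomials of the initial ideal $J_{\bc_0(\A_{v,X})}$), and under $\tau_X$ their images $\iota(z)$ then form an $\OT(\A_X)$-basis of $\OT(\A)$. Equivalently, this basis is visible from the join decomposition $\bc(\A)\cong \bc(\A)|_{[X]}*\bc_0(\T_X(\A))$: monomials supported on nbc-sets of $\A$ factor uniquely as (monomial on an nbc-set of $\A_X$) times (monomial on a simplex of $\bc_0(\T_X(\A))$).

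There is essentially no obstacle here once Theorem~\ref{thm:modular_decomp} is in hand: freeness over the base field is the trivial case of freeness of a module. The only point requiring a moment's care is the bookkeeping, namely confirming that the $\OT(\A_X)$-module structure appearing in Theorem~\ref{thm:modular_decomp} is literally the one given by multiplication, so that the conclusion "free over $\OT(\A_X)$" is precisely the statement that $\OT(\A_X)\otimes_\K(\text{a }\K\text{-vector space})$ is a free $\OT(\A_X)$-module; this is immediate from the formula $\tau_X(x\otimes z)=x\cdot\iota(z)$.
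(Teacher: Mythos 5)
Your argument is correct and is exactly the one the paper intends: the corollary is stated without separate proof precisely because Theorem~\ref{thm:modular_decomp} gives $\OT(\A)\cong\OT(\A_X)\otimes_\K\OT_X(\A_v)$ as $\OT(\A_X)$-modules, and such a tensor product with a $\K$-vector space is automatically free. Your explicit basis via the nbc-monomials is a correct and harmless elaboration of the same construction already used in the proof of that theorem.
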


Using the map \eqref{eq:fibre_map}, there is also a $\OT(\A_X)$-module map
\begin{equation}\label{eq:fibre_decomp}
\tau_X\circ(1\otimes p)\colon \OT(\A_X)\otimes_\K \OT_{[n]-[X]}(\A)\to \OT(\A).
\end{equation}

Combining Theorems~\ref{thm:fibre} and \ref{thm:modular_decomp}, we
obtain the following.
\begin{cor}\label{cor:fibre_decomp}
$X$ is a modular flat of $\A$ if and only if the map \eqref{eq:fibre_decomp} is an isomorphism.
\end{cor}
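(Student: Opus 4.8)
The plan is to reduce everything to the two theorems already in hand, Theorem~\ref{thm:modular_decomp} and Theorem~\ref{thm:fibre}, and simply track how the map \eqref{eq:fibre_decomp} is built. Recall that \eqref{eq:fibre_decomp} is the composite $\tau_X\circ(1\otimes p)$, where $p\colon \OT_{[n]-[X]}(\A)\to\OT_X(\A_v)$ is the surjection of \eqref{eq:fibre_map} and $\tau_X\colon\OT(\A_X)\otimes_\K\OT_X(\A_v)\to\OT(\A)$ is the module map of \eqref{eq:alg_map}. The first and easiest direction: suppose $X$ is modular. Then Theorem~\ref{thm:fibre} says $p$ is an isomorphism, hence $1\otimes p$ is an isomorphism of $\OT(\A_X)$-modules; and Theorem~\ref{thm:modular_decomp} says $\tau_X$ is an isomorphism. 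A composite of isomorphisms is an isomorphism, so \eqref{eq:fibre_decomp} is an isomorphism.

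For the converse, assume the composite $\tau_X\circ(1\otimes p)$ is an isomorphism. I would argue that $\tau_X$ must then already be surjective: indeed, $\tau_X\circ(1\otimes p)$ surjective forces $\tau_X$ to be surjective since its image contains the image of the composite, which is all of $\OT(\A)$. Now examine this at the level of Hilbert series. By Proposition~\ref{prop:relative_hs} and the relative Terao formula there, together with Terao's formula \eqref{eq:Terao_theorem} for $\OT(\A_X)$ and for $\OT(\A)$, one computes that $\OT(\A_X)\otimes_\K\OT_{[n]-[X]}(\A)$ and $\OT(\A)$ have the same Hilbert series precisely when Brylawski's factorization \eqref{eq:pi_factors} holds — but I want to avoid circularity, so instead the cleaner route is: since $p$ is always surjective, $1\otimes p$ is surjective, and the composite being an isomorphism means in particular it is injective; hence $1\otimes p$ is injective on the portion of the source that $\tau_X$ sees faithfully. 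A more robust argument: the composite being an isomorphism makes $\tau_X$ a split surjection of $\OT(\A_X)$-modules (split by the composite's inverse followed by $1\otimes p$), and simultaneously it makes $p$ injective whenever $\tau_X$ is injective on the image of $1\otimes p$.

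Rather than untangle that, the step I expect to be the real content — and the main obstacle — is showing that if the composite is an isomorphism then $X$ is modular. I would do this by passing to initial ideals, exactly as in the proofs of Theorems~\ref{thm:fibre} and \ref{thm:modular_decomp}. Order the hyperplanes so $X$ is initial, take $v$ generic, and apply Proposition~\ref{prop:bc_subcomplex} and Corollary~\ref{cor:relative_nbc} to identify $\In\OT(\A)$, $\In\OT(\A_X)$, and $\In\OT_{[n]-[X]}(\A)$ with Stanley–Reisner rings of $\bc(\A)$, $\bc(\A)|_{[X]}$, and $\bc(\A)|_{[n]-[X]}$ respectively. The map \eqref{eq:fibre_decomp} is the identity on degree-one parts, so an isomorphism upstairs forces $J_{\bc(\A)} = J_{\bc(\A)|_{[X]}} + J_{\bc(\A)|_{[n]-[X]}}$, i.e. $\bc(\A)$ is the join $\bc(\A)|_{[X]} * \bc(\A)|_{[n]-[X]}$. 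By Brylawski–Oxley \cite[Thm.~1.6]{BO81} this is equivalent to $X$ being modular, which finishes the argument.

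Alternatively — and this is the cleanest packaging — one observes that the composite $\tau_X\circ(1\otimes p)$ being an isomorphism forces $p$ to be an isomorphism and $\tau_X$ to be an isomorphism individually: $p$ is always surjective and $\tau_X$ factors through $1\otimes p$, so injectivity of the composite gives injectivity of $1\otimes p$ on a complement of $\ker\tau_X$, but a Hilbert-series count (source and target of $\tau_X$ versus source and target of the composite) pins down $\dim_\K$ in each degree and shows $\ker p = 0$ and $\ker\tau_X = 0$. Then Theorem~\ref{thm:fibre} gives modularity from $p$ being an isomorphism. I would present the forward direction in full and handle the converse via this Hilbert-series/initial-ideal reduction, citing \cite[Thm.~1.6]{BO81} for the final equivalence; the bookkeeping of which map is injective versus surjective is the only place where care is needed, and I expect it to be short once the initial-ideal picture is in place.
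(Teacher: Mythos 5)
Your proposal is correct and follows essentially the same route as the paper: the forward direction by composing the isomorphisms from Theorems~\ref{thm:fibre} and \ref{thm:modular_decomp}, and the converse by ordering $[X]$ first, passing to initial ideals via Proposition~\ref{prop:bc_subcomplex} to obtain the join decomposition $\bc(\A)\cong\bc(\A)|_{[X]}*\bc(\A)|_{[n]-[X]}$, and invoking Brylawski--Oxley to conclude modularity. The exploratory middle paragraph about splittings and Hilbert series is unnecessary (and rightly abandoned), since the initial-ideal argument you settle on is exactly the paper's.
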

\begin{proof}
If $X$ is modular, then combining Theorems~\ref{thm:fibre} and \ref{thm:modular_decomp} shows \eqref{eq:fibre_decomp} is an isomorphism.  Conversely, suppose $\tau_X$ is an isomorphism.  Ordering
the hyperplanes of $\A$ so that the indices $[X]$ come first, we obtain an isomorphism
\[
\In(\OT(\A))\cong\In(\OT(\A_X))\otimes_\K \In(\OT_{[n]-[X]}(\A))
\]
By Proposition~\ref{prop:bc_subcomplex}, it follows that
the broken circuit complex decomposes as a join,
$\bc(\A)\cong\bc(\A)|_{[X]}*\bc(\A)|_{[n]-[X]}$.  By
\cite[Thm.~11]{BO81}, $X$ is modular.
\end{proof}

We note that the Orlik-Solomon algebra analogue of this result appears
as \cite[Cor.~5.5]{BZ91}.

\section{Applications}
\label{sec:apps}
The Modular Fibration Theorem~\ref{thm:fibration} was first formulated
in the case of $X$ a coatom, since then the fibre is
$1$-dimensional.  Similarly, our algebra decomposition
\eqref{eq:alg_map} can be refined in this case.
\subsection{The coatomic case}\label{ss:coatomic}
Suppose that $X$ is a modular coatom of an arrangement $\A$.
As usual, we order the hyperplanes of $\A$ so that
$\A_X=\set{H_1,\ldots,H_{n_X}}$.  For any $i,j$ satisfying $n_X<i<j\leq n$, since $X$ is modular, we have
\[
X+H_i\cap H_j=H,\quad\text{for some (unique) hyperplane $H\in\A_X$.}
\]
Let $i\circ j$ denote the integer in $[n_X]$ for which $H=H_{i\circ j}$.  Since $\set{i,j,i\circ j}$ is a circuit, we have
$f_{i\circ j}=a_{ij}f_i+b_{ij}f_j$, for some scalars $a_{ij}$ and $b_{ij}$, for all $n_X<i<j\leq n$.

\begin{thm}\label{thm:coatom_decomp}
Suppose $X$ is a modular coatom of $\A$.  There is an algebra isomorphism
\[
\OT(\A)\cong \OT(\A_X)[z_i\colon n_X<i\leq n]/J,
\]
where $J$ is the ideal generated by $z_iz_j-y_{i\circ j}(a_{ij} z_j+
b_{ij} z_i)$, for all $n_X<i<j\leq n$.
\end{thm}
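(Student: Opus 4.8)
The plan is to use the general modular decomposition from Theorem~\ref{thm:modular_decomp} and identify the fibre algebra $\OT_X(\A_v)$ explicitly in the coatom case. Since $X$ is a modular coatom, the fibre arrangement $\A_{v,X}$ has rank $2$ (it lives in $\AA^1\times X$ with $X$ a hyperplane, so its rank is $2$), and its matroid is $\T_X(\M(\A))$, whose ground set is $\set{0}\cup([n]-[X])$ with distinguished hyperplane indexed by $0$. By Example~\ref{ex:rel_rank2}, applied to the rank-$2$ arrangement $\A_v$ with the hyperplane $0$ played the role of $H_1$, we get
\[
\OT_X(\A_v)=\OT_0(\A_v)\cong\K[z_i\colon n_X<i\leq n]/(z_iz_j\colon n_X<i<j\leq n).
\]
Theorem~\ref{thm:modular_decomp} then gives an isomorphism of $\OT(\A_X)$-modules $\OT(\A)\cong\OT(\A_X)\otimes_\K\OT_X(\A_v)$, so as a graded $\OT(\A_X)$-module $\OT(\A)$ is free with basis the images under $\iota$ of the monomials $1$ and $z_i$ for $n_X<i\leq n$.

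Next I would pin down the algebra structure. Since $\iota$ sends each $z_i$ to (a unit multiple of) $y_i\in\OT(\A)$, it is natural to take the generators of $\OT(\A)$ over $\OT(\A_X)$ to be $y_i$ for $n_X<i\leq n$, renamed $z_i$. The module isomorphism already tells us $\set{1}\cup\set{z_i}$ is an $\OT(\A_X)$-basis, so the only relations are those expressing products $z_iz_j$ (for $n_X<i<j\leq n$) back in terms of this basis. To compute $z_iz_j$, use the circuit $\set{i,j,i\circ j}$: the relation $r_c\in I(\A)$ from \eqref{eq:OTrels} associated to $f_{i\circ j}=a_{ij}f_i+b_{ij}f_j$ is, up to a scalar, $y_iy_j - y_{i\circ j}(a_{ij}y_j+b_{ij}y_i)$ — indeed $1/f_i+1/f_j$ type identities: since $f_{i\circ j}=a_{ij}f_i+b_{ij}f_j$, dividing by $f_if_jf_{i\circ j}$ gives $\tfrac{1}{f_if_j}=a_{ij}\tfrac{1}{f_jf_{i\circ j}}+b_{ij}\tfrac{1}{f_if_{i\circ j}}$, i.e. $y_iy_j=y_{i\circ j}(a_{ij}y_j+b_{ij}y_i)$ in $\OT(\A)$. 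This is exactly the claimed relation, and the right-hand side already lies in $\OT(\A_X)\cdot z_j+\OT(\A_X)\cdot z_i$, consistent with the module basis. So the natural surjection $\OT(\A_X)[z_i\colon n_X<i\leq n]\twoheadrightarrow\OT(\A)$ (sending $z_i\mapsto y_i$) kills $J$.

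Finally I would check that $J$ is the entire kernel. The quotient $R:=\OT(\A_X)[z_i]/J$ is visibly spanned, as an $\OT(\A_X)$-module, by $1$ together with the $z_i$: any monomial in the $z_i$'s of degree $\ge2$ can be rewritten using the relations $z_iz_j=y_{i\circ j}(a_{ij}z_j+b_{ij}z_i)$ to reduce its $z$-degree, so $R$ is generated by $\set{1}\cup\set{z_i}$ over $\OT(\A_X)$. Under the surjection $R\to\OT(\A)$ these map to the $\OT(\A_X)$-basis of $\OT(\A)$ provided by Theorem~\ref{thm:modular_decomp}; a surjection of finitely generated modules whose images of a generating set form a basis of the target must be an isomorphism (e.g. compare Hilbert series, or note the generators must be a basis upstream too). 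Hence $R\cong\OT(\A)$ as algebras.

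The main obstacle is the last step: showing that reduction using the quadratic relations genuinely terminates and that no further relations among the $z_i$ are hidden — i.e. that $\set{1}\cup\set{z_i}$ really is an $\OT(\A_X)$-basis and not merely a spanning set. This is where Theorem~\ref{thm:modular_decomp} does the real work, supplying freeness; one should be slightly careful that the termination of the rewriting (each application of $z_iz_j\mapsto y_{i\circ j}(a_{ij}z_j+b_{ij}z_i)$ strictly decreases total $z$-degree) combined with the module freeness forces the surjection $R\to\OT(\A)$ to be injective, for instance by matching Hilbert series via \eqref{eq:Terao_theorem} and the Hilbert series of $\OT(\A_X)$ and of the rank-$2$ fibre from Example~\ref{ex:rel_rank2}.
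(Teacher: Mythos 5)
Your overall strategy is the paper's: invoke Theorem~\ref{thm:modular_decomp} together with Example~\ref{ex:rel_rank2} to get the $\OT(\A_X)$-module structure, check that the circuit relations for $\set{i,j,i\circ j}$ hold in $\OT(\A)$ (your computation of $r_C$ there is correct), and then argue that these generate the whole kernel. However, there is a concrete error in the last step. The fibre algebra is $\OT_X(\A_v)\cong\K[z_i\colon n_X<i\leq n]/(z_iz_j\colon n_X<i<j\leq n)$, and these monomial relations involve only \emph{distinct} indices, so all pure powers $z_i^k$ survive: the $\K$-basis of this algebra is $\set{1}\cup\set{z_i^k\colon k\geq1}$, consistent with the Hilbert series $1+(n-n_X)t/(1-t)$ recorded in Example~\ref{ex:rel_rank2}. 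Consequently $\set{1}\cup\set{z_i}$ is \emph{not} an $\OT(\A_X)$-basis of $\OT(\A)$ (if it were, $h(\OT(\A),t)/h(\OT(\A_X),t)$ would be the polynomial $1+(n-n_X)t$ rather than $1+(n-n_X)t/(1-t)$), and your rewriting claim fails already at $z_i^2$: no relation in $J$ involves $z_i^2$, so it cannot be reduced to $z$-degree one.

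The error is repairable without changing the architecture. In $R=\OT(\A_X)[z_i]/J$, every monomial in the $z$'s that is not a pure power contains some $z_iz_j$ with $i\neq j$, and applying the corresponding relation drops the $z$-degree by one; by induction, $R$ is spanned over $\OT(\A_X)$ by the infinite set $\set{z_i^k\colon k\geq0,\ n_X<i\leq n}$. By Theorem~\ref{thm:modular_decomp} and Example~\ref{ex:rel_rank2}, the images $\set{1}\cup\set{y_i^k\colon k\geq1}$ of that set form an $\OT(\A_X)$-basis of $\OT(\A)$, and your ``generating set mapping to a basis'' argument then gives injectivity of $R\to\OT(\A)$ exactly as you intended. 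With that correction the proof matches the paper's, which simply asserts that the relations $r_C$ for $C=\set{i,j,i\circ j}$ generate the kernel of $\OT(\A_X)[z_i\colon n_X<i\leq n]\to\OT(\A)$.
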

\begin{proof}
By Theorem~\ref{thm:modular_decomp}, as $\OT(\A_X)$-modules,
\begin{eqnarray*}
\OT(\A)&\cong& \OT(\A_X)\otimes_\K \OT_X(\A_v)\\
&\cong&\OT(\A_X)\otimes_\K \K[z_i\colon n_X<i\leq n]/(z_iz_j\colon
n_X<i<j\leq n),
\end{eqnarray*}
using Example~\ref{ex:rel_rank2}.  To specify the structure of
$\OT(\A)$ as an $\OT(\A_X)$-algebra, it is enough to observe that
the relations $r_C$ from \eqref{eq:OTrels} for the circuits
$C=\set{i,j,i\circ j}$ generate the kernel of the natural
map $\OT(\A_X)[z_i\colon n_X<i\leq n]\to \OT(\A)$.
\end{proof}
\subsection{$\Tor$ algebras and the Koszul property}
\label{ss:Tor}
If $X$ is a flat of $\A$, we consider the Eilenberg-Moore spectral
sequence of the fibre sequence
\[
\OT(\A_X)\hookrightarrow \OT(\A)\twoheadrightarrow \OT_{[n]-[X]}(\A).
\]
If $X$ is modular, by Corollary~\ref{cor:free}, $\OT(\A)$ is a free
$\OT(\A_X)$-module.  So by \cite[XVI.6.1]{CEbook}, we have
\begin{equation}\label{eq:TorSS}
E^2_{pq}=\Tor^{\OT_{[n]-[X]}(\A)}_p(\Tor^{\OT(\A_X)}_q(A,\K),B)\Rightarrow
\Tor^{\OT(\A)}_{p+q}(A,B)
\end{equation}
for any $\OT(\A_X)$-module $A$ and $\OT(\A)$-module $B$.
Recall that a standard graded $\K$-algebra $A$ is {\em Koszul} if
$\Tor^A_p(\K,\K)_q=0$ for $p\neq q$.
\begin{lem}\label{lem:rel_koszul}
Let $H$ be a hyperplane of an arrangement $\A$.  Then $\OT(\A)$ is Koszul if and only if $\OT_H(\A)$ is Koszul.
\end{lem}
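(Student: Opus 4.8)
The plan is to use the short exact sequence \eqref{eqn:multbyy_i}, which exhibits $\OT_H(\A)$ as $\OT(\A)/(y_i)$ where $y_i$ is a nonzerodivisor (since $\OT(\A)$ is a domain), together with the fact that $y_i$ has degree $1$. The standard principle here is that modding out a graded algebra by a degree-one nonzerodivisor preserves the Koszul property in both directions. I would cite this as a known fact (it follows, for instance, from the change-of-rings spectral sequence for $\Tor$, or from the behavior of Poincar\'e series under such a quotient), but since the excerpt has just set up the Eilenberg--Moore/change-of-rings machinery in \eqref{eq:TorSS}, I would instead give a short self-contained argument along those lines.

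First I would observe that $\OT(\A)$ is a free module over the polynomial subring $\K[y_i]$: indeed, $y_i$ is a nonzerodivisor in the domain $\OT(\A)$, and a graded module over $\K[y_i]$ that is torsion-free is free. Hence by \cite[XVI.6.1]{CEbook} there is a change-of-rings spectral sequence
\[
E^2_{pq}=\Tor^{\OT_H(\A)}_p\bigl(\Tor^{\K[y_i]}_q(\OT(\A),\K),\K\bigr)\Rightarrow \Tor^{\OT(\A)}_{p+q}(\K,\K).
\]
Since $\K[y_i]$ is a polynomial ring in one variable and $\OT(\A)$ is $\K[y_i]$-free, we have $\Tor^{\K[y_i]}_q(\OT(\A),\K)=0$ for $q>1$, while for $q=0$ it is $\OT(\A)/(y_i)=\OT_H(\A)$ and for $q=1$ it is the degree-shifted submodule $\ker(y_i)=0$—wait: more precisely $\Tor^{\K[y_i]}_0(\OT(\A),\K)\cong\OT_H(\A)$ and $\Tor^{\K[y_i]}_1(\OT(\A),\K)=0$ because $\OT(\A)$ is $\K[y_i]$-free. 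So the spectral sequence collapses to the single edge $q=0$, giving an isomorphism of bigraded vector spaces
\[
\Tor^{\OT(\A)}_{p}(\K,\K)_r\;\cong\;\Tor^{\OT_H(\A)}_p(\K,\K)_r
\]
for all $p,r$. This is precisely the statement that $\OT(\A)$ is Koszul if and only if $\OT_H(\A)$ is.

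I expect the main subtlety to be bookkeeping of the internal grading: one must track that the degree-one generator $y_i$ shifts things so that $\Tor^{\OT(\A)}_p(\K,\K)$ and $\Tor^{\OT_H(\A)}_p(\K,\K)$ agree in the \emph{same} internal degree (not off by one), which is what makes the ``$\Tor_p$ concentrated in degree $p$'' condition transfer correctly. Since $\K[y_i]$ contributes a Koszul (hence diagonal) factor, this is automatic, but it is worth stating carefully. An alternative, even quicker, route is purely via Hilbert/Poincar\'e series: \eqref{eqn:multbyy_i} gives $h(\OT_H(\A),t)=(1-t)\,h(\OT(\A),t)$, and the generating-function form of the Koszul property, $h(A,t)\cdot P_A(-1,t)=1$ where $P_A(u,t)=\sum_{p,q}\dim\Tor^A_p(\K,\K)_q\,u^p t^q$, together with the change-of-rings identity $P_{\OT(\A)}(u,t)=(1+ut)\,P_{\OT_H(\A)}(u,t)$ coming from the free resolution of $\K$ over $\OT(\A)$ built from that over $\OT_H(\A)$ tensored with the Koszul complex on $y_i$, yields the equivalence directly. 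Either way the content is the elementary fact that quotienting by a linear nonzerodivisor is a Koszul-preserving and Koszul-reflecting operation, and I would present the spectral-sequence version to match the surrounding discussion.
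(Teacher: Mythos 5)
Your overall strategy---reducing to the fact that killing a degree-one nonzerodivisor preserves and reflects Koszulness---is exactly what the paper does: its proof observes that \eqref{eqn:multbyy_i} makes $\OT(\A)\to\OT_H(\A)$ a Koszul morphism and cites \cite[\S 2.5, Example~1]{QAbook}. However, your main argument contains a genuine error. The change-of-rings spectral sequence for the extension $\K[y_i]\hookrightarrow\OT(\A)\twoheadrightarrow\OT_H(\A)$ (the analogue of \eqref{eq:TorSS} with $A=B=\K$) has
$E^2_{pq}=\Tor^{\OT_H(\A)}_p\bigl(\Tor^{\K[y_i]}_q(\K,\K),\K\bigr)$, not
$\Tor^{\OT_H(\A)}_p\bigl(\Tor^{\K[y_i]}_q(\OT(\A),\K),\K\bigr)$; with your inner term the $E^2$-page would be $\Tor^{\OT_H(\A)}_p(\OT_H(\A),\K)$, which vanishes for $p>0$ and would force $\Tor^{\OT(\A)}_m(\K,\K)=0$ for $m>0$, which is absurd. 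With the correct inner term, $\Tor^{\K[y_i]}_q(\K,\K)$ is a copy of $\K$ in bidegree $(q,q)$ for $q=0,1$ and zero otherwise, so there are \emph{two} nonvanishing rows, and the conclusion you draw---$\Tor^{\OT(\A)}_p(\K,\K)_r\cong\Tor^{\OT_H(\A)}_p(\K,\K)_r$ for all $p,r$---is false: already $\dim\Tor_1^{\OT(\A)}(\K,\K)_1=n$ while $\dim\Tor_1^{\OT_H(\A)}(\K,\K)_1=n-1$. The correct statement is $P_{\OT(\A)}(s,t)=(1+st)\,P_{\OT_H(\A)}(s,t)$, i.e.\ $\Tor^{\OT(\A)}_m(\K,\K)$ is $\Tor^{\OT_H(\A)}_m(\K,\K)$ plus a copy of $\Tor^{\OT_H(\A)}_{m-1}(\K,\K)$ shifted up by one in internal degree; this still yields the lemma, since the extra factor $(1+st)$ is supported on the diagonal.

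To make this rigorous you must either justify degeneration of the two-row spectral sequence (for ``$\OT_H(\A)$ Koszul $\Rightarrow$ $\OT(\A)$ Koszul'' this is automatic on internal-degree grounds; the converse requires a separate argument, e.g.\ a minimal off-diagonal class survives to $E^\infty$), or simply quote the classical bigraded isomorphism $\Tor^{A}(\K,\K)\cong\Tor^{A/(x)}(\K,\K)\otimes\Lambda(e)$ for $x$ a regular element of degree one, which is precisely what the citation to \cite{QAbook} supplies. Note also that your fallback via $h(A,t)\cdot P_A(-1,t)=1$ does not close the argument: that numerical identity is necessary but not sufficient for Koszulness, so it cannot be used to transfer the property in either direction; only the full bigraded identity $P_{\OT(\A)}(s,t)=(1+st)P_{\OT_H(\A)}(s,t)$ does the job.
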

\begin{proof}
From the short exact sequence \eqref{eqn:multbyy_i}, the
map $\OT(\A)\to \OT_H(\A)$ is Koszul, and the claim follows by \cite[\S2.5, Example 1]{QAbook}.
\end{proof}

\begin{thm}\label{thm:2of3_Koszul}
Suppose that $X$ is a modular flat of $\A$.  If $\OT_X(\A_v)$ and
$\OT(\A_X)$ are Koszul, then so is $\OT(\A)$.
\end{thm}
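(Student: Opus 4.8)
The plan is to exploit the spectral sequence \eqref{eq:TorSS} with $A=B=\K$. Since $X$ is modular, Corollary~\ref{cor:free} tells us $\OT(\A)$ is free over $\OT(\A_X)$, so the Eilenberg--Moore (Cartan--Eilenberg) spectral sequence of the fibre sequence $\OT(\A_X)\hookrightarrow\OT(\A)\twoheadrightarrow\OT_{[n]-[X]}(\A)$ takes the form
\[
E^2_{pq}=\Tor^{\OT_{[n]-[X]}(\A)}_p\!\bigl(\Tor^{\OT(\A_X)}_q(\K,\K),\K\bigr)\Rightarrow\Tor^{\OT(\A)}_{p+q}(\K,\K).
\]
Here $\Tor^{\OT(\A_X)}_q(\K,\K)$ is a trivial $\OT_{[n]-[X]}(\A)$-module, so the $E^2$-term splits as a tensor product $\Tor^{\OT_{[n]-[X]}(\A)}_p(\K,\K)\otimes_\K\Tor^{\OT(\A_X)}_q(\K,\K)$.

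First I would track internal degrees. By hypothesis $\OT(\A_X)$ is Koszul, so $\Tor^{\OT(\A_X)}_q(\K,\K)$ is concentrated in internal degree $q$. By Theorem~\ref{thm:fibre}, modularity of $X$ gives $\OT_{[n]-[X]}(\A)\cong\OT_X(\A_v)$, which is Koszul by the other hypothesis, so $\Tor^{\OT_{[n]-[X]}(\A)}_p(\K,\K)$ is concentrated in internal degree $p$. Hence $E^2_{pq}$ lives entirely in internal degree $p+q$. Since everything in total degree $p+q$ on the $E^2$-page already sits in the single internal degree $p+q$, and differentials preserve internal degree while changing total degree, all higher differentials vanish for degree reasons and the spectral sequence degenerates at $E^2$. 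Therefore $\Tor^{\OT(\A)}_{n}(\K,\K)=\bigoplus_{p+q=n}E^2_{pq}$ is concentrated in internal degree $n$, which is precisely the statement that $\OT(\A)$ is Koszul.

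Alternatively, and perhaps more cleanly, I would factor the argument through Lemma~\ref{lem:rel_koszul}: by that lemma $\OT(\A)$ is Koszul if and only if $\OT_H(\A)$ is, and one can try to realize the relevant map $\OT(\A_X)\to\OT(\A)$ as a Koszul morphism in the sense of \cite[\S2.5]{QAbook}, using freeness from Corollary~\ref{cor:free} together with the decomposition $\OT(\A)\cong\OT(\A_X)\otimes_\K\OT_X(\A_v)$ of Theorem~\ref{thm:modular_decomp}; then the standard ``two out of three'' principle for Koszul morphisms finishes the proof once both factors are Koszul. Either route reduces the theorem to bookkeeping with the grading.

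The main obstacle I anticipate is making the degeneration argument fully rigorous: one must be careful that the $E^2$ identification as a tensor product genuinely respects internal gradings (the module $\Tor^{\OT(\A_X)}_q(\K,\K)$ must be given its correct internal degree $q$ as an $\OT_{[n]-[X]}(\A)$-module, not degree $0$), and that the spectral sequence is one of internally graded objects so that ``differentials preserve internal degree'' is legitimate. Once that is set up, the collapse is automatic; the subtlety is entirely in the grading conventions, and in citing \cite[XVI.6.1]{CEbook} in a form that records the module structure on the fibre homology precisely enough to run the degree count.
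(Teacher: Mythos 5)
Your main argument is exactly the paper's proof: the authors invoke the spectral sequence \eqref{eq:TorSS} with $A=B=\K$, use Theorem~\ref{thm:fibre} to identify $\OT_{[n]-[X]}(\A)$ with $\OT_X(\A_v)$, and conclude ``by examining the grading,'' which is precisely the internal-degree bookkeeping you spell out. Your version is correct and in fact more detailed than the paper's one-line justification (note that degeneration at $E^2$ is not even needed, since $E^\infty$ is a subquotient of $E^2$ and hence already concentrated in internal degree equal to total degree).
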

\begin{proof}
By Theorem~\ref{thm:fibre}, we have $\OT_{[n]-[X]}(\A)\cong \OT_X(\A_v)$,
which is Koszul if and only if $\OT(\A_v)$ is, by Lemma~\ref{lem:rel_koszul}.  The claim follows directly by examining the grading in \eqref{eq:TorSS}, taking $A=C=\K$.
\end{proof}
For rank-$2$ arrangements, we may compute directly.
\begin{lem}\label{lem:rank2koszul}
If $\A$ is an arrangement of rank $\ell\leq2$, then $\OT(\A)$ is Koszul.
\end{lem}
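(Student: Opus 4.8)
The plan is to handle the trivial cases first and then reduce the substantive case, $\ell = 2$, to an explicit calculation already laid out in Example~\ref{ex:rel_rank2}. If $\ell = 0$, then $\A$ is empty and $\OT(\A) = \K$, which is Koszul. If $\ell = 1$, then $\A$ consists of a single hyperplane, $Y(\A)$ is a point, $\OT(\A) = \K[y_1]$ is a polynomial ring, and polynomial rings are Koszul. So the content is the case $\ell = 2$.

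For $\ell = 2$, fix any ordering of the $n$ hyperplanes and take $H = H_1$ to be first. By Lemma~\ref{lem:rel_koszul}, $\OT(\A)$ is Koszul if and only if the relative algebra $\OT_{H_1}(\A)$ is Koszul. By Example~\ref{ex:rel_rank2}, since every $3$-element subset of $\set{2,\ldots,n}$ is a circuit of the rank-$2$ arrangement, we have the explicit presentation
\[
\OT_{H_1}(\A) \cong \K[y_2,\ldots,y_n]/(y_i y_j \colon 2\leq i<j\leq n).
\]
This is a monomial (Stanley--Reisner) ring whose defining ideal is generated by squarefree quadrics, i.e.\ the Stanley--Reisner ring of the $0$-dimensional simplicial complex on $\set{2,\ldots,n}$ with no edges. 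Such an algebra is Koszul: its defining ideal has a quadratic Gr\"obner basis (the generators themselves, being monomials, already form one), and a $G$-algebra — one with a quadratic Gr\"obner basis — is Koszul by the standard argument (see \cite{QAbook}). Alternatively, the minimal free resolution of $\K$ over a Stanley--Reisner ring of a flag complex is linear. Either way, $\OT_{H_1}(\A)$ is Koszul, hence so is $\OT(\A)$.

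I expect no serious obstacle here; the lemma is essentially a packaging of Example~\ref{ex:rel_rank2} with Lemma~\ref{lem:rel_koszul} and the classical fact that quadratic monomial algebras are Koszul. The one point requiring a word of care is the degenerate subcases within $\ell = 2$ (e.g.\ $n = 2$, where the ideal in the displayed presentation is zero and $\OT_{H_1}(\A) = \K[y_2]$), but these are still polynomial rings and visibly Koszul, so the argument goes through uniformly. A secondary alternative route, if one prefers not to invoke $G$-algebras, is to note that $\OT(\A)$ itself for $\ell = 2$ has the presentation $\K[y_1,\ldots,y_n]/(r_C)$ with all $r_C$ the quadrics $c_i y_j y_k + c_j y_i y_k + c_k y_i y_j$ over $3$-subsets $C$, which by Theorem~\ref{thm:PS_grobner} form a universal Gr\"obner basis with quadratic initial ideal $J_{\bc(\A)}$; the broken circuit complex of a rank-$2$ arrangement is a cone over a $0$-dimensional complex, hence flag, so again $\OT(\A)$ is a $G$-algebra and therefore Koszul.
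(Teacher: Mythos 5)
Your proposal is correct and follows essentially the same route as the paper: reduce via Lemma~\ref{lem:rel_koszul} to the relative algebra $\OT_{H_1}(\A)$, identify it via Example~\ref{ex:rel_rank2} as a quotient by quadratic monomials, and invoke the standard fact that such algebras are Koszul (the paper cites Anick where you cite the quadratic Gr\"obner basis criterion, but this is the same classical fact). The extra attention to degenerate cases and the alternative argument via Theorem~\ref{thm:PS_grobner} are fine but not needed.
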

\begin{proof}
If $\ell=1$, the claim is immediate.
If $\ell=2$, by Example~\ref{ex:rel_rank2}, for any $H\in\A$, the algebra $\OT_H(\A)$ is a quotient of a polynomial ring by quadratic monomials.
It follows $\OT_H(\A)$ is Koszul, by \cite{An86}, and so is $\OT(\A)$, by Lemma~\ref{lem:rel_koszul}.
\end{proof}

Note that combining the last two statements gives an inductive proof
of Theorem~\ref{thm:koszul}.

\begin{example}\label{ex:X2}
The $X_2$ arrangement is defined by $Q=xyz(x+y)(x-z)(y-z)(x+y-2z)$.  A computation using Macaulay~2~\cite{M2} shows that the Orlik-Terao algebra is quadratic, but not Koszul.
\end{example}

An arrangement is said to be {\em $2$-formal} if the vector space of linear relations amongst the polynomials $\set{f_i}_{i=1}^n$ are generated by those from circuits of size three: see \cite{st} for a full discussion of this property.

\begin{cor}\label{cor:implications}
For any arrangement $\A$ we have
\begin{gather*}
\text{$\A$ is supersolvable}\Rightarrow \text{$\OT(\A)$ is a $G$-algebra}\Rightarrow\\
\text{$\OT(\A)$ is Koszul}\Rightarrow \text{$\OT(\A)$ is quadratic} \Rightarrow \text{$\A$ is $2$-formal.}
\end{gather*}
The last three implications are strict.
\end{cor}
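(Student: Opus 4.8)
The first implication is exactly the observation made just after Theorem~\ref{thm:PS_grobner}: when $\A$ is supersolvable, the iterated join decomposition~\eqref{eq:bc_splits} shows $\bc(\A)$ is a flag complex, so $J_{\bc(\A)}$ is generated in degree~$2$, and by Theorem~\ref{thm:PS_grobner} the relations $\set{r_c}$ form a quadratic \std\ basis; that is, $\OT(\A)$ is a $G$-algebra. The second implication is the content of Theorem~\ref{thm:koszul} (a $G$-algebra is Koszul, cf.~\cite{QAbook}). For the third, recall that a Koszul algebra is quadratic: $\Tor^A_2(\K,\K)$ is concentrated in degree~$2$, and since this group carries the defining relations of $A$, the ideal $I(\A)$ is generated by quadrics. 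Finally, if $\OT(\A)$ is quadratic, then $I(\A)$ is generated by the degree-$2$ relations $r_c$ coming from three-element circuits by Theorem~\ref{thm:PS_grobner}; but a three-element circuit $\set{i,j,k}$ is exactly a linear dependence $c_if_i+c_jf_j+c_kf_k=0$, so the span of such relations is the space of linear relations among the $f_i$ generated by size-three circuits. Quadraticity forces this to be everything, which is the definition of $2$-formality (see~\cite{st}). This proves the chain of implications.

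To see that the last three implications are strict, I would exhibit counterexamples to each converse. For ``$G$-algebra $\not\Leftarrow$ supersolvable'' one needs a non-supersolvable arrangement with a quadratic \std\ basis; for ``Koszul $\not\Leftarrow$ $G$-algebra'' one needs a Koszul but non-$G$ arrangement (Koszulness is basis-independent, $G$ is not); and for ``quadratic $\not\Leftarrow$ Koszul'' the $X_2$ arrangement of Example~\ref{ex:X2} already does the job, since a Macaulay~2 computation shows $\OT(X_2)$ is quadratic but not Koszul. I would cite Example~\ref{ex:X2} directly for this last one, and point to the non-supersolvable arrangements treated later in Sections~\ref{sec:apps} and~\ref{sec:ci} (or the $X_3$ arrangement of Example~\ref{ex:a3vsx3}, or a generic arrangement of four lines in $\P^2$, whose $\OT$ is quadratic and in fact a complete intersection but which is not supersolvable) for the remaining two.

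The main obstacle is the strictness claims rather than the implications, which are essentially definitional once Theorems~\ref{thm:PS_grobner} and~\ref{thm:koszul} are in hand. Specifically, separating ``Koszul'' from ``$G$-algebra'' requires either an explicit arrangement whose Orlik-Terao algebra is Koszul under no monomial order yet Koszul as an algebra, or an appeal to a general principle; I expect to fall back on a small concrete example verified by computer, analogous to Example~\ref{ex:X2}. The $2$-formal-but-not-quadratic direction is the subtlest conceptually, since $2$-formality only controls the \emph{linear} syzygies of the $f_i$ while quadraticity of $\OT(\A)$ is about the \emph{ideal} $I(\A)$; a $2$-formal arrangement whose higher circuits force a cubic relation not implied by the quadrics gives the required separation, and I would document one such arrangement explicitly (again the $X_3$ arrangement, or one of Falk's examples from~\cite{Fa01}, is a natural candidate).
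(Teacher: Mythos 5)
Your chain of implications follows the paper's route: the first two are the content of Theorem~\ref{thm:koszul} and the discussion preceding it, and ``Koszul $\Rightarrow$ quadratic'' is immediate. For ``quadratic $\Rightarrow$ $2$-formal,'' however, your assertion that ``quadraticity forces this to be everything'' is precisely the point that requires an argument. The paper makes it precise by a codimension count: if $I(\A)$ is quadratic then $\codim(I_2(\A))=\codim(I(\A))=n-\ell$, and by \cite[Theorem~2.4]{st} the condition $\codim(I_2(\A))=n-\ell$ is \emph{equivalent} to $2$-formality. Without that cited equivalence, your step from ``the quadrics $r_c$ generate the ideal $I(\A)$'' to ``the $3$-circuit relations span the space of linear relations among the $f_i$'' is a non sequitur: the former is a statement about an ideal of $\K[y_1,\ldots,y_n]$, the latter about a subspace of $\ker(\K^n\to V^*)$, and the bridge between them is exactly the codimension identity you would need to quote or prove.

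The larger gap is in the strictness claims. You correctly use Example~\ref{ex:X2} to separate ``quadratic'' from ``Koszul,'' but for ``$2$-formal but not quadratic'' you only list candidates without verifying one; the paper commits to the non-Fano arrangement $Q=xyz(x-y)(x-z)(y-z)(x+y-z)$ of \cite[Example~1.7]{st}. (Your candidate $X_3$ does in fact work --- it has three triple points, hence is $2$-formal, while its Poincar\'e polynomial $(1+t)(1+5t+7t^2)$ is not of the form in Theorem~\ref{thm:qCI}(3), so its ideal cannot be generated by the three quadrics --- but that verification is missing from your write-up.) Your parenthetical example for the first implication is wrong: a generic arrangement of four planes of rank $3$ has a single circuit of size four, so $I(\A)$ is generated by one cubic and $\OT(\A)$ is neither quadratic nor a $G$-algebra; fortunately the corollary does not assert strictness of that implication. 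Finally, you are right that separating ``Koszul'' from ``$G$-algebra'' requires an example you do not have; note that the paper's own proof does not supply one either, so your honesty about this obstacle reflects the actual state of the argument rather than a defect relative to it.
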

\begin{proof}
The two implications come from Theorem~\ref{thm:koszul}.  The next is immediate, and Example~\ref{ex:X2} shows it is not reversible.  For the last implication, let $I_2(\A)$ denote the ideal generated by the degree-$2$ elements of $I(\A)$.  Then
if $I(\A)$ is quadratic, $\codim(I_2(\A))=\codim(I(\A))=n-\ell$.  By \cite[Theorem~2.4]{st}, this is equivalent to $\A$ being 2-formal.

However, the converse is not true: consider the non-Fano arrangement $\A$ (\cite{st}, Example 1.7), defined by $Q(\A)=xyz(x-y)(x-z)(y-z)(x+y-z)$.  Although $\A$ is $2$-formal, $I(\A)$ is not quadratic.
\end{proof}
\begin{question}\label{ques:koszul}
Do there exist arrangements $\A$ for which $\OT(\A)$ is Koszul, yet $\A$ is not supersolvable?  The analogous problem is also open for cohomology rings (\cite[\S5]{ShYuz}.)
\end{question}

\subsection{Restrictions and resolutions}
\label{ss:restriction}
Even in some cases for which the Orlik-Terao algebra is not Koszul, it is still possible to describe part of the algebra $\Tor^{\OT(\A)}(\K,\K)$ explicitly.

Suppose that $W\subseteq V$ are both linear subspaces of $\K^n$
and $V\not\subseteq \hat{H}_i$ for $1\leq i\leq n$, as in \S\ref{ss:intro1}.  If $\A=\set{\hat{H_i}\cap V\colon i\in [n]}$ as before, let $I=\set{i\in[n]\colon W\not\subseteq\hat{H_i}}$.
arrangement $\A^W:=\set{\hat{H_i}\cap W\colon i\in I}$ is called the
{\em restriction of $\A$ to $W$}.  Let $M(\A^W)=W\cap(\K^*)^{I}$,
the complement of the hyperplanes $\A^W$.  We say that the restriction $\A^W$ is $k$-generic if $L_p(\A^W)\cong L_p(\A)$ for $p\leq k$.
We note that the condition that $\A^W$ is $1$-generic is equivalent to $I=[n]$, in which case $Y(\A^W)$ is a subscheme of $Y(\A)$.

\begin{prop}\label{prop:1generic}
If $\A^W$ is a $1$-generic restriction, then the map $\OT(\A)\to \OT(\A^W)$ is surjective.  The kernel is
\[
I_\A(\A^W):=\left(\set{r_c\colon \text{$c$ is a circuit of $\A^W$ and not of $\A$}}\right).
\]
If, moreover, $\A^W$ is a $p$-generic restriction for $p\geq1$, then
$I_\A(\A^W)$ is generated in degrees strictly greater than $p$.
\end{prop}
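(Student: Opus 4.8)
The plan is to deduce all three statements from the Gr\"obner basis description of Theorem~\ref{thm:PS_grobner}, applied to both $\A$ and $\A^W$.  Write $g_i=f_i|_W$; since $\A^W$ is $1$-generic we have $I=[n]$ and every $g_i$ is a nonzero linear form on $W$, so restriction of rational functions from $V$ to $W$ is defined on $\OT(\A)\subseteq\K(V)$ and sends $1/f_i\mapsto 1/g_i$.  This gives a ring homomorphism $\OT(\A)\to\OT(\A^W)$, surjective because the $1/g_i$ generate $\OT(\A^W)$.  To pin down the kernel I would factor the surjection $S=\K[y_1,\ldots,y_n]\to\OT(\A^W)$, $y_i\mapsto 1/g_i$, through $\OT(\A)=S/I(\A)$: by Theorem~\ref{thm:PS_grobner} applied to $\A^W$ its kernel is $I(\A^W)$, and it contains $I(\A)$ automatically, so the kernel of $\OT(\A)\to\OT(\A^W)$ is $I(\A^W)/I(\A)$.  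The underlying observation is that restriction preserves linear dependence, so $\M(\A^W)$ is a weak image of $\M(\A)$ under the identity on $[n]$; in particular $\rk_{\A^W}\leq\rk_{\A}$ on all subsets of $[n]$, and every circuit of $\M(\A)$ is dependent in $\M(\A^W)$.

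Next I would identify $I(\A^W)/I(\A)$ with $I_\A(\A^W)$.  By Theorem~\ref{thm:PS_grobner}, $I(\A^W)$ is generated by the elements $r_c$ for $c\in\CC(\M(\A^W))$.  Split these circuits into those that are also circuits of $\M(\A)$, whose $r_c$ lie in $I(\A)$, and those that are not, which are precisely the generators appearing in the definition of $I_\A(\A^W)$.  Hence $I(\A^W)=I(\A)+(r_c:c\in\CC(\M(\A^W))\setminus\CC(\M(\A)))$, so the kernel is the image of this last ideal in $\OT(\A)$, namely $I_\A(\A^W)$.  This settles the first two claims.

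For the degree bound, by the description just obtained it suffices to show that every circuit $c$ of $\M(\A^W)$ with $\abs{[c]}\leq p+1$ is already a circuit of $\M(\A)$: the remaining generators $r_c$ of $I_\A(\A^W)$ then have degree $\abs{[c]}-1\geq p+1>p$.  The argument I have in mind runs as follows.  Set $C=[c]$ and $q=\abs{C}-1\leq p$.  Since $C$ is a circuit of $\M(\A^W)$, its closure in $\A^W$ is a flat of rank $q\leq p$; invoking $p$-genericity --- which identifies $L_q(\A^W)$ with $L_q(\A)$ as collections of subsets of $[n]$ --- this flat is also a flat of $\A$ of rank $q$, and since it contains $C$ we get $\rk_{\A}(C)\leq q<\abs{C}$, so $C$ is dependent in $\M(\A)$.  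On the other hand, each proper subset $S\subsetneq C$ is independent in $\M(\A^W)$, and then $\abs{S}=\rk_{\A^W}(S)\leq\rk_{\A}(S)\leq\abs{S}$ forces $S$ independent in $\M(\A)$; so $C$ is a minimal dependent set of $\M(\A)$, i.e.\ a circuit.

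The first two paragraphs are routine manipulation of the two presentations.  The step I expect to require the most care is the passage in the last paragraph from the lattice-theoretic hypothesis defining $p$-genericity to the concrete conclusion about individual circuits: one must be precise about the meaning of $L_q(\A^W)\cong L_q(\A)$ and about its compatibility with the common ground set $[n]$, since that compatibility is exactly what lets a low-rank flat of $\A^W$ be recognized as a flat of $\A$ of the same rank.
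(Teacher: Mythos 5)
Your proposal is correct and follows the same route as the paper, whose entire proof is the one-line remark that the statement ``follows immediately from Theorem~\ref{thm:PS_grobner}''; you have simply written out the details that the authors leave implicit. In particular, your careful handling of the last step --- using $p$-genericity plus the inequality $\rk_{\M(\A^W)}\le\rk_{\M(\A)}$ to show that every circuit of $\A^W$ of size at most $p+1$ is already a circuit of $\A$ --- is exactly the content being invoked.
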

\begin{proof}
Follows immediately from Theorem~\ref{thm:PS_grobner}.
\end{proof}

If $\A^W$ is a $2$-generic restriction, then any circuits of $\A^W$ which are not circuits of $\A$ have at least $4$ elements.  It follows that the ideal $I_\A(\A^W)$ is zero in degrees $\leq 2$.
Since $\Tor_p^{\OT(\A)}(\K,\K)_p$, depends only
on the degree $\leq2$ part of $\OT(\A)$, for $p\geq0$, we have
\[
\Tor^{\OT(\A^W)}_p(\K,\K)_p\cong \Tor^{\OT(\A)}_p(\K,\K)_p,
\]
for all $p\geq 0$: equivalently, the quadratic dual algebras are isomorophic, $\OT(\A)^!\cong \OT(\A^W)^!$.

The family of hypersolvable arrangements, introduced by Jambu and Papadima in~\cite{JP98}, interpolates between supersolvable and generic arrangements.  Like the former, they are defined recursively;
however, we use an equivalent characterization from \cite{JP02}, summarized
in \cite[Thm.~4.2]{DS06}.  An arrangement $\A$ is {\em hypersolvable}
if $\A=\B^W$, where $\B$ is supersolvable, and $\B^W$ is a $2$-generic restriction.  If $\A$ is hypersolvable, then, the linear strand of the resolution of $\K$ over $\OT(\A)$ or, equivalently, $\Tor_p^{\OT(\A)}(\K,\K)_p$ for $p\geq0$, agrees with that of the supersolvable arrangement $\B$.  From Proposition~\ref{prop:1generic},
$\OT(\A)\cong\OT(\B)/I_\B(\A)$, where $I_\B(\A)$ is generated in degrees $3$ and higher.

Recall that an arrangement $\A$ is called {\em generic} if it is an $\ell-1$-generic restriction of a Boolean arrangement, where $\ell$ is the rank of $\A$.  Such an arrangement is hypersolvable, provided that $\ell\geq3$: in this case, the supersolvable counterpart is the Boolean arrangement,
$\OT(\B)=\K[y_1,\ldots,y_n]$, and $I_\B(\A)=I(\A)$ is generated in degree $\ell$.  Schenck showed
in \cite[Thm.~3.7]{Sch11} that the regularity of $\OT(\A)$ for any arrangement $\A$ is bounded above by $\ell-1$: although he states the result for $\K=\C$, the same argument clearly works in general.  In
this case, it follows that $I(\A)$ has a linear resolution, so
the natural map
\[
\C[y_1,\ldots,y_n]\twoheadrightarrow \OT(\A)
\]
is Golod~\cite{BF85}.  Using this, we are able to compute the Betti-Poincar\'e series of
$\Tor^{\OT(\A)}(\K,\K)$ for all generic arrangements, and we do so
in the next section.

For more general hypersolvable arrangements, we lack information about
the regularity of the ideal $I_\B(\A)$, which prevents us from carrying out a similar analysis to the one in \cite{DS06} for $\Tor$ algebras of Orlik-Solomon algebras of hypersolvable arrangements.  By analogy, it seems reasonable to expect that $\OT(\A)$ is $(\ell-1)$-regular over $\OT(\B)$.

\subsection{Generic arrangements}
\label{ss:generic}
By Proposition~\ref{prop:fitting}, $I(\A)$ contains the $\ell$th fitting ideal of a $(n-1)\times\ell$ matrix $A_0$ over $\K[y_1,\ldots,y_n]$,
provided that $n>\ell$.  For $n>\ell\geq3$, let $\A_{n,\ell}$ denote a
generic arrangement of $n$ hyperplanes of rank $\ell$.  The only circuits of $\A_{n,\ell}$ have $\ell+1$ elements, so in fact we have
an equality, $I(\A_{n,\ell})=\Fitt_{\ell}(A_0)$.
Since $I(\A_{n,\ell})$ has codimension $n-\ell$, it has an Eagon-Northcott resolution: for details, we refer to \cite[\S A2.6]{eisen}.  This was observed first in the case $\ell=3$ in \cite{st}.

Let $S=\OT(\B)=\K[y_1,\ldots,y_n]$, and let
\[
Q_{n,\ell}(t)=\sum_{p\geq0}\dim_\K\Tor_p^{S}(I(\A_{n,\ell}),\K)_{p+\ell}t^p.
\]
An Eagon-Northcott resolution gives rise to Betti numbers
\begin{equation}\label{eq:Pnl}
Q_{n,\ell}(t)=\sum_{p=0}^{n-\ell-1}{n-1\choose \ell+p}{\ell-1+p\choose\ell-1}t^p.
\end{equation}

Let
\[
P_{n,\ell}(s,t)=\sum_{p,q\geq0}\dim_\K
(\Tor^{\OT(\A_{n,\ell})}_p(\K,\K)_q) s^pt^q
\]
denote the Poincar\'e-Betti series of $\OT(\A_{n,\ell})$.
Then the Golod property implies the following.
\begin{cor}
If $\A_{n,\ell}$ is a generic arrangement and $n>\ell\geq3$, the Betti-Poincar\'e series of $\Tor^{\OT(\A)}(\K,\K)$, is given by
\[
P_{n,\ell}(s,t)=\frac{(1+st)^n}{1-s^2t^\ell Q_{n,\ell}(st)}.
\]
\end{cor}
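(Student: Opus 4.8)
The plan is to apply the bigraded form of Golod's formula for the Poincar\'e series of the residue field, using that the surjection $S := \OT(\B) = \K[y_1,\ldots,y_n]\twoheadrightarrow \OT(\A_{n,\ell})$ is a Golod homomorphism. This was recorded in \S\ref{ss:restriction}--\S\ref{ss:generic}: since $I(\A_{n,\ell})=\Fitt_\ell(A_0)$ has codimension $n-\ell$, it admits an Eagon--Northcott resolution, which by Schenck's regularity bound is linear, so the natural map from the polynomial ring is Golod by \cite{BF85}. For such a map, with $S$ a polynomial ring in $n$ variables over $\K$ and $R=\OT(\A_{n,\ell})$, one has
\[
P_{n,\ell}(s,t) = \frac{P^S_\K(s,t)}{1 - s\bigl(P^S_R(s,t) - 1\bigr)},
\]
where $P^S_\K(s,t) = \sum_{i,j}\dim_\K\Tor^S_i(\K,\K)_j\,s^it^j$ and $P^S_R(s,t) = \sum_{i,j}\dim_\K\Tor^S_i(R,\K)_j\,s^it^j$.

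Next I would compute the two ingredients. The Koszul complex resolves $\K$ over $S$, so $\Tor^S_i(\K,\K)_j=0$ unless $i=j$, with $\dim_\K\Tor^S_i(\K,\K)_i=\binom{n}{i}$; hence $P^S_\K(s,t)=(1+st)^n$. For $P^S_R(s,t)$, the short exact sequence $0\to I(\A_{n,\ell})\to S\to R\to0$ gives $\Tor^S_i(R,\K)\cong\Tor^S_{i-1}(I(\A_{n,\ell}),\K)$ for $i\geq1$, so that $P^S_R(s,t)-1 = s\sum_{p\geq0,\,j}\dim_\K\Tor^S_p(I(\A_{n,\ell}),\K)_j\,s^pt^j$. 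Because $I(\A_{n,\ell})$ has a linear resolution, $\Tor^S_p(I(\A_{n,\ell}),\K)_j$ vanishes unless $j=p+\ell$, and then its dimension is the coefficient of $t^p$ in $Q_{n,\ell}(t)$, by the definition of $Q_{n,\ell}$ and formula \eqref{eq:Pnl}. Therefore
\[
s\bigl(P^S_R(s,t)-1\bigr) = s^2\sum_{p\geq0}\dim_\K\Tor^S_p(I(\A_{n,\ell}),\K)_{p+\ell}\,s^pt^{p+\ell} = s^2 t^\ell Q_{n,\ell}(st),
\]
and substituting into the Golod formula yields the claimed expression.

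The argument is essentially formal once the two inputs --- Golodness of $S\to\OT(\A_{n,\ell})$ and the Eagon--Northcott (linear) resolution of $I(\A_{n,\ell})$ --- are in hand, both of which were established above. The only step requiring care is tracking the homological and internal gradings simultaneously through the bigraded Golod formula and the degree shift in the sequence $0\to I(\A_{n,\ell})\to S\to R\to0$; I do not anticipate a genuine obstacle beyond this bookkeeping.
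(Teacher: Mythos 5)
Your proof is correct and matches the paper's intended argument: the paper derives the corollary precisely by invoking the Golod property of $\K[y_1,\ldots,y_n]\twoheadrightarrow\OT(\A_{n,\ell})$ (established via the linear Eagon--Northcott resolution and Schenck's regularity bound) and substituting $(1+st)^n$ and the shifted linear Betti numbers of $I(\A_{n,\ell})$ into the bigraded Golod formula. Your write-up simply makes explicit the bookkeeping the paper leaves implicit.
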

While it is not clear that this formula admits any interesting simplification, we remark that the Betti numbers of the ideals $I(\A_{n,\ell})$ assemble together into a simple generating function.
\begin{prop}
For all $n\geq\ell\geq3$, $Q_{n,\ell}(t)$ is the coefficient of $x^\ell y^n$ in the formal power series
\[
\frac{y}{1-y}\cdot\frac{1-(1+t)y}{1-(1+t+x)y}.
\]
\end{prop}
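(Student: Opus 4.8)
The plan is to extract the coefficient of $x^\ell y^n$ in the given rational function directly, expanding it one factor at a time, and then to match the result against the explicit binomial sum defining $Q_{n,\ell}(t)$ by means of two standard binomial identities. First I would write $\frac{1}{1-(1+t+x)y}=\sum_{m\geq0}(1+t+x)^m y^m$ and read off the coefficient of $x^\ell$, which by the binomial theorem is $\sum_{m\geq\ell}\binom{m}{\ell}(1+t)^{m-\ell}y^m$. Since the remaining factors $1-(1+t)y$ and $\frac{y}{1-y}$ involve no $x$, they merely multiply this series. Multiplying by $1-(1+t)y$, reindexing the shifted sum, and applying Pascal's rule $\binom{N}{\ell}-\binom{N-1}{\ell}=\binom{N-1}{\ell-1}$ collapses the coefficient of $y^N$ to $\binom{N-1}{\ell-1}(1+t)^{N-\ell}$, valid for all $N\geq\ell$ (the boundary value $N=\ell$ works without a separate case). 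Multiplying by $\frac{y}{1-y}=\sum_{k\geq1}y^k$ and taking the coefficient of $y^n$ then yields
\[
[x^\ell y^n]\ \frac{y}{1-y}\cdot\frac{1-(1+t)y}{1-(1+t+x)y}=\sum_{j=0}^{n-\ell-1}\binom{\ell-1+j}{\ell-1}(1+t)^j,
\]
where the upper limit $n-\ell-1$ comes precisely from the constraint $k\geq1$ in $\frac{y}{1-y}$.

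Next I would expand $(1+t)^j$ by the binomial theorem and interchange the order of summation, which reduces the claim to the purely combinatorial identity
\[
\sum_{j=p}^{n-\ell-1}\binom{\ell-1+j}{\ell-1}\binom{j}{p}=\binom{n-1}{\ell+p}\binom{\ell-1+p}{\ell-1}
\]
for $0\leq p\leq n-\ell-1$. This follows from the trinomial-revision identity $\binom{\ell-1+j}{\ell-1}\binom{j}{p}=\binom{\ell-1+p}{\ell-1}\binom{\ell-1+j}{\ell-1+p}$, which pulls the $p$-dependent factor outside the sum, followed by the hockey-stick identity $\sum_{m=r}^{M}\binom{m}{r}=\binom{M+1}{r+1}$ with $r=\ell-1+p$ and $M=n-2$. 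Comparing coefficients of $t^p$ against the definition of $Q_{n,\ell}(t)$ finishes the argument.

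I do not expect any real obstacle here: the computation is entirely formal, taking place in the completed polynomial ring $\K[t][[x,y]]$, and the only mild care needed is in tracking the summation ranges—especially the $k\geq1$ condition, which is what forces agreement between the upper limit of the extracted sum and the degree of $Q_{n,\ell}(t)$. The edge case $n=\ell$ is immediate, since both sides are then the empty sum $0$.
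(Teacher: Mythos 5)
Your proof is correct, but it runs in the opposite direction from the paper's. The paper starts from the explicit sum \eqref{eq:Pnl}, forms the triple generating function $\sum_{n,\ell,p}\binom{n-1}{\ell+p}\binom{\ell-1+p}{\ell-1}x^\ell y^n t^p$, sums first over $n$ via the binomial series $\sum_{n}\binom{n-1}{\ell+p}y^n=y^{\ell+p+1}/(1-y)^{\ell+p+1}$, and then collapses the remaining sums over $\ell$ and $p$ as nested geometric series to arrive at the rational function; no Pascal, trinomial-revision, or hockey-stick identities are needed there. You instead expand the rational function, extract the coefficient of $x^\ell y^n$ (your intermediate steps --- the coefficient $\binom{N-1}{\ell-1}(1+t)^{N-\ell}$ of $y^N$ after multiplying by $1-(1+t)y$, and the resulting sum $\sum_{j=0}^{n-\ell-1}\binom{\ell-1+j}{\ell-1}(1+t)^j$ --- all check out), and reduce the claim to the identity $\sum_{j=p}^{n-\ell-1}\binom{\ell-1+j}{\ell-1}\binom{j}{p}=\binom{n-1}{\ell+p}\binom{\ell-1+p}{\ell-1}$, which trinomial revision plus the hockey stick indeed yields. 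Both arguments are complete and entirely formal; the paper's is shorter because summing the closed-form Betti numbers telescopes immediately, while yours has the mild advantage of making the summation ranges (the bound $p\leq n-\ell-1$ coming from the factor $y/(1-y)$, and the vacuous case $n=\ell$) fully explicit.
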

\begin{proof}
Use binomial expansions to simplify:
\begin{align*}
\sum_{n,\ell,p\geq0}{n-1\choose\ell+p}{\ell-1+p\choose\ell-1}x^\ell y^n t^p  = &
\sum_{\ell,p\geq0}
\frac{y^{\ell+1+p}}{(1-y)^{\ell+1+p}}{\ell-1+p\choose\ell-1} t^px^\ell\\
=&\frac{y}{1-y}\sum_{\ell\geq0}\Big(\frac{xy}{1-y}\Big)^\ell
\frac{1}{(1-ty/(1-y))^{\ell}}\\
=&\frac{y}{1-y}\cdot\frac{1-(1+t)y}{1-(1+t+x)y}.\qedhere
\end{align*}
\end{proof}
\section{Complete intersections}
\label{sec:ci}

It is known in general that a quadratic algebra that is a complete intersection is Koszul (see \cite{QAbook}, Section 2.6, Example 2).
Since the Orlik-Terao ideal $I(\A)$ has codimension $n-\ell$, the
algebra $\OT(\A)$ is Koszul if $I(\A)$ is generated by $n-\ell$
quadrics.  This is clearly a rather special situation, and we
characterize the arrangements for which this happens.  
We show (Theorem~\ref{thm:qCI2})
that $I(\A)$ is a quadratic complete intersection (``q.c.i.'') if and only if $\A$ is supersolvable with exponents $1$ and $2$.  (So, if $\OT(\A)$ is a Koszul complete intersection, then $\A$ is supersolvable: see Question~\ref{ques:koszul}.)

\subsection{A numerical constraint}
Recall that for $X\in L_2(\A)$, the number of hyperplanes containing $X$ is $1-\mu_\A(X)$.  If $\mu_\A(X)=-2$, we will call $X$ a {\em triple point}.  

\begin{thm}\label{thm:qCI}
Let $\A$ be an arrangement of $n$ hyperplanes, of rank $\ell$ such that $\OT(\A)$ is quadratic. Then the following are equivalent:
\begin{enumerate}
\item $\OT(\A)$ is a complete intersection.
\item For all $X\in L_2(\A)$, we have $\abs{\mu_\A(X)}\leq2$, and the number of triple points equals $n-\ell$.
%For every $X\in L_2(\A)$ we have $\mu_{\A}(X)\leq 2$ and $|\{X\in %L_2(\A): \mu_{\A}(X)=2\}|=n-\ell$.
\item $\pi(\A,t)=(1+t)^{2\ell-n}(1+2t)^{n-\ell}$. In particular $\ell\leq n\leq2\ell-1$.
\end{enumerate}
\end{thm}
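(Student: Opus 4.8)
The plan is to prove the chain of equivalences (1) $\Leftrightarrow$ (2) $\Leftrightarrow$ (3) for an arrangement $\A$ whose Orlik-Terao algebra is already quadratic. The unifying tool is the Hilbert series identity \eqref{eq:Terao_theorem}: $h(\OT(\A),t)=\pi(\A,t/(1-t))$. Since $I(\A)$ is quadratic by hypothesis and has codimension $n-\ell$, the algebra $\OT(\A)$ is a complete intersection precisely when $I(\A)$ is generated by a regular sequence of $n-\ell$ quadrics, which is in turn equivalent to the numerical statement that $h(\OT(\A),t)=\frac{(1-t^2)^{n-\ell}}{(1-t)^n}=\frac{(1+t)^{n-\ell}}{(1-t)^{2\ell-n}}$; indeed, a quadratic algebra of this codimension is a complete intersection if and only if its Hilbert series agrees with that of a c.i., by a standard comparison of lengths (if the Hilbert series match, the $n-\ell$ quadric generators cannot satisfy any extra syzygy, so they form a regular sequence). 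So (1) is equivalent to this explicit Hilbert series formula.

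Next I would translate that Hilbert series formula into (3). Substituting $t\mapsto t/(1-t)$ into $\pi(\A,t)$ and matching with $\frac{(1+t)^{n-\ell}}{(1-t)^{2\ell-n}}$: one checks that if $\pi(\A,t)=(1+t)^{2\ell-n}(1+2t)^{n-\ell}$ then $\pi(\A,t/(1-t))=(1+\frac{t}{1-t})^{2\ell-n}(1+\frac{2t}{1-t})^{n-\ell}=\frac{(1+t)^{n-\ell}}{(1-t)^\ell}$ --- wait, one must be careful: $\pi(\A,t/(1-t))=\frac{1}{(1-t)^\ell}\cdot(1-t)^\ell\pi(\A,t/(1-t))$, and since $\pi$ has degree $\ell$, multiplying through by $(1-t)^\ell$ gives $(1-t)^{2\ell-n}\cdot(1-t+2t)^{n-\ell}=(1-t)^{2\ell-n}(1+t)^{n-\ell}$, so $h(\OT(\A),t)=\frac{(1+t)^{n-\ell}}{(1-t)^{2\ell-n}}$ as required. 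Conversely, since $\pi(\A,t)$ is determined by its value at $t/(1-t)$ (the substitution is invertible), the Hilbert series formula forces $\pi(\A,t)$ to be exactly $(1+t)^{2\ell-n}(1+2t)^{n-\ell}$. The constraint $\ell\leq n\leq 2\ell-1$ is then automatic: $n\geq\ell$ always, and $2\ell-n\geq 0$ is needed for $\pi$ to have nonnegative exponents (equivalently for $\OT(\A)$ to have the claimed finite projective dimension with positive Betti numbers), while $2\ell-n\geq 1$ follows since $\pi(\A,t)$ must have $1+t$ as a factor and have a genuine constant term structure --- I would confirm $n=2\ell$ is excluded because then $\pi(\A,t)=(1+2t)^\ell$ would fail to be divisible by $1+t$, which every essential arrangement's Poincaré polynomial is.

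The equivalence (2) $\Leftrightarrow$ (3) is the combinatorial heart. Here I would use the standard formula relating low-degree coefficients of $\pi(\A,t)$ to the lattice: $\pi(\A,t)=1+nt+\left(\binom{n}{2}-\sum_{X\in L_2(\A)}\binom{1-\mu_\A(X)}{2}+\sum_{X\in L_2}(1-\mu_\A(X))-\binom{n}{2}\right)$ --- more precisely, the degree-$2$ coefficient of $\pi(\A,t)$ is $\sum_{X\in L_2(\A)}\mu_\A(X)\cdot(-1)=\sum_{X\in L_2}|\mu_\A(X)|$, and I'd relate $\sum_{X\in L_2}\binom{1-\mu_\A(X)}{2}=\binom n2$. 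Meanwhile, the quadratic-dual / $2$-formality count from Corollary~\ref{cor:implications} (via \cite[Theorem~2.4]{st}) tells us the number of independent quadratic relations in $I(\A)$ equals $\sum_{X\in L_2(\A)}\binom{(1-\mu_\A(X))-1}{2}=\sum_{X\in L_2}\binom{-\mu_\A(X)}{2}$, which must equal $n-\ell$ for a codimension-$(n-\ell)$ quadratic ideal to be a complete intersection (no fewer, no more generators). Expanding $(1+t)^{2\ell-n}(1+2t)^{n-\ell}$ and reading off coefficients of $t$ and $t^2$, and comparing with the lattice formulas, forces each $X\in L_2$ to contribute $\binom{-\mu_\A(X)}{2}\in\{0,1\}$ --- i.e.\ $|\mu_\A(X)|\leq 2$ --- with exactly $n-\ell$ triple points. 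The main obstacle will be getting the combinatorial bookkeeping exactly right: carefully verifying that the rank-$2$ flat data $(\sum|\mu_\A(X)|,\ \#\{\text{triple points}\})$ is equivalent to the pair of coefficient equations coming from $(1+t)^{2\ell-n}(1+2t)^{n-\ell}$, and confirming that $|\mu_\A(X)|\leq 2$ for all $X$ is genuinely forced (not just on average) --- this uses that $\binom{-\mu}{2}$ grows superlinearly, so any flat with $|\mu_\A(X)|\geq 3$ would overshoot the quadric count $n-\ell$ unless compensated, but then $2$-formality (which quadraticity implies) pins the count exactly and rules out such compensation. I would also need to double check the boundary behaviour when $n-\ell=0$ (Boolean-type case, $\OT(\A)$ polynomial) and when $2\ell-n=0$ is to be excluded, handling these as degenerate cases separately.
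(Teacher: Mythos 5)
Your handling of $(1)\Leftrightarrow(3)$ is essentially the paper's: the Koszul complex gives $h(\OT(\A),t)=(1+t)^{n-\ell}/(1-t)^{\ell}$ for a q.c.i., Terao's formula \eqref{eq:Terao_theorem} converts this to the stated form of $\pi(\A,t)$, the substitution is invertible, and divisibility by $1+t$ forces $2\ell-n\geq1$; conversely the Hilbert series forces $\dim_\K I(\A)_2=n-\ell$, and quadraticity plus $\codim I(\A)=n-\ell$ then gives a complete intersection. That part is fine.

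The genuine gap is in obtaining condition $(2)$, specifically the pointwise bound $\abs{\mu_\A(X)}\leq 2$ for \emph{every} $X\in L_2(\A)$. Your numerical bookkeeping only yields the aggregate identity $\sum_{X\in L_2(\A)}\binom{-\mu_\A(X)}{2}=n-\ell$ (equivalently $\dim_\K I(\A)_2=n-\ell$), and this is perfectly consistent with, say, one flat of multiplicity $4$ contributing $3$ to the sum and three fewer triple points; the appeal to ``superlinear growth'' plus $2$-formality does not rule this out, since $2$-formality only fixes $\codim I_2(\A)=n-\ell$, which you already have. The missing idea (which is the heart of the paper's $(1)\Rightarrow(2)$) is a syzygy argument: if some $X\in L_2(\A)$ contains $p\geq4$ hyperplanes, then the quadrics $r_{123},r_{124},r_{134}$ must all occur among the minimal generators of $I(\A)$ (the monomials $y_1y_i$, $i\in[X]$, appear only in relations coming from circuits inside $[X]$), and they admit a \emph{linear} syzygy; but minimal generators of a quadratic complete intersection have only Koszul (degree-two) syzygies, a contradiction. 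Relatedly, your plan to prove $(2)\Leftrightarrow(3)$ by matching coefficients of $t$ and $t^2$ cannot work as stated: condition $(3)$ prescribes all $\ell+1$ coefficients of $\pi(\A,t)$, whereas condition $(2)$ only sees $L_2(\A)$, so neither implication is a degree-$\leq2$ coefficient comparison. The correct architecture is to route both through $(1)$: $(2)\Rightarrow(1)$ by counting the $n-\ell$ independent triple-point quadrics and using quadraticity and $\codim I(\A)=n-\ell$ (the paper cites \cite[Prop.~2.1]{st}), and $(1)\Rightarrow(2)$ by the syzygy argument above together with that count.
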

\begin{proof} $(2)\Rightarrow (1)$. The proof is immediate from \cite[Proposition 2.1]{st}.
$(1)\Rightarrow (2)$. We argue by contradiction: suppose $X\in L_2(\A)$ has  $[X]=\set{1,\ldots,p}$, for some $p>3$.  Let $I(\A_X)\subset\K[y_1,\ldots,y_{p}]$ be the Orlik-Terao ideal of $\A_X$.  Since $p \geq 4$, we have four circuits on $\set{1,2,3,4}$
and four elements of $I(\A_X)$, which we abbreviate by $r_{123}$,
$r_{124}$, $r_{134}$, and $r_{234}$.
It is not difficult to see that any one of these is a linear combination of the other three, and there exists a linear syzygy on any three, so $I(\A_X)$ is (minimally) generated by $r_{1ij}$ for
$1<i<j\leq p+1$. 

By hypothesis, $I(\A)$ is generated by relations $r_C$, where $\abs{C}=3$.  Since such circuits intersect on at most
one element, monomials $y_1y_i$ for $1<i\leq p$ appear only in relations indexed by circuits of $\A_X$.  It follows that $r_{1ij}$
must be part of any minimal generating set of $I(\A)$ as well.
Since the quadrics $r_{123}$, $r_{124}$, $r_{134}$ have a linear syzygy, they cannot be part of a regular sequence, so if $I(\A)$ is a 
complete intersection, necessarily $p\leq 3$.

To complete the argument, note that, since rank-$2$ flats contain at most $3$ points, the relations
$\set{r_C\colon C\in\CC(\M(\A)), \abs{C}=3}$ are linearly
independent.  Then, since $I(\A)$ is a quadratic complete intersection, the number of triple points equals $\codim(I(\A))$, which is $n-\ell$.

$(1)\Rightarrow (3)$: Suppose that $\OT(\A)$ is a q.c.i.  Then  $\codim(I(\A))=n-\ell$, so $\OT(\A)$ has the Koszul graded minimal free resolution as a $S=\K[y_1,\ldots,y_n]$-module
\[0\rightarrow S(-2(n-\ell))\rightarrow\cdots\rightarrow S^{n-\ell}(-2)\rightarrow S\rightarrow \OT(\A)\rightarrow 0.\]
So the Hilbert series is
\begin{equation}\label{eq:hsofci}
h(\OT(\A),t)=\frac{(1-t^2)^{n-\ell}}{(1-t)^n}=\frac{(1+t)^{n-\ell}}{(1-t)^{\ell}}.
\end{equation}
By Terao's formula \eqref{eq:Terao_theorem}, this is equivalent to
\[
\pi(\A,t)=(1+t)^{2\ell-n}(1+2t)^{n-\ell}.
\]
Since $\A$ is a central arrangement, $1+t$ always divides this polynomial, and therefore $2\ell-n\geq 1$ and $n-\ell\geq 0$.

%\vskip .1in

$(3)\Rightarrow (1)$: Suppose we have a rank-$\ell$, central essential arrangement with Poincar\'e polynomial as in $(3)$.  Then $h(\OT(\A),t)$ is given by \eqref{eq:hsofci}, so
\[
h(\OT(\A),t)=1+nt+(n(n+1)/2-(n-\ell))t^2+\cdots,
\]
and $I(\A)$ contains $n-\ell$ (independent) quadrics.  By assumption, these generate $I(\A)$, so it is a complete intersection, as in
\cite[Cor.~1.8]{st}.
\end{proof}
\subsection{$3$-tree arrangements}
Here, we characterize the supersolvable arrangements having exponents
which are at most $2$.
We will follow the terminology for hypergraphs of, e.g., \cite{JMM06}.  We briefly recall some definitions.
\begin{defn}\label{def:hypergraph}
A hypergraph $G=(V,E)$ is a pair of a set of vertices $V$ and edges
$E\subseteq 2^V$.  We assume that each edge $e\in E$ contains at least
two vertices.  A {\em walk} in $G$ is an alternating sequence of vertices and edges with the property that consecutive vertices are contained in the intermediate edge.  $G$ is {\em connected} if every pair of vertices is joined by a walk.  A {\em cycle} is a walk with
at least two edges that begins and ends at the same vertex and consists of, otherwise,
distinct edges and vertices.  A hypergraph $G$ with no cycles is called a {\em hyperforest}.  A connected hyperforest is a {\em hypertree}.  The {\em edge graph} of a hypergraph is the graph with vertices $E$ and edges $\set{e,e'}$ whenever $e\cap e'\neq\emptyset$.

We shall be exclusively interested in hypergraphs
for which $\abs{e}=3$ for each $e\in E$, which we will call $3$-graphs, $3$-trees, and $3$-forests, respectively.
\end{defn}
\begin{defn}\label{def:GofA}
Let $\A$ be an arrangement of $n$ hyperplanes.
Let $V=[n]$, and $E(\A)=E=\set{[X]\colon X\in L_2(\A)}$.
Let $G(\A):=(V,E)$, the hypergraph on codimension-$2$ flats of $\A$.
\end{defn}

Note that, if $G(\A)$ is a $3$-graph, then for any $e,e'\in E(\A)$, 
if $e\neq e'$, we must have $\abs{e\cap e'}\leq 1$.
If $G(\A)$ is connected and $\abs{E}=1$, clearly $\M(\A)=U_{2,3}$, the
uniform matroid of $3$ points in rank $2$.
More generally, a condition introduced by Falk \cite{Fa01} ensures 
$G(\A)$ determines the matroid of $\A$:
\begin{defn}\label{def:lineclosed}
If $\M$ is a matroid on $[n]$, a set $S\subseteq [n]$ is said to be
{\em line-closed} if $\cl{\set{i,j}}\subseteq S$ for every $i,j\in S$.
The matroid $\M$ is {\em line-closed} if all line-closed sets are
flats of $\M$.
\end{defn}
\begin{prop}
If $\M(\A)$ is line-closed and $G(\A)$ is a $3$-graph, then
a set $S\subseteq[n]$ is a flat of $\M(\A)$ if and only if $\abs{S\cap e}\neq2$ for any $e\in E(\A)$.
\end{prop}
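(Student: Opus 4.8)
The plan is to prove both implications of the biconditional by relating flats of $\M(\A)$ to the combinatorial condition $\abs{S\cap e}\neq 2$ for all $e\in E(\A)$, using line-closedness as the bridge. First I would record the easy direction: if $S$ is a flat, then for any $e=[X]$ with $X\in L_2(\A)$ we cannot have $\abs{S\cap e}=2$, since two points of $e$ span $X$ (because $e$, having exactly three elements, is a circuit, so any two of its elements have closure $X$), hence $\cl{S}\supseteq X$, forcing $e\subseteq S$ and $\abs{S\cap e}=3$. This argument uses only that flats are closed under taking closures of subsets and that $G(\A)$ is a $3$-graph, so it does not need line-closedness.

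For the converse, suppose $S\subseteq[n]$ satisfies $\abs{S\cap e}\neq 2$ for every $e\in E(\A)$; I claim $S$ is line-closed, and then line-closedness of $\M(\A)$ finishes the argument. To see $S$ is line-closed, take $i,j\in S$ with $i\neq j$ and set $X=\cl{\set{i,j}}\in L_2(\A)$; I must show $[X]\subseteq S$. If $\abs{[X]}=2$ then $[X]=\set{i,j}\subseteq S$ trivially. Otherwise $[X]$ is an edge $e\in E(\A)$ with $\set{i,j}\subseteq S\cap e$, so $\abs{S\cap e}\geq 2$; by hypothesis $\abs{S\cap e}\neq 2$, hence $\abs{S\cap e}=3=\abs{e}$, i.e.\ $e=[X]\subseteq S$. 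Thus every line-closure of a pair in $S$ lies in $S$, so $S$ is line-closed, and since $\M(\A)$ is line-closed, $S$ is a flat.

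I expect the only subtlety—really the only thing worth spelling out—is the step identifying $\cl{\set{i,j}}$ with an edge of $G(\A)$ when $\abs{\cl{\set{i,j}}}>2$: one must note that $\cl{\set{i,j}}$ is always a flat of rank at most $2$, hence either $\set{i,j}$ itself (if $i,j$ already form a flat, i.e.\ rank-$1$ closure is impossible here since $i\neq j$ and $\A$ is loopless, so this means rank exactly $2$ with only those two points) or a genuine rank-$2$ flat $X\in L_2(\A)$, whose ground set $[X]$ is by Definition~\ref{def:GofA} precisely an element of $E(\A)$, and which has exactly three elements because $G(\A)$ is a $3$-graph. No induction or matroid exchange is needed; the content is entirely in unwinding the definitions of line-closed, $3$-graph, and $G(\A)$. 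The result then follows by combining the two displayed equivalences: $S$ a flat $\iff$ $S$ line-closed $\iff$ $\abs{S\cap e}\neq 2$ for all $e\in E(\A)$.
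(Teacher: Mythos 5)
Your proof is correct and follows the same route as the paper, whose entire argument is the one-line observation that $\abs{S\cap e}\neq 2$ for all $e$ forces $S$ to be line-closed (hence a flat, by line-closedness of $\M(\A)$); you simply spell out the unwinding of definitions and also supply the easy "only if" direction, which the paper leaves implicit. No gaps.
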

\begin{proof}
Suppose $\abs{S\cap e}\neq2$ for any $e\in E(\A)$.  Since $G(\A)$ is a $3$-graph, this means $S$ is line-closed, hence a flat.
\end{proof}
\begin{thm}\label{thm:combinatorics}
For an irreducible arrangement $\A$, the following are equivalent:
\begin{enumerate}
\item $\M(\A)$ is line-closed and $G(\A)$ is a $3$-tree with $\ell-1$ edges.
\item $\A$ is supersolvable, with exponents $\set{1\cdot1, (\ell-1)\cdot 2}$.
\item $\M(\A)$ is an iterated parallel connection of $\ell-1$ uniform matroids $U_{2,3}$.
\end{enumerate}
\end{thm}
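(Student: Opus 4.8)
The plan is to prove the two equivalences $(2)\Leftrightarrow(3)$ and $(1)\Leftrightarrow(3)$, using the parallel-connection description in $(3)$ as the bridge between the lattice-theoretic condition $(2)$ and the hypergraph condition $(1)$. All four implications will be inductions on the rank $\ell$; in each of the three conditions one has $n=2\ell-1$ (the exponents of a supersolvable arrangement sum to $n$, and a $3$-tree with $\ell-1$ edges has $3+2(\ell-2)=2\ell-1$ vertices), and the base case is $\M(\A)\cong U_{2,3}$ (the rank $\leq 1$ case being trivial).

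For $(2)\Rightarrow(3)$, fix a maximal modular chain in $L(\A)$ with coatom $X$, so that $\A_X$ is supersolvable with exponents $\set{1\cdot1,(\ell-2)\cdot2}$ and $|\A|-|\A_X|$ is the top exponent $d_\ell$. The first step of a modular chain always contributes an exponent $1$, so $\A_X$ cannot have all exponents equal to $2$; hence $d_\ell=2$ and $\A=\A_X\cup\set{H_a,H_b}$. Put $Y=\cl{\set{a,b}}$, a rank-$2$ flat with $Y\not\leq X$. Since $X$ is a coatom, $X+Y$ is the maximal flat, of rank $\ell$, so the modular rank identity $\rho(X)+\rho(Y)=\rho(X+Y)+\rho(X\wedge Y)$ forces $\rho(X\wedge Y)=1$; thus $X\wedge Y$ is a hyperplane $H_c$ with $c\in[X]$, the set $\set{a,b,c}$ is a circuit, and $H_a,H_b$ meet $\A_X$ only along $H_c$. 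Hence $\M(\A)$ is the parallel connection of $\M(\A_X)$ and a copy of $U_{2,3}$ at $c$, and induction gives $(3)$. (The same conclusion follows from Brylawski's short-circuit axiom, Lemma~\ref{lem:shortcircuit}.) For the converse $(3)\Rightarrow(2)$: if $\M(\A)$ is an iterated parallel connection of $\ell-1$ copies of $U_{2,3}$, the flat spanned by the first $\ell-2$ summands is a coatom, which one checks is modular by verifying the short-circuit axiom (Lemma~\ref{lem:shortcircuit}) --- the only circuits not contained in it pass through the last summand's circuit $\set{p,a,b}$ with $p$ in the coatom; cf.\ Brylawski~\cite{Bry75}. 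By induction the restriction to this coatom is supersolvable with exponents $\set{1\cdot1,(\ell-2)\cdot2}$, so $\A$ is supersolvable with exponents $\set{1\cdot1,(\ell-1)\cdot2}$.

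For $(3)\Rightarrow(1)$, write $\M(\A)$ as an iterated parallel connection of copies $\M_i\cong U_{2,3}$ on triples $e_1,\dots,e_{\ell-1}$, each $e_i$ meeting $e_1\cup\cdots\cup e_{i-1}$ in a single element. From the description of flats of a parallel connection, one checks inductively that the rank-$2$ flats of $\M(\A)$ of size $\geq3$ are exactly $e_1,\dots,e_{\ell-1}$ --- attaching a $3$-point line at a point creates no longer line and no new line --- and that every other rank-$2$ flat is a double point. So the triple points form the hypergraph $\set{e_1,\dots,e_{\ell-1}}$ on $[n]$; it is connected, and it is acyclic because a cycle $e_{i_1},\dots,e_{i_k}$ with $i_1<\cdots<i_k$ would force $e_{i_k}$ to meet $e_{i_1}\cup\cdots\cup e_{i_{k-1}}$ in two distinct vertices, against the single-point attachment. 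Hence $G(\A)$ is a $3$-tree with $\ell-1$ edges; and $\M(\A)$ is line-closed since $U_{2,3}$ is and line-closedness is inherited by these parallel connections (Falk~\cite{Fa01}). For $(1)\Rightarrow(3)$, invoke the preceding proposition: when $\M(\A)$ is line-closed and $G(\A)$ is a $3$-graph, $S\subseteq[n]$ is a flat precisely when $|S\cap e|\neq2$ for all $e\in E(\A)$, so $\M(\A)$ is recovered from $G(\A)$. Choose a leaf edge $e=\set{p,a,b}$ of the $3$-tree, with $a,b$ in no other edge; then $X:=[n]-\set{a,b}$ meets every $e'\neq e$ in all of $e'$ and meets $e$ in $\set p$, hence is a flat. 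Its hypergraph $G(\M(\A)|_X)$ is $G(\A)$ with $e$ deleted --- a $3$-tree with $\ell-2$ edges --- and $\M(\A)|_X$ is again line-closed, so by induction it is an iterated parallel connection of $\ell-2$ copies of $U_{2,3}$; in particular $\rho(X)=\ell-1$, so $X$ is a coatom and $\M(\A)=P(\M(\A)|_X,U_{2,3})$ glued at $p$, which is $(3)$.

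I expect the main obstacle to be the matroid bookkeeping around the parallel connection. The three delicate points are: (i) identifying $\M(\A)$ with $P(\M(\A_X),U_{2,3})$ in $(2)\Rightarrow(3)$, which hinges on the modular rank identity forcing $X\wedge\cl{\set{a,b}}$ to be a single hyperplane of $\A_X$; (ii) the structural claim in $(3)\Rightarrow(1)$ that the rank-$2$ flats of an iterated parallel connection of $U_{2,3}$'s are exactly the summands together with a family of double points, with no cycles among the summands; and (iii) the propagation of line-closedness through restriction to a modular coatom and through parallel connection, for which I would lean on Falk~\cite{Fa01} and Brylawski~\cite{Bry75}. The subtlest single step is the induction in $(1)\Rightarrow(3)$: a restriction of a line-closed matroid need not be line-closed in general, so line-closedness of $\M(\A)|_X$ must be obtained from the $3$-graph hypothesis via the preceding proposition's description of flats, not by a naive restriction argument.
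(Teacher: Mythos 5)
Your proof is correct and follows essentially the same route as the paper: induction on rank by peeling off a leaf triple (equivalently, the top step of the modular chain), identifying the coatom $X=[n]-\set{a,b}$ and recognizing $\M(\A)$ as a parallel connection of $\M(\A_X)$ with $U_{2,3}$ at the attachment point. The only differences are organizational --- you close the cycle through $(3)$ in both directions rather than proving $(1)\Rightarrow(2)\Rightarrow(3)\Rightarrow(1)$ --- and you write out the implication $(3)\Rightarrow(1)$ that the paper omits as routine; your flagged concern about line-closedness of the restriction is handled correctly by deducing it from the flat description of the preceding proposition rather than from restriction alone.
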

\begin{proof}
We prove ``$(1)\Rightarrow(2)$'' by induction on $\abs{E}$.  
The claim being trivial for $\abs{E}=0$, suppose it
holds for $3$-trees with $\abs{E}<m$, where $m>0$.  Suppose $\A$ has
$n$ hyperplanes and $G(\A)$ is a $3$-tree with $m$ edges.  The edge graph of $G(\A)$ is a tree: let
$\set{e,e'}$ be an edge for which $e$ has degree $1$, and
let $\set{i,j}=e-e'$.  Let $S=[n]-\set{i,j}$.  Clearly $S$ is a 
maximal proper line-closed subset, so $S=[X]$ for a coatom $X\in L(\A)$.  

The induced subgraph of $G(\A)$ on $S$ may
be identified with $G(\A_X)$.  Again, $G(\A_X)$ is a $3$-tree, so
$\A_X$ is supersolvable by induction.  
Now $e=\set{i,j,k}$ for some $k$, a circuit of $\M(\A)$.
Then $X\wedge(\set{i}\vee\set{ j})=\set{k}$, so $X$ is modular.   It follows that $\A$ is 
supersolvable, and the exponent $2$ occurs with multiplicity one more
than in $\A_X$.

To show ``$(2)\Rightarrow(3)$'' use induction on $\ell$.  The base
case being obvious, suppose $\A_X$ is supersolvable with exponents
$\set{1\cdot 1, (\ell-2)\cdot2}$ and $X$ is modular.  Let $\set{i,j}=[n]-[X]$, and let $e=\cl{i,j}$.  Since $X$ is modular, 
$e=\set{i,j,k}$ for some $k\neq i,j$.  The submatroid on $e$ is
isomorphic to $U_{2,3}$, and the contraction $\M(\A)/k$ is disconnected, so $\M(\A)$ is a parallel connection of $\M(\A_X)$ with $U_{2,3}$ over $k$, by \cite[7.1.16]{Oxbook}.

We omit the implication ``$(3)\Rightarrow(1)$'', which is routine.
\end{proof}
Further discussion of the iterated parallel connection operad may be found in \cite{DS12}.  Property $(3)$ implies that $\M(\A)$ is graphic: the graph may be constructed by iterated parallel connection of triangles along the vertices of $G(\A)$.
\begin{figure}
\[
G(\A):\quad
\begin{tikzpicture}[scale=1.1,baseline=(current bounding box.center)]
%% make a new node shape for the triangular edges in a 3-graph
\makeatletter
\pgfdeclareshape{edge}{
\inheritsavedanchors[from=circle]
\inheritanchor[from=circle]{center}
\anchor{left}{
\pgf@x=-6pt%
\pgf@y=-3.46pt
}
\anchor{right}{
\pgf@x=6pt%
\pgf@y=-3.46pt%
}
\anchor{top}{
\pgf@x=0pt%
\pgf@y=6.93pt
}
\backgroundpath{
\pgfplothandlerclosedcurve
\pgfplotstreamstart
\pgfplotstreampoint{\pgfpoint{-10pt}{-5.77pt}}
\pgfplotstreampoint{\pgfpoint{10pt}{-5.77pt}}
\pgfplotstreampoint{\pgfpoint{0pt}{11.55pt}}
\pgfplotstreamend
\pgfusepath{stroke}
\pgfpathcircle{\pgfpoint{-6pt}{-3.46pt}}{1.5pt}
\pgfpathcircle{\pgfpoint{6pt}{-3.46pt}}{1.5pt}
\pgfpathcircle{\pgfpoint{0pt}{6.93pt}}{1.5pt}
\pgfusepath{fill}
}
}
\makeatother
%% now draw a representative 3-tree
%%
\pgfnode{edge}{left}{}{x}{\pgfusepath{stroke}}
{
\pgftransformrotate{-45}
\pgfnode{edge}{top}{}{y}{\pgfusepath{stroke}}
}
{
\pgftransformshift{\pgfpointanchor{x}{right}}
\pgftransformrotate{45}
\pgfnode{edge}{top}{}{z}{\pgfusepath{stroke}}
}
{
\pgftransformshift{\pgfpointanchor{y}{left}}
\pgftransformrotate{-90}
\pgfnode{edge}{top}{}{u}{\pgfusepath{stroke}}
}
{
\pgftransformshift{\pgfpointanchor{y}{right}}
\pgftransformrotate{10}
\pgfnode{edge}{top}{}{v}{\pgfusepath{stroke}}
}
{
\pgftransformshift{\pgfpointanchor{z}{right}}
\pgftransformrotate{40}
\pgfnode{edge}{top}{}{r}{\pgfusepath{stroke}}
}
{
\pgftransformshift{\pgfpointanchor{z}{right}}
\pgftransformrotate{155}
\pgfnode{edge}{top}{}{s}{\pgfusepath{stroke}}
}
\end{tikzpicture}
\qquad
\begin{tabular}{l}
$n=15$, $\ell=8$,\\
$\pi(\A,t)=(1+t)(1+2t)^7$.
\end{tabular}
\]
\caption{A $3$-tree arrangement}
\end{figure}
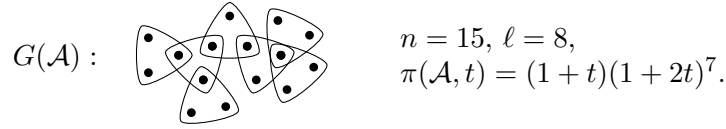
\subsection{A combinatorial characterization}
Now consider the hypergraph $G(\A)$ in the case that $\OT(\A)$ is a 
q.c.i.  By Theorem~\ref{thm:qCI}, we see $G(\A)$ is a $3$-graph with
$n-\ell$ edges.  The quadratic complete intersection property is
inherited by subarrangements:
\begin{lem}\label{lem:qci subarr}
If $\OT(\A)$ is a q.c.i., then so is $\OT(\A_X)$ for any $X\in L(\A)$.
\end{lem}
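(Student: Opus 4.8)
The plan is to read off the Orlik-Terao ideal of $\A_X$ from that of $\A$ using the Proudfoot-Speyer identification $\OT(\A_X)\cong\OT(\A)/(y_i\colon i\notin[X])$, and to observe that because $[X]$ is a flat (hence line-closed), this ideal is generated by a \emph{sub-collection} of a regular sequence of quadrics cutting out $Y(\A)$.

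First I would invoke Theorem~\ref{thm:qCI}: since $\OT(\A)$ is a quadratic complete intersection it is in particular quadratic, so $I(\A)$ is generated by the relations $r_C$ indexed by the triple points $C$ of $\A$. Two distinct triple points are distinct rank-$2$ flats, hence meet in at most one element, so the quadrics $r_C$ have pairwise disjoint monomial supports; in particular they are linearly independent, and by condition (2) of Theorem~\ref{thm:qCI} there are exactly $\codim I(\A)=n-\ell$ of them. An ideal of the polynomial ring $S=\K[y_1,\ldots,y_n]$ that is generated by $\codim$-many elements has those elements forming a regular sequence (as $S$ is Cohen-Macaulay), so $\{r_C\colon C\text{ a triple point of }\A\}$ is a regular sequence in $S$.

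Next I would order the hyperplanes so that $[X]$ is initial and study the retraction $\rho\colon S\to S_X:=\K[y_i\colon i\in[X]]$ with $\rho(y_i)=y_i$ for $i\in[X]$ and $\rho(y_i)=0$ otherwise. The crucial point is that $[X]$ is a flat of $\M(\A)$, hence line-closed: if $C$ is a triple point with $\abs{C\cap[X]}\ge2$, then the rank-$2$ flat spanned by any two elements of $C$ is $C$ itself, so $C\subseteq[X]$. Therefore $\rho(r_C)=r_C$ when $C\subseteq[X]$ and $\rho(r_C)=0$ otherwise. Using $\OT(\A_X)\cong\OT(\A)/(y_i\colon i\notin[X])=S_X/\rho(I(\A))$ from \cite{PS06}, this gives
\[
I(\A_X)=\rho(I(\A))=\big(r_C\colon C\text{ a triple point of }\A\text{ with }C\subseteq[X]\big).
\]
This generating set is a subset of the regular sequence $\{r_C\colon C\text{ a triple point of }\A\}$, hence is itself a regular sequence of quadrics in $S_X$ (a sub-collection of a homogeneous regular sequence in a Cohen-Macaulay ring generates an ideal of codimension equal to its cardinality, so is again a regular sequence). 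Thus $I(\A_X)$ is a complete intersection of quadrics, i.e. $\OT(\A_X)$ is a q.c.i.

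The step I expect to carry all the weight is the line-closure observation: it is exactly what rules out the pathology that deleting part of a triple point from $\A$ could produce a relation of degree $>2$ (as happens passing from a parallel connection of two copies of $U_{2,3}$ to $U_{3,4}$), and it is what keeps $\OT(\A_X)$ quadratic. Everything else is formal commutative algebra. I do not expect to need the structural classification of Theorem~\ref{thm:combinatorics}, nor to re-verify condition (2) of Theorem~\ref{thm:qCI} for $\A_X$ directly; one could alternatively run the argument that way by checking that every rank-$2$ flat of $\A_X$ still has at most three points and that the triple-point count drops to $n_X-\rho(X)$, but the regular-sequence argument above makes that automatic.
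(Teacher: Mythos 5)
Your proof is correct, and its endgame coincides with the paper's: the generators of $I(\A_X)$ form a sub-collection of the regular sequence of quadrics generating $I(\A)$, hence are themselves a regular sequence. Where you genuinely diverge is in how you show $I(\A_X)$ is quadratic and identify its generators. The paper argues by contradiction: if $r_C$ were an irredundant generator of $I(\A_X)$ of degree $d\geq 3$, one could write $r_C=\sum_k P_k r_{C_k}$ with $\abs{C_k}=3$ (quadraticity of $I(\A)$), choose a monomial order with $\lead(r_C)=y_1\cdots y_d$, and produce some $C_k$ meeting $C$ in two elements, hence contained in the circuit $C$ --- impossible. You instead invoke the Proudfoot--Speyer splitting $\OT(\A_X)\cong\OT_{[X]}(\A)$ to get $I(\A_X)=\rho(I(\A))$ in one stroke, and compute your retraction $\rho$ on the quadric generators using the fact that $[X]$ is a flat: a triple point meets $[X]$ in $0$, $1$, or all $3$ of its elements, so $\rho(r_C)\in\set{0,\,r_C}$. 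This is arguably cleaner --- it pins down the generators of $I(\A_X)$ exactly as $\set{r_C\colon C\subseteq[X]}$ rather than as unspecified linear combinations of generators of $I(\A)$, and avoids the monomial-order bookkeeping --- at the cost of leaning on the stratification $Y(\A_X)\cong Y_{[X]}(\A)$ from \cite{PS06}, which the paper's version of this step does not need. Both arguments turn on the same combinatorial kernel: a $3$-circuit sharing two elements with a flat (respectively, with another circuit) is absorbed by it. The only point worth making explicit is the passage from ``regular sequence in $S$'' to ``regular sequence in $S_X$''; your parenthetical codimension argument (or the observation that $S$ is free over $S_X$) does settle it.
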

\begin{proof}
First, $I(\A_X)=I_2(\A_X)$: if not, suppose $r_C$ is 
an irredundant generator of degree $d\geq3$.  By reordering the hyperplanes, we assume the circuit $C=[d+1]$.
Since $\OT(\A)$ is quadratic, then $r_C=\sum_{k=1}^mP_kr_{C_k}$, for some circuits $C_k$ of size $3$, and some homogeneous polynomials $P_k\in \K[y_1,\ldots,y_n]$.

Choose a monomial order such that $\lead(r_C)=y_1\cdots y_d$.  Then there exist $1\leq i,j\leq d$, $i\neq j$, such that $i,j\in C_k$, for some $1\leq k\leq m$.  For this $k$, $\abs{C_k\cap C}\geq 2$. Since $\abs{C_k}=3$, this means that $C_k\subset C$, a contradiction.

So $I(\A_X)$ is a quadratic ideal, generated by some $\K$-linear combinations of the quadratic generators of $I(\A)$.  A subset of a 
regular sequence is a regular sequence, so $\OT(\A_X)$ is also a 
quadratic complete intersection.
\end{proof}
A reformulation of Proposition~21 of \cite{SSV11} gives:
\begin{prop}\label{prop:lineclosed}
If $\OT(\A)$ is quadratic, then $\M(\A)$ is line-closed.
\end{prop}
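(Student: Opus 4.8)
If $\OT(\A)$ is quadratic, then $\M(\A)$ is line-closed.

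The plan is to reduce the statement to a rank-$3$ (or really a rank-$2$-inside-rank-$3$) computation, exploiting the fact that line-closure is a purely local condition: a matroid fails to be line-closed precisely when there is a line-closed set $S$ that is not a flat, and the smallest such witness can be taken to have rank $3$. So first I would reduce to the case $\ell = \rk(\A) = 3$: if $S$ is a minimal line-closed non-flat, then $S$ spans a rank-$3$ flat $X$ of $\M(\A)$ (it cannot span a flat of rank $\le 2$, since rank-$\le 2$ flats are automatically line-closed and minimal), and by Lemma~\ref{lem:qci subarr}-style reasoning the quadratic property passes to the subarrangement $\A_X$ — more precisely, the argument in the first paragraph of the proof of Lemma~\ref{lem:qci subarr} shows that if $\OT(\A)$ is quadratic then $I(\A_X)$ is also generated by quadrics for any flat $X$, i.e. $\OT(\A_X)$ is quadratic. (This sub-lemma is exactly what ``a reformulation of Proposition~21 of \cite{SSV11}'' should be packaging.) So it suffices to prove: a rank-$3$ arrangement $\A$ with $\OT(\A)$ quadratic is line-closed, equivalently, every line-closed proper subset of $[n]$ has rank $\le 2$.

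Next I would set up the contradiction concretely in rank $3$. Suppose $S\subsetneq[n]$ is line-closed but not a flat; then $S$ has rank $3$, so $S$ spans all of $V$, hence $\cl S = [n]$. Pick any $i\in[n]\setminus S$. Since $S$ has rank $3$, we can choose a basis $\{a,b,c\}\subseteq S$; line-closure of $S$ means that for any two elements of $S$, the whole rank-$2$ flat they determine lies in $S$. The key point is that $i\notin S$ means $i$ lies on no line $\cl{\{j,k\}}$ with $j,k\in S$; in matroid terms, for every $j\in S$, the rank-$2$ flat $\cl{\{i,j\}}$ meets $S$ only in $j$. I would then translate this into a statement about the circuits through $i$: every circuit $C$ with $i\in C$ and $|C\cap S|\ge 2$ must actually have rank $\ge 3$ (size $\ge 4$), because a $3$-element circuit $\{i,j,k\}$ with $j,k\in S$ would force $i\in\cl{\{j,k\}}\subseteq S$. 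On the other hand, since $\cl S = [n]$, the element $i$ does depend on $S$: there is a circuit $C$ with $i\in C\subseteq S\cup\{i\}$, necessarily of size $\ge 4$, giving an irredundant generator $r_C$ of $I(\A)$ of degree $\ge 3$. The task is to show this contradicts quadraticity.

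The main obstacle — and where the real content sits — is exactly this last step: showing $r_C$ with $|C|\ge 4$, $i\in C$, cannot be written as $\sum_k P_k r_{C_k}$ with $|C_k|=3$ when no $3$-element circuit meets $C$ in $\ge 2$ points involving $i$. The argument should mirror the monomial-order argument in Lemma~\ref{lem:qci subarr}: choose a monomial order with $\lead(r_C)$ the product of variables indexed by $C\setminus\{m\}$ for suitable $m$; if $r_C=\sum_k P_k r_{C_k}$ with all $|C_k|=3$, then some $r_{C_k}$ supplies a monomial sharing two variables with $\lead(r_C)$, forcing $|C_k\cap C|\ge 2$ and hence (since $|C_k|=3$) $C_k\subseteq C$, contradicting minimality/irredundancy of $r_C$ — so $I(\A)$ cannot be quadratic. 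I expect that making the monomial bookkeeping airtight, and correctly invoking the universal \std\ basis of Theorem~\ref{thm:PS_grobner} so that ``quadratic'' really does let one reduce any element of $I(\A)$ to a quadric combination, is the delicate part; everything else (the reduction to rank $3$, the translation between line-closure and circuit structure) is formal. Once the rank-$3$ case is in hand, the reduction in the first paragraph closes the proof, and one cites \cite[Prop.~21]{SSV11} for the packaging.
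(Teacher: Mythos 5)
The paper gives no argument for this statement: Proposition~\ref{prop:lineclosed} is obtained by citing \cite[Prop.~21]{SSV11} verbatim, so you are attempting strictly more than the authors do. Your setup is sound as far as it goes: if $S$ is line-closed but not a flat and $i\in\cl{S}-S$, there is a circuit $C$ with $i\in C\subseteq S\cup\set{i}$, and $\abs{C}\geq 4$ because a three-element circuit $\set{i,j,k}$ with $j,k\in S$ would force $i\in\cl{\set{j,k}}\subseteq S$. (Your preliminary reduction to rank $3$ is both unjustified --- a minimal line-closed non-flat need not have rank $3$ --- and unnecessary, since nothing in the construction of $C$ uses the rank.)

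The genuine gap is exactly where you place it, and the fix you propose does not close it. From $r_C=\sum_k P_k r_{C_k}$ you correctly extract some $C_k$ with $\abs{C_k\cap C}\geq 2$, but the inference ``$\abs{C_k\cap C}\geq2$ and $\abs{C_k}=3$, hence $C_k\subseteq C$'' is false: if $a,b\in C_k\cap C$, the third element of $C_k$ lies in $\cl{\set{a,b}}$ and need not lie in $C$. Worse, line-closure actively produces such circuits: for $a,b\in C-\set{i}\subseteq S$ the third point lies in $S$ but typically outside $C$, so there is no conflict with the minimality of $C$, and you have not shown $r_C\notin I_2(\A)$ --- which is the entire content of the proposition. (The same inference appears in the printed proof of Lemma~\ref{lem:qci subarr}, so mirroring that argument does not rescue you; you are also circular in first asserting that $r_C$ is an irredundant generator and then trying to derive a contradiction from its redundancy.) A correct self-contained argument goes through the variety rather than the monomials: by the stratification of \cite{PS06}, $Y(\A)\cap(\K^*)^{S}=\emptyset$ because $S$ is not a flat; on the other hand, line-closure forces every three-element circuit $C'$ to satisfy either $C'\subseteq S$ or $\abs{C'\cap S}\leq1$, so a point of the reciprocal plane of the subcollection $\set{f_j}_{j\in S}$ lying in the open torus $(\K^*)^{S}$, extended by zero to $\K^n$, annihilates every quadric $r_{C'}$ yet does not lie on $Y(\A)$. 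Hence the zero set of $I_2(\A)$ strictly contains $Y(\A)$, so $I_2(\A)\neq I(\A)$ and $\OT(\A)$ is not quadratic. This is essentially the argument behind \cite[Prop.~21]{SSV11}.
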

\begin{remark}
Falk showed in \cite{Fa01} that $\M(\A)$ is line-closed if the Orlik-Solomon algebra is quadratic, continuing our analogy.  Yuzvinsky
provided a counterexample to the converse: see \cite[Ex.~4.5]{DY01}.
It follows from \cite[Prop.~21]{SSV11} that, if $\M(\A)$ is line-closed, then $Y(\A)$ is cut out by degree-$2$ subideal $I_2(\A)$.  However, a Macaulay~2 \cite{M2} calculation using the same example from \cite{DY01} shows that the ideal generated by $I_2(\A)$ is not radical: as in the Orlik-Solomon case, then, the line-closed property does not imply quadraticity.
\end{remark}

\begin{lem}\label{lem:acyclic}
If $\OT(\A)$ is a q.c.i., then $G(\A)$ is a $3$-forest.
\end{lem}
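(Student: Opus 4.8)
The plan is to show that the existence of a cycle in $G(\A)$ forces $I(\A)$ to have minimal quadratic generators with a nontrivial relation among them, contradicting the complete intersection property. The key combinatorial input is that a cycle in the $3$-graph $G(\A)$ is a sequence of edges $e_1,\ldots,e_m,e_1$ with $m\geq2$, consecutive edges meeting in exactly one vertex (exactly one, by the $3$-graph hypothesis recorded just after Definition~\ref{def:GofA}), and these intersection vertices all distinct. Each $e_t$ corresponds to a triple point $X_t\in L_2(\A)$ with a quadratic relation $r_{e_t}\in I(\A)$. By Theorem~\ref{thm:qCI}, these $n-\ell$ quadrics $\set{r_e\colon e\in E(\A)}$ minimally generate $I(\A)$ and form a regular sequence; in particular any subset is a regular sequence and hence has no first syzygies of low degree.

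The main step is to produce an explicit linear syzygy among $r_{e_1},\ldots,r_{e_m}$. First I would reduce to the case where $\A$ is exactly the subarrangement $\A_X$ on the union of the vertices of the cycle, using Lemma~\ref{lem:qci subarr}: if $\OT(\A)$ is a q.c.i. then so is $\OT(\A_X)$, and a cycle of $G(\A)$ supported on $[X]$ is a cycle of $G(\A_X)$. So we may assume $G(\A)$ is a single cycle. Now for a single triangle ($m=1$ is excluded since cycles have at least two edges, but the relevant local picture is the same), on a rank-$2$ flat $\set{a,b,c}$ the relation is $r_{abc}=\alpha y_by_c+\beta y_ay_c+\gamma y_ay_b$ up to scalars, coming from $f_c$ being a linear combination of $f_a,f_b$. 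Around the cycle, write $e_t=\set{p_{t},x_t,p_{t+1}}$ where $p_{t}=e_{t-1}\cap e_t$ is the shared vertex (indices mod $m$) and $x_t$ is the ``private'' vertex of $e_t$. Each $r_{e_t}$ is a quadric supported on $\set{p_t,x_t,p_{t+1}}$. The idea is that multiplying $r_{e_t}$ by an appropriate single variable (a private variable $y_{x_s}$ for $s\neq t$, or products thereof) and summing telescopically around the cycle yields zero — this is the reciprocal-plane shadow of the fact that going around a cycle of linear dependencies $f_{x_t}\in\langle f_{p_t},f_{p_{t+1}}\rangle$ composes to the identity. Concretely, one expects a relation of the form $\sum_{t=1}^m (\pm)\,\big(\prod_{s\neq t,t-1}y_{x_s}\big)\, (\text{linear in }y_{p_t},y_{p_{t+1}})\cdot r_{e_t}=0$, i.e. a syzygy whose coefficients have degree $m-1$. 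A regular sequence of quadrics has its first syzygies generated in degree $2$ (the Koszul relations $r_{e_i}r_{e_j}-r_{e_j}r_{e_i}$), all of which involve at least two distinct $r_e$'s paired with quadratic coefficients; a cycle produces instead a syzygy that cannot be so generated because the $r_{e_t}$ on a cycle pairwise share at most one variable, so no Koszul syzygy among them has the right support. This support/degeneracy argument is the heart of the matter.

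**The main obstacle** will be verifying that the telescoping syzygy is genuinely nontrivial and genuinely incompatible with the Koszul syzygies of a regular sequence — that is, ruling out that the cyclic relation is a consequence of the trivial relations. I expect the cleanest route is not to fight with syzygy degrees directly but to argue numerically or via initial ideals: choose a monomial order (say lexicographic with the private variables $y_{x_t}$ largest), so that $\lead(r_{e_t})$ is the product of $e_t$'s two largest variables; then the leading terms around a cycle overlap in a way that makes $\In(I(\A))$ fail to be a complete intersection of monomials, because a set of squarefree quadratic monomials whose associated graph contains a cycle is never a regular sequence (a complete intersection monomial ideal generated by quadrics corresponds to a graph that is a disjoint union of edges). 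Since $\In(I(\A))=J_{\bc(\A)}$ by Theorem~\ref{thm:PS_grobner} when $I(\A)$ is quadratic, and since being a complete intersection is preserved under taking initial ideals, a cycle in $G(\A)$ would make $\bc(\A)$ fail the flag-complex-with-matching property required. So the concrete plan is: (i) reduce to $\A$ supported on the cycle via Lemma~\ref{lem:qci subarr}; (ii) pick a term order and show $\In(I(\A))$ contains a quadratic squarefree monomial ideal whose ``graph'' has a cycle; (iii) invoke that such a monomial ideal is not a complete intersection, hence neither is $I(\A)$, a contradiction. The delicate point in step (ii) is checking that the leading monomials of the $r_{e_t}$'s around the cycle really do form a cyclic graph and that no leading-term cancellation or extra generator spoils the picture — this is where the $3$-graph hypothesis (consecutive edges meet in exactly one vertex) is essential.
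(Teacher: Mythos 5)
Your plan is to derive the contradiction \emph{locally} from the quadrics $r_{e_1},\dots,r_{e_m}$ attached to a cycle, and this is where the proposal breaks down. First, the reduction ``we may assume $G(\A)$ is a single cycle'' via Lemma~\ref{lem:qci subarr} is not available: that lemma applies to $\A_X$ for a flat $X$, and the union of the vertices of a cycle need not be a flat, so its closure can bring in additional hyperplanes and edges. Second, and more seriously, the telescoping syzygy you expect need not exist: quadrics around a cycle can form a perfectly good regular sequence. Take six lines in $\P^2$ with exactly three triple points $\set{1,2,3}$, $\set{3,4,5}$, $\set{5,6,1}$ and all other intersections double. Here $G(\A)$ is a $3$-cycle with $\abs{E}=n-\ell=3$, yet the ideal $(r_{123},r_{345},r_{561})$ has codimension $3$, so these three quadrics are a regular sequence and admit only Koszul syzygies. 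What fails for this arrangement is not the regular sequence property of the cycle quadrics but quadraticity of $I(\A)$ (equivalently, $\pi(\A,t)\neq(1+2t)^3$): the obstruction carried by a cycle is global and numerical, not a local syzygy. Third, your fallback via initial ideals invokes the claim that the complete intersection property is preserved under passing to initial ideals; this is false --- only the converse holds (e.g.\ $(x^2-y^2,\,xy)$ is a complete intersection whose lexicographic initial ideal is $(x^2,xy,y^3)$) --- so exhibiting a cyclic graph of quadratic leading monomials inside $\In(I(\A))=J_{\bc(\A)}$ would not by itself contradict anything. (You would also still need to check that the leading monomial of $r_{ijk}$, which omits the \emph{least} index of the circuit, actually traces out the cycle.)

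The paper's argument is of an entirely different, purely enumerative nature, and you will likely want to adopt it. Using Proposition~\ref{prop:lineclosed} and Lemma~\ref{lem:qci subarr}, reduce to $\M(\A)$ irreducible, so that $G(\A)$ is connected. Since $\abs{E}=n-\ell$ by Theorem~\ref{thm:qCI}(2) and distinct edges of a $3$-graph share at most one vertex, the $2$-complex $\abs{G(\A)}$ deformation retracts to a graph and has first Betti number $b_1=1-n+3\abs{E}-\abs{E}=n-2\ell+1$. On the other hand, Crapo's theorem (nonvanishing of the beta invariant of an irreducible matroid) applied to the factorization $\pi(\A,t)=(1+t)^{2\ell-n}(1+2t)^{n-\ell}$ from Theorem~\ref{thm:qCI}(3) forces $2\ell-n=1$, hence $b_1=0$ and $G(\A)$ is a $3$-tree. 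In short: the complete intersection hypothesis is used through the two counts $\abs{E}=n-\ell$ and $2\ell-n=1$, not through a syzygy computation.
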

\begin{proof}
Using Lemma~\ref{lem:qci subarr}, it is enough to show that if
$\M(\A)$ is irreducible and $\OT(\A)$ is a q.c.i., then $G(\A)$ is a
$3$-tree, since by Proposition~\ref{prop:lineclosed}, the connected 
components of $\M(\A)$ are those of the $3$-graph $G(\A)$.

Let $\abs{G(\A)}$ denote the simplicial complex on $V$ with
$2$-simplices $E$.  Since each $1$-simplex in
$\abs{G(\A)}$ is contained in exactly one $2$-simplex, $\abs{G(\A)}$ retracts onto its edge graph, a $1$-complex.  For the same reason, $\abs{G(\A)}$ is connected and simply connected if and only if $G(\A)$ is a hypertree.  Computing the Euler characteristic gives
\begin{eqnarray*}
b_1&=& 1-\abs{E}+3\abs{E}-n,\\
&=& n-2\ell+1,
\end{eqnarray*}
since $\abs{E(\A)}=n-\ell$, 
where $b_i$ denotes the $i$th Betti number of $\abs{G(\A)}$.
On the other hand, since $\M(\A)$ is irreducible, a classical result
due to Crapo implies that $\pi'(\A,1)\neq 0$; it follows from Theorem~\ref{thm:qCI}(3) that $2\ell-n=1$, so $b_1=0$, as required.
\end{proof}

We conclude with a combinatorial characterization.
\begin{thm}\label{thm:qCI2}
The following are equivalent.
\begin{enumerate}
\item $\OT(\A)$ is a q.c.i.
\item $\M(\A)$ is line-closed, and $G(\A)$ is a $3$-forest.
\item $\A$ is supersolvable, and its exponents are each $1$ or $2$.
\item Connected components of $\M(\A)$ are iterated parallel connections of uniform matroids $U_{23}$.
\end{enumerate}
\end{thm}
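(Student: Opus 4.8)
The plan is to prove Theorem~\ref{thm:qCI2} by assembling the pieces already established, organizing the four conditions into a cycle of implications. The main structural observation is that Theorem~\ref{thm:qCI2} is essentially the ``reducible'' version of Theorem~\ref{thm:combinatorics}: for irreducible arrangements the equivalences $(2)\Leftrightarrow(3)\Leftrightarrow(4)$ are precisely what Theorem~\ref{thm:combinatorics} gives, and the content added here is that (i) the q.c.i.\ condition $(1)$ is equivalent to the combinatorial conditions, and (ii) everything glues correctly over connected components / irreducible factors.

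First I would handle $(1)\Rightarrow(2)$. By Proposition~\ref{prop:lineclosed}, quadraticity of $\OT(\A)$ already forces $\M(\A)$ to be line-closed, and Lemma~\ref{lem:acyclic} shows $G(\A)$ is a $3$-forest; so this implication is immediate. For $(2)\Rightarrow(3)$: decompose $\M(\A)$ into irreducible (equivalently, connected, by line-closedness and the fact that components of $\M(\A)$ are components of $G(\A)$) factors $\A=\A^{(1)}\times\cdots\times\A^{(r)}$. Each $G(\A^{(j)})$ is then a connected $3$-forest, i.e.\ a $3$-tree, and line-closedness is inherited by each factor; the number of edges of a $3$-tree on the vertex set of $\A^{(j)}$ is $\ell_j-1$ where $\ell_j=\rk\A^{(j)}$ (this is the Euler-characteristic count from Lemma~\ref{lem:acyclic}, run in reverse). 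So Theorem~\ref{thm:combinatorics}$(1)\Rightarrow(2)$ applies to each factor, giving that each $\A^{(j)}$ is supersolvable with exponents $\set{1,2,\ldots,2}$; a product of supersolvable arrangements is supersolvable, and the exponents of $\A$ are the union of those of the factors, all of which are $1$ or $2$. Similarly $(3)\Rightarrow(4)$ reduces to irreducible factors: a supersolvable arrangement with exponents in $\set{1,2}$ is a product of irreducible such arrangements, and Theorem~\ref{thm:combinatorics}$(2)\Rightarrow(3)$ identifies each irreducible factor as an iterated parallel connection of copies of $U_{2,3}$.

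The remaining implication $(4)\Rightarrow(1)$ is where I expect to do a little genuine work rather than pure bookkeeping. Here I would argue directly using Theorem~\ref{thm:qCI}: if the components of $\M(\A)$ are iterated parallel connections of $r$ copies of $U_{2,3}$ (across all components), then one computes $\pi(\A,t)$ multiplicatively. Parallel connection with $U_{2,3}$ corresponds, at the level of characteristic/Poincar\'e polynomials, to multiplication by $(1+2t)$ while sharing a common $(1+t)$ factor (this follows from the deletion-contraction behaviour of parallel connections, or directly from $\pi(U_{2,3},t)=(1+t)(1+2t)$ and the modularity of the glued point used in Theorem~\ref{thm:combinatorics}); iterating, a rank-$\ell$ component built from $k$ copies has $\pi = (1+t)(1+2t)^k$ with $\ell = k+1$. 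Taking the product over components gives $\pi(\A,t)=(1+t)^{2\ell-n}(1+2t)^{n-\ell}$, which is exactly Theorem~\ref{thm:qCI}(3). Since condition $(4)$ also forces $\M(\A)$ to be line-closed, hence $\OT(\A)$ quadratic would need checking---actually the cleaner route is: condition $(4)$ implies condition $(3)$ (already shown, or re-derived), and a supersolvable arrangement has $\OT(\A)$ Koszul, in particular quadratic, by Theorem~\ref{thm:koszul}; so $\OT(\A)$ is quadratic, and then Theorem~\ref{thm:qCI}$(3)\Rightarrow(1)$ upgrades this to a complete intersection. Thus $\OT(\A)$ is a q.c.i.

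The main obstacle is a bookkeeping one: making sure the reduction to irreducible factors is airtight at every stage---in particular that ``line-closed'' plus ``$G(\A)$ a $3$-graph'' really does force the matroid-theoretic connected components to coincide with the hypergraph components of $G(\A)$ (this is implicit in the proof of Lemma~\ref{lem:acyclic} and should be stated cleanly), and that the exponent multiset and the Poincar\'e polynomial both behave multiplicatively under products. Once that dictionary is in place, each of the four implications is either a one-line citation of an earlier result (Propositions~\ref{prop:lineclosed}, Lemmas~\ref{lem:acyclic}, Theorems~\ref{thm:combinatorics}, \ref{thm:qCI}, \ref{thm:koszul}) applied factor-by-factor, or the short multiplicative computation of $\pi(\A,t)$ described above. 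I would therefore present the proof as: establish the product reduction, then prove $(1)\Rightarrow(2)\Rightarrow(3)\Rightarrow(4)\Rightarrow(1)$, citing Theorem~\ref{thm:combinatorics} for the irreducible heart of $(2)\Leftrightarrow(3)\Leftrightarrow(4)$ and Theorem~\ref{thm:qCI} together with Theorem~\ref{thm:koszul} to close the loop through the q.c.i.\ condition.
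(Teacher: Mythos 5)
Your proof is correct and follows essentially the same route as the paper's: $(1)\Rightarrow(2)$ via Proposition~\ref{prop:lineclosed} and Lemma~\ref{lem:acyclic}, the equivalence of $(2)$, $(3)$, $(4)$ via Theorem~\ref{thm:combinatorics}, and the return to $(1)$ via quadraticity from Theorem~\ref{thm:koszul} combined with Theorem~\ref{thm:qCI}$(3)\Rightarrow(1)$. Your explicit attention to the reduction to irreducible components (and to the edge count $\abs{E}=\ell-1$ needed to invoke Theorem~\ref{thm:combinatorics}(1)) is a point the paper leaves implicit, but it is bookkeeping rather than a different argument.
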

\begin{proof}
The implication ``$(1)\Rightarrow(2)$'' is given by Lemma~\ref{lem:acyclic} and Proposition~\ref{prop:lineclosed}. 
The last three conditions are equivalent, by Theorem~\ref{thm:combinatorics}.  If $\A$ is supersolvable, $\OT(\A)$ is quadratic (Theorem~\ref{thm:koszul}), so it is enough to note that if $\A$ has no exponent greater than $2$, then $\pi(\A,t)$ has the form of Theorem~\ref{thm:qCI}(3).
\end{proof}
In particular, we see that the quadratic complete intersection property depends only on the matroid of $\A$.

\begin{cor} 
Suppose $\A$ is an arrangement for which $\OT(\A)$ is a complete intersection.  Then the implications of Corollary \ref{cor:implications} are all equivalences.
\end{cor}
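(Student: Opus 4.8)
The plan is to leverage the chain of implications already recorded in Corollary~\ref{cor:implications}, which holds for \emph{every} arrangement:
\[
\text{$\A$ supersolvable}\ \Longrightarrow\ \text{$\OT(\A)$ a $G$-algebra}\ \Longrightarrow\ \text{$\OT(\A)$ Koszul}\ \Longrightarrow\ \text{$\OT(\A)$ quadratic}\ \Longrightarrow\ \text{$\A$ $2$-formal}.
\]
To promote all four implications to equivalences under the hypothesis that $\OT(\A)$ is a complete intersection, it suffices by transitivity to prove the single reverse arrow ``$\A$ is $2$-formal $\Rightarrow$ $\A$ is supersolvable''; everything else follows formally.

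So assume $\A$ is $2$-formal and $\OT(\A)$ is a complete intersection. First I would invoke \cite[Theorem~2.4]{st} to restate $2$-formality as $\codim I_2(\A)=\codim I(\A)=n-\ell$, where $I_2(\A)$ is the subideal of $I(\A)$ generated in degree $2$. Since $I(\A)$ is generated by the relations $r_c$, which have degree $\ge 2$ (Theorem~\ref{thm:PS_grobner}), it vanishes in degrees $\le 1$; hence $I_2(\A)$ is minimally generated by a $\K$-basis of $I(\A)_2$, and Krull's height theorem forces $\dim_\K I(\A)_2\ge n-\ell$. On the other hand, because $\OT(\A)$ is a complete intersection, $I(\A)$ is minimally generated by exactly $n-\ell=\codim I(\A)$ homogeneous elements, of which --- again because $I(\A)$ vanishes in degrees $\le 1$ --- precisely $\dim_\K I(\A)_2$ have degree $2$; thus $\dim_\K I(\A)_2\le n-\ell$. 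Comparing the two bounds forces $\dim_\K I(\A)_2=n-\ell$ and leaves no minimal generator of $I(\A)$ in degree $\ge 3$, so $I(\A)=I_2(\A)$ is generated by quadrics. Therefore $\OT(\A)$ is a quadratic complete intersection, and the equivalence of conditions $(1)$ and $(3)$ in Theorem~\ref{thm:qCI2} yields that $\A$ is supersolvable, closing the cycle.

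The only delicate point is the degree bookkeeping: one must use that, since $I(\A)$ begins in degree $2$, the number of degree-$2$ elements in any minimal homogeneous generating set equals $\dim_\K I(\A)_2$ --- applied both to $I_2(\A)$ and to the complete intersection $I(\A)$ --- together with the convention that ``complete intersection'' here means that the minimal number of generators of $I(\A)$ equals its codimension $n-\ell$. No separate argument is needed for the intermediate implications (Koszul $\Rightarrow$ $G$-algebra, and so on), since each already holds in one direction by Corollary~\ref{cor:implications} and the loop is closed through supersolvability. (The hypothesis that $\A$ be $2$-formal is genuinely used: a generic arrangement $\A_{\ell+1,\ell}$ with $\ell\ge3$ has $\OT(\A)$ a complete intersection but is neither quadratic nor supersolvable.)
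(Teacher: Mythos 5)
Your proposal is correct and follows essentially the same route as the paper: reduce everything to the single implication ``$2$-formal $\Rightarrow$ quadratic'' under the complete intersection hypothesis, using $\codim I_2(\A)=\codim I(\A)=n-\ell$ from \cite[Theorem~2.4]{st}, and then close the loop via the quadratic complete intersection characterization of Theorem~\ref{thm:qCI2}. The only cosmetic difference is that you obtain the lower bound $\dim_\K I(\A)_2\ge n-\ell$ from Krull's height theorem, whereas the paper observes that the degree-$2$ members of the minimal regular sequence generating $I(\A)$ already form a regular sequence generating $I_2(\A)$ and hence must be the whole sequence; both are the same bookkeeping.
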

\begin{proof} 
If $\OT(\A)$ is a q.c.i., then it is supersolvable, so it remains only to show that if $\OT(\A)$ is a complete intersection and $\A$ is $2$-formal, then $\OT(\A)$ is quadratic.
Suppose $I(\A)$ is a complete intersection
with minimal generating set $\mathcal M$, and let $I_2(\A)$ be the subideal of $I(\A)$ generated by degree-$2$ elements.  By hypothesis, $\mathcal M$ forms a regular sequence.  Then $\mathcal M$ contains a minimal generating set $\mathcal N$ for $I_2(\A)$, since $I(\A)$ contains no elements of degree $<2$.

By \cite[Theorem~2.4]{st}, $2$-formality implies $\codim(I_2(\A))=\codim{I(\A)}=n-\ell$.   Since $\mathcal N$ must also
be a regular sequence, we have ${\mathcal N}={\mathcal M}$, and
$I(\A)$ is quadratic.
\end{proof}

%\bibliographystyle{amsalpha}
%\bibliography{aim}

\begin{thebibliography}{JMM06}

\bibitem[Ani86]{An86}
David~J. Anick, \emph{On the homology of associative algebras}, Trans. Amer.
  Math. Soc. \textbf{296} (1986), no.~2, 641--659. \MR{846601 (87i:16046)}

\bibitem[BF85]{BF85}
J{\"o}rgen Backelin and Ralf Fr{\"o}berg, \emph{Koszul algebras, {V}eronese
  subrings and rings with linear resolutions}, Rev. Roumaine Math. Pures Appl.
  \textbf{30} (1985), no.~2, 85--97. \MR{789425 (87c:16002)}

\bibitem[Ber10]{berget}
Andrew Berget, \emph{Products of linear forms and {T}utte polynomials},
  European J. Combin. \textbf{31} (2010), no.~7, 1924--1935. \MR{2673030
  (2011j:05062)}

\bibitem[Bj{\"o}92]{Bj94}
Anders Bj{\"o}rner, \emph{The homology and shellability of matroids and
  geometric lattices}, Matroid applications, Encyclopedia Math. Appl., vol.~40,
  Cambridge Univ. Press, Cambridge, 1992, pp.~226--283. \MR{1165544
  (94a:52030)}

\bibitem[Bry75]{Bry75}
Tom Brylawski, \emph{Modular constructions for combinatorial geometries},
  Trans. Amer. Math. Soc. \textbf{203} (1975), 1--44. \MR{0357163 (50 \#9631)}

\bibitem[Bry77]{Bry77}
\bysame, \emph{The broken-circuit complex}, Trans. Amer. Math. Soc.
  \textbf{234} (1977), no.~2, 417--433. \MR{468931 (80a:05055)}

\bibitem[BO81]{BO81}
Tom Brylawski and James Oxley, \emph{The broken-circuit complex: its structure
  and factorizations}, European J. Combin. \textbf{2} (1981), no.~2, 107--121.
  \MR{622075 (82i:05025)}

\bibitem[BZ91]{BZ91}
Anders Bj{\"o}rner and G{\"u}nter~M. Ziegler, \emph{Broken circuit complexes:
  factorizations and generalizations}, J. Combin. Theory Ser. B \textbf{51}
  (1991), no.~1, 96--126. \MR{1088629 (92b:52027)}

\bibitem[CE99]{CEbook}
Henri Cartan and Samuel Eilenberg, \emph{Homological algebra}, Princeton
  Landmarks in Mathematics, Princeton University Press, Princeton, NJ, 1999,
  With an appendix by David A. Buchsbaum, Reprint of the 1956 original.
  \MR{1731415 (2000h:18022)}

\bibitem[DS06]{DS06}
Graham Denham and Alexander~I. Suciu, \emph{On the homotopy {L}ie algebra of an
  arrangement}, Michigan Math. J. \textbf{54} (2006), no.~2, 319--340.
  \MR{2252762 (2007f:17039)}

\bibitem[DS12]{DS12}
Graham~{Denham} and Alexander~I. {Suciu}, \emph{{Multinets, parallel connections, and
  Milnor fibrations of arrangements}}, \href{http://www.arxiv.org/abs/1209.3414}{arXiv:1209.3414}, 2012.

\bibitem[DY02]{DY01}
Graham Denham and Sergey Yuzvinsky, \emph{Annihilators of {O}rlik-{S}olomon
  relations}, Adv. in Appl. Math. \textbf{28} (2002), no.~2, 231--249.
  \MR{1888846 (2003b:05046)}

\bibitem[Eis95]{eisen}
David Eisenbud, \emph{Commutative algebra}, Graduate Texts in Mathematics, vol.
  150, Springer-Verlag, New York, 1995, With a view toward algebraic geometry.
  \MR{1322960 (97a:13001)}

\bibitem[Fal02]{Fa01}
Michael Falk, \emph{Line-closed matroids, quadratic algebras, and formal
  arrangements}, Adv. in Appl. Math. \textbf{28} (2002), no.~2, 250--271.
  \MR{1888847 (2003a:05040)}

\bibitem[FP02]{FP02}
Michael~J. Falk and Nicholas~J. Proudfoot, \emph{Parallel connections and
  bundles of arrangements}, Topology Appl. \textbf{118} (2002), no.~1-2,
  65--83, Arrangements in Boston: a Conference on Hyperplane Arrangements
  (1999). \MR{1877716 (2002k:52033)}

\bibitem[GS]{M2}
Daniel~Grayson and Michael~Stillman, \emph{Macaulay2---a software system for algebraic
  geometry and commutative algebra}, available at \href{http://www.math.uiuc.edu/Macaulay2}{http://www.math.uiuc.edu/Macaulay2}.

\bibitem[HT03]{HT03}
Hiroki Horiuchi and Hiroaki Terao, \emph{The {P}oincar\'e series of the algebra
  of rational functions which are regular outside hyperplanes}, J. Algebra
  \textbf{266} (2003), no.~1, 169--179. \MR{1994536 (2004k:13031)}

\bibitem[HK11]{HK11}
June~{Huh} and Eric~{Katz}, \emph{{Log-concavity of characteristic polynomials and
  the Bergman fan of matroids}}, \href{http://www.arxiv.org/abs/1104.2519}{arXiv:1104.2519}, 2011.

\bibitem[JP98]{JP98}
Michel Jambu and Stefan Papadima, \emph{A generalization of fiber-type
  arrangements and a new deformation method}, Topology \textbf{37} (1998),
  no.~6, 1135--1164. \MR{1632975 (99g:52019)}

\bibitem[JP02]{JP02}
Michel Jambu and {\c{S}}tefan Papadima, \emph{Deformations of hypersolvable
  arrangements}, Topology Appl. \textbf{118} (2002), no.~1-2, 103--111,
  Arrangements in Boston: a Conference on Hyperplane Arrangements (1999).
  \MR{1877718 (2003a:32047)}

\bibitem[JMM06]{JMM06}
Craig Jensen, Jon McCammond, and John Meier, \emph{The integral cohomology of
  the group of loops}, Geom. Topol. \textbf{10} (2006), 759--784. \MR{2240905
  (2007c:20121)}

\bibitem[Loo03]{Loo03}
Eduard Looijenga, \emph{Compactifications defined by arrangements. {I}. {T}he
  ball quotient case}, Duke Math. J. \textbf{118} (2003), no.~1, 151--187.
  \MR{1978885 (2004i:14042a)}

\bibitem[OT92]{OTbook}
Peter Orlik and Hiroaki Terao, \emph{Arrangements of hyperplanes}, Grundlehren
  der Mathematischen Wissenschaften [Fundamental Principles of Mathematical
  Sciences], vol. 300, Springer-Verlag, Berlin, 1992. \MR{MR1217488
  (94e:52014)}

\bibitem[OT94]{ot94}
\bysame, \emph{Commutative algebras for arrangements}, Nagoya Math. J.
  \textbf{134} (1994), 65--73. \MR{1280653 (95j:52025)}

\bibitem[Oxl11]{Oxbook}
James Oxley, \emph{Matroid theory}, second ed., Oxford Graduate Texts in
  Mathematics, vol.~21, Oxford University Press, Oxford, 2011. \MR{2849819}

\bibitem[Par00]{Par00}
Luis Paris, \emph{Intersection subgroups of complex hyperplane arrangements},
  Topology Appl. \textbf{105} (2000), no.~3, 319--343. \MR{1769026
  (2002g:52031)}

\bibitem[PP05]{QAbook}
Alexander Polishchuk and Leonid Positselski, \emph{Quadratic algebras},
  University Lecture Series, vol.~37, American Mathematical Society,
  Providence, RI, 2005. \MR{2177131 (2006f:16043)}

\bibitem[PS06]{PS06}
Nicholas Proudfoot and David Speyer, \emph{A broken circuit ring}, Beitr\"age
  Algebra Geom. \textbf{47} (2006), no.~1, 161--166. \MR{2246531 (2007c:13029)}

\bibitem[SSV11]{SSV11}
Raman~{Sanyal}, Bernd~{Sturmfels}, and Cynthia~{Vinzant}, \emph{{The entropic
  discriminant}}, \href{http://www.arxiv.org/abs/1108.2925}{arXiv:1108.2925}, 2011.

\bibitem[Sch11]{Sch11}
Hal Schenck, \emph{Resonance varieties via blowups of {$\Bbb P^2$} and
  scrolls}, Internat. Math. Res. Notices (2011), no.~20, 4756--4778. \MR{2844937
  (2012k:14075)}

\bibitem[ST09]{st}
Hal Schenck and {\c{S}}tefan~O. Toh{\v{a}}neanu, \emph{The {O}rlik-{T}erao
  algebra and 2-formality}, Math. Res. Lett. \textbf{16} (2009), no.~1,
  171--182. \MR{2480571 (2010h:13023)}

\bibitem[SY97]{ShYuz}
Brad Shelton and Sergey Yuzvinsky, \emph{Koszul algebras from graphs and
  hyperplane arrangements}, J. London Math. Soc. (2) \textbf{56} (1997), no.~3,
  477--490. \MR{1610447 (99c:16044)}

\bibitem[Sta72]{stan71}
Richard~P. Stanley, \emph{Modular elements of geometric lattices}, Algebra
  Universalis \textbf{1} (1971/72), 214--217. \MR{0295976 (45 \#5037)}

\bibitem[Ter86]{Ter86}
Hiroaki Terao, \emph{Modular elements of lattices and topological fibration},
  Adv. in Math. \textbf{62} (1986), no.~2, 135--154. \MR{865835 (88b:32032)}

\bibitem[Ter02]{tera02}
\bysame, \emph{Algebras generated by reciprocals of linear forms}, J. Algebra
  \textbf{250} (2002), no.~2, 549--558. \MR{1899865 (2003c:16052)}

\end{thebibliography}
%\end{document}

\providecommand{\bysame}{\leavevmode\hbox to3em{\hrulefill}\thinspace}
\providecommand{\MR}{\relax\ifhmode\unskip\space\fi MR }
% \MRhref is called by the amsart/book/proc definition of \MR.
\providecommand{\MRhref}[2]{%
  \href{http://www.ams.org/mathscinet-getitem?mr=#1}{#2}
}
\providecommand{\href}[2]{#2}

\end{document}